\theoremstyle{theorem}
\newtheorem{Th}{Theorem}[section]
\crefname{Th}{Theorem}{Theorems}
\newaliascnt{Prop}{Th}
\newtheorem{Prop}[Prop]{Proposition}
\crefname{Prop}{Proposition}{Propositions}
\newaliascnt{Lem}{Th}
\newtheorem{Lem}[Lem]{Lemma}
\crefname{Lem}{Lemma}{Lemmas}
\newaliascnt{Cor}{Th}
\crefname{Cor}{Corollary}{Corollaries}
\theoremstyle{definition}
\newaliascnt{Def}{Th}
\crefname{Def}{Definition}{Definitions}
\theoremstyle{remark}
\newaliascnt{Rem}{Th}
\newtheorem{Rem}[Rem]{Remark}
\crefname{Rem}{Remark}{Remarks}
\newaliascnt{Ex}{Th}
\crefname{Ex}{Example}{Examples}
\newaliascnt{Ass}{Th}
\crefname{Ex}{Assumption}{Assumptions}
\newaliascnt{Que}{Th}
\crefname{Que}{Question}{Questions}
\crefname{subsection}{Subsection}{Subsections}
\newcommand{\wt}{\widetilde}
\newcommand{\R}{\mathbb{R}}
\newcommand{\Z}{\mathbb{Z}}
\newcommand{\N}{\mathbb{N}}
\newcommand{\cC}{{\mathcal C}}
\newcommand{\cH}{{\mathcal H}}
\newcommand{\cM}{{\mathcal M}}
\newcommand{\cO}{{\mathcal O}}
\newcommand{\Ga}{\Gamma}
\newcommand{\weakto}{\rightharpoonup}
\renewcommand{\div}{\mathrm{div}\,}
\newcommand{\tu}{\widetilde{u}}
\numberwithin{equation}{section}
\newcommand{\supp}{\mathrm{supp}\,}
\begin{document}

\title{Ground state solutions to the nonlinear Born-Infeld problem}

\author[B. Bieganowski]{Bartosz Bieganowski}
\address[B. Bieganowski]{\newline\indent
			Faculty of Mathematics, Informatics and Mechanics, \newline\indent
			University of Warsaw, \newline\indent
			ul. Banacha 2, 02-097 Warsaw, Poland}	
			\email{\href{mailto:bartoszb@mimuw.edu.pl}{bartoszb@mimuw.edu.pl}}	

\author[N. Ikoma]{Norihisa Ikoma}
\address[N. Ikoma]{\newline\indent  	
			Department of Mathematics,		\newline\indent 
			Faculty of Science and Technology, \newline\indent 
			Keio University, \newline\indent
			Yagami Campus: 3-14-1 Hiyoshi, Kohoku-ku, Yokohama, Kanagawa 2238522, Japan}
			\email{\href{mailto:ikoma@math.keio.ac.jp}{ikoma@math.keio.ac.jp}}
			
\author[J. Mederski]{Jarosław Mederski}
\address[J. Mederski]{\newline\indent  	
			Institute of Mathematics,		\newline\indent 
			Polish Academy of Sciences, \newline\indent 
			ul. \'Sniadeckich 8, 00-656 Warsaw, Poland
		\newline\indent
		and
		\newline\indent 
		Faculty of Mathematics and Computer Science,		\newline\indent 
		Nicolas Copernicus University, \newline\indent ul. Chopina 12/18,		 87-100 Toruń, Poland}
			\email{\href{mailto:jmederski@impan.pl}{jmederski@impan.pl}}

\date{}	\date{\today}

\begin{abstract} 
	In the paper we show the existence of ground state solutions to the nonlinear Born-Infeld problem
\[
\div \left( \frac{\nabla u}{\sqrt{1-|\nabla u|^2}} \right) + f(u) = 0, \quad x \in \R^N
\]
in the zero and positive mass cases. Moreover, we find a new proof of the Sobolev-type inequality
\[
\int_{\R^N} \left(1 - \sqrt{1-|\nabla u|^2}\right) \, dx \geq  C_{N,p} \left( \int_{\R^N} |u|^p \, dx \right)^{\frac{N}{N+p}},
\]
for $p > 2^*$ as well as the characterization of the optimal constant $C_{N,p}$ in terms of the ground state energy level. 
Previous approaches relied on approximation schemes and/or symmetry assumptions, 
which typically yield to compact embeddings and may lead to solutions that are not at the ground state energy level. 
In contrast, neither approximation arguments nor symmetry assumptions are employed in the paper to obtain a ground state solution. 
Instead, we develop a new direct variational approach based on minimization over a Pohožaev manifold combined with profile decomposition techniques. 
Finally, we show that nonradial solutions exist whenever 
$N \geq 4$; in particular, this settles a previously open problem in the case $N=5$.

\medskip

\noindent \textbf{Keywords:} Born-Infeld equation, infinite energy problem, ground states, profile decompositions, nonradial solutions.
   
\noindent \textbf{AMS Subject Classification:} 35A15, 35J25, 35J93, 35Q75 
\end{abstract} 

\maketitle

\pagestyle{myheadings} \markboth{\underline{B. Bieganowski, N. Ikoma, J. Mederski}}{
		\underline{Ground state solutions to the nonlinear Born-Infeld problem}}

\section{Introduction}

%


%

In this paper, we consider the nonlinear problem
\begin{equation}\label{eq:BI}
\div \left( \frac{\nabla u}{\sqrt{1-|\nabla u|^2}} \right) + f(u) = 0, \quad x \in \R^N.
\end{equation}
The problem \eqref{eq:BI} appears when one considers the electromagnetic theory introduced by Born and Infeld \cite{Born1, Born2, Born3, Born4} as a nonlinear alternative to the Maxwell theory. The new electromagnetic theory solves the well-known \textit{infinite energy problem}. Namely, if we consider the electromagnetic field
generated by a point charge in $\R^3$, and the corresponding scalar field equation in the Maxwell theory
\[
\Delta \phi + \delta_0 = 0, \quad x \in \R^3,
\]
then the \textit{fundamental solution}
\[
\phi(x) = \frac{C}{|x|}
\]
where $C > 0$, has an infinite energy 
\[
E_M(\phi) = \int_{\R^3} |\nabla \phi(x)|^2 \, dx = +\infty.
\]
In the Born-Infeld theory the scalar field equation is governed by the operator $\div \left( \frac{\nabla \phi}{\sqrt{1-|\nabla \phi|^2}} \right)$. Then the fundamental solution $\phi(x) = \phi(|x|)$ to
\[
\div \left( \frac{\nabla \phi}{\sqrt{1-|\nabla \phi|^2}} \right) + \delta_0 = 0, \quad x \in \R^3,
\]
is given by the derivative of its radial profile with some $b>0$
\[
\phi'(r) = \frac{b^2}{\sqrt{r^4 + b^2}}
\]
and its energy
\[
E_{BI} (\phi)=  \int_{\R^3} \left(1 - \sqrt{1-|\nabla \phi(x)|^2} \right) \, dx = 4 \pi \int_0^\infty \left(1 - \frac{r^2}{\sqrt{r^4+b^2}} \right) r^2 \, dr < \infty.
\]

We remark that \eqref{eq:BI} has variational structure. In fact, the energy functional associated to \eqref{eq:BI} is given by
\begin{equation}\label{eq:I}
	I(u) := \int_{\R^N} \left(1 - \sqrt{1-|\nabla u|^2}\right) \, dx - \int_{\R^N} F(u) \, dx = \Psi(u) - \Phi(u),
\end{equation}
where we set
\[
F(u) := \int_0^u f(s) \, ds, \quad 
\Psi(u) := \int_{\R^N} \left(1 - \sqrt{1-|\nabla u|^2}\right) \, dx, \quad \Phi(u) := \int_{\R^N} F(u) \, dx.
\]
Formally, if $\| \nabla u \|_\infty = \| \nabla u \|_{L^\infty(\R^N)} < 1$, then 
\[
\lim_{h \to 0} \frac{I(u+t \varphi) - I(u)}{h} = \int_{\R^N} \frac{\nabla u \cdot \nabla \varphi}{\sqrt{ 1 - | \nabla u|^2 }} - f(u) \varphi \, dx
\]
and $u$ is a weak solution of \eqref{eq:BI} provided $u$ is a critical point of $I$ with $\|\nabla u \|_{\infty} < 1$. 
However, it should be pointed out that for the well-definedness of $\Psi$, the condition $|\nabla u| \leq 1$ for a.e. $x \in \R^N$ is required and 
it is not possible to find a function space on which $\Psi$ becomes of class $\cC^1$. 
Therefore, we need to go beyond the smooth critical point theory to treat $I$ directly on some function space, 
and the nonsmoothness of $\Psi$ makes the problem complicated.

The problem
\[
\div \left( \frac{\nabla u}{\sqrt{1-|\nabla \phi|^2}} \right) + \rho = 0
\]
with fixed $\rho$ has been studied by many authors; see, for example,
\cite{BCF, BdAP, BdAPR, BI1, BI2, BIMM24, H, K1, K2}.
On the other hand, the literature for \eqref{eq:BI} is fragmentary. 
In \cite{BoDeCoDe12}, the power nonlinearities $f(u) = |u|^{p-2}u$ with $p>2^*$ 
are studied and the existence of radial positive classical solution and infinitely many radial solutions is proved 
through approximations of the differential operator. Furthermore, the following Sobolev type inequality is 
established and the existence of optimizers is studied: 
\begin{equation}\label{Sob-type-ineq}
	\int_{\R^N} \left( 1 - \sqrt{1 - |\nabla u|^2} \right) dx \geq C_{N,p} \left( \int_{\R^N} |u|^p \, dx \right)^{ \frac{N}{N+p} }.
\end{equation}
In \cite{Azz14}, \eqref{eq:BI} is studied for $N \geq 2$ and $f(u) = - u + g(u)$ by the shooting method, 
and the existence of radial positive decreasing solution is found under some assumptions on $g$. 
On the other hand, \cite{Azz16} treats the case $N \geq 3$ and $f(u) = |u|^{p-2} u$ with $ p > 2$ and $p \neq 2^*$ where $2^* := 2N/(N-2)$. 
It is proved that for $p \in (2,2^*)$, \eqref{eq:BI} has no radial solution decaying at infinity, and 
for $p> 2^*$, \eqref{eq:BI} has infinitely many radial solutions with infinite energy, that is $\| \nabla u \|_2 = \infty$. 
More general nonlinearities are considered in \cite{MP} for the zero and the positive mass case, 
and the existence of at least one radial solutions is established through the mountain pass theorem and a similar approximation in \cite{BoDeCoDe12}. 
Finally, \cite{BIMM} develops the monotonicity trick due to \cite{St88,JeTo98,Je99} for nonsmooth functionals in the setting of \cite{Sz86} and 
the existence of infinitely many radial solutions of \eqref{eq:BI} is proved. 
In addition, the existence of nonradial solutions is also shown when $N = 4$ or $N \geq 6$. 
We also refer to \cite{BdAP} for the regularity result of radial local minimizers 
and \cite{BMP} for the $L^2$-normalized solutions. 
It is worth mentioning that in all these works, radial symmetry (or some symmetry of functions) plays a crucial role 
to overcome the lack of compactness. 
Due to the usage of the symmetry, 
the existence of nonradial solutions to \eqref{eq:BI} when $N=5$ is missing in \cite{BIMM}.

In this paper, we aim to develop a concentration compactness argument or profile decompositions 
for the nonsmooth functional corresponding to \eqref{eq:BI} and this allows us to discuss 
the existence of a ground state solution to \eqref{eq:BI}, i.e., 
a solution with the least possible energy, which seems to be the most relevant from the physical point of view. 
Moreover, the argument can be applied for the existence of nonradial solutions when $N = 5$. 
For this purpose, we introduce a new direct variational approach based
on minimization over a Poho\v{z}aev set, as we shall see later.
Since we study \eqref{eq:BI} in the zero and the positive mass case as in \cite{MP,BIMM}, 
two sets of assumptions are provided. After the existence of ground state solutions, 
nonradial solutions are studied. 
We start our considerations with the \textit{zero mass case} in the spirit of Berestycki and Lions \cite{BeLi83-1}.

\subsection{The zero mass case}

In this subsection, the results for the zero mass case are stated. 
To explain our setting, we first introduce a function space we work on in this case. 
The following choice is taken from \cite{BIMM24,BIMM}. See \cite{BdAP} for another choice of function space.

Let $N \geq 3$ and choose $j_0 \in \N$ such that $\max \{ 2^*, N \} < 2j_0$. Then we introduce the function space
\[
X_0 := \left\{ u \in L^{2^*}(\R^N) \ : \ \|u\|_{X_0} := \| \nabla u \|_2 + \|\nabla u\|_{2 j_0} < \infty \right\}.
\]
Thanks to $N < 2j_0$, it is easily seen from Sobolev's and the interpolation inequality that 
\begin{equation}\label{def:C0}
X_0 \subset W^{1,2^*} (\R^N) \cap \cC_0(\R^N), \quad \cC_0(\R^N) := \left\{ u \in \cC(\R^N) \ : \ \lim_{|x| \to \infty} u(x) = 0 \right\}. 
\end{equation}
Note that if $u \in X_0$ and $\| \nabla u \|_\infty \leq 1$, then the inequality $1 - \sqrt{1 - t^2 } \leq t^2$ in $[0,1]$ gives 
\[
\Psi(u) = \int_{\R^N} \left(1 - \sqrt{1-|\nabla u|^2}\right) \, dx \leq \int_{\R^\N} \left| \nabla u \right|^2 dx < \infty. 
\]
This observation allows us to define the \textit{domain} of $\Psi$ as
\[
D(\Psi) := \left\{ u \in X_0 \ : \ \|\nabla u\|_\infty \leq 1 \right\}.
\]
Then $\Psi : X_0 \rightarrow \R \cup \{\infty\}$ is a convex functional with a convex domain $D(\Psi)$.

Next, to treat $\Phi$ in \eqref{eq:I} on $X_0$, we introduce the following assumptions on $f$: 
\begin{enumerate}[label=(F\arabic{*}),ref=F\arabic{*}]\setcounter{enumi}{0}
\item \label{F1} $f \in \cC^1(\R)$ and $f(0)=0$;
\item \label{F2} $\displaystyle \limsup_{s \to 0} \frac{f'(s)}{|s|^{2^*-2}} = 0$;
\item \label{F3} $sH'(s) > 0$ for all $s \neq 0$ where $H(s) := F(s) + \frac{1}{N} f(s)s$;
\item \label{F4} $\displaystyle \lim_{|s|\to\infty} H(s) = \infty$;
\item \label{F5} $s \mapsto f(s)s$ is nonincreasing in $(-\infty, 0)$ and nondecreasing in $(0,\infty)$;
\item \label{F6} $|F(s)| \lesssim |f(s)s|$ for $s \in \R$, that is, there exists a $C \geq 0$ such that $|F(s)| \leq C |f(s)s|$ for every $s \in \R$. 
\end{enumerate}

\begin{Rem}
	\begin{enumerate}[label=(\roman*)]
		\item 
		Under \eqref{F1}, condition \eqref{F5} is equivalent to $ 0 \leq f'(s) s^2 + f(s)s$ for every $s \in \R$. 
		Thus, a direct computation shows that \eqref{F1} and \eqref{F5} with an additional condition $sf(s) > 0$ for each $s \neq 0$ imply \eqref{F3}. 
		\item 
		Condition \eqref{F6} cannot be derived from \eqref{F1}--\eqref{F5} and such an example of $f$ is given as follows. 
		Let $k_0 \in \N$ satisfy $2^* < 2k_0$ and consider 
		$F(s) := \log ( 1 + s^{ 2k_0 }) \in \cC^\infty(\R)$. Since $f(s) = 2k_0 s^{2k_0-1} / (1 + s^{2k_0})$, 
		it is immediate to verify \eqref{F1}--\eqref{F5}, however, \eqref{F6} is not satisfied.

	\end{enumerate}

\end{Rem}

By \eqref{def:C0}, \eqref{F1} and \eqref{F2}, we may verify $\Phi \in \cC^1(X_0;\R)$ and 
it allows us to define the notion of a critical point of $I$ as in \cite{Sz86}. Namely, $u \in X_0$ is said to be a \textit{critical point of $I$} if
\begin{equation}\label{def-cri}
	\Psi(v)-\Psi(u) - \Phi'(u)(v-u) \geq 0 \quad \text{for all $v \in X_0$}, 
\end{equation}
that is, $\Phi'(u) \in \partial \Psi(u)$, where $\partial \Psi(u)$ denotes the \textit{subdifferential} of a convex functional $\Psi$.

To discuss the existence of ground state solutions of \eqref{eq:BI}, 
we will introduce the Poho\v{z}aev set $\cM_0$ for \eqref{eq:BI}. Let
\begin{equation}\label{eq-def-funcM0}
	M_0(u) := \Psi(u) - \int_{\R^N} H(u) \, dx = \Psi(u) - \Phi(u) - \frac{1}{N} \Phi'(u)(u)
\end{equation}
and define $\cM_0$ by 
\begin{equation}\label{eq-def-setM0}
	\cM_0 := \left\{ u \in X_0 \setminus \{0\} \ : \ M_0(u) = 0 \right\}.
\end{equation}
Since for $u \in X_0 \setminus D(\Psi)$ 
$\Psi(u) = \infty$ and $M_0(u) = \infty$ hold, we easily get $\cM_0 \subset D(\Psi) \setminus \{0\}$. 
For the semilinear scalar field problems, it is known that each reasonable solution satisfies the Poho\v{z}aev identity (for instance, see \cite[Proposition 1]{BeLi83-1}), 
and hence any reasonable solution belongs to the Poho\v{z}aev set. 
A counterpart for \eqref{eq:BI} of this fact is obtained in \cite[Proposition 2.6]{BIMM} (see also \cref{l:RegCritP}). 
Therefore, we are led to the following minimizing problem: 
\[
c_0 := \inf_{\cM_0} I. 
\]
Remark that if there exists a minimizer $u$ corresponding to $c_0$ and if $u$ is a nontrivial solution of \eqref{eq:BI}, 
then $u$ has the least energy among all nontrivial solutions of \eqref{eq:BI} in $X_0$, and hence $u$ is a ground state solution of \eqref{eq:BI}.

\begin{Th}\label{th:1}
Let $N \geq 3$ and assume  \eqref{F1}--\eqref{F6}. Then the following assertions hold: 
\begin{enumerate}[label={\rm (\roman*)}]
	\item 
	$0<c_0 = c_{0,MP}$ holds where $c_{0,MP}$ is a mountain pass level given by \eqref{eq:MP};
	
	\item 
	every minimizing sequence $(u_n)_{n \in \N}$ for $c_0$ is relatively compact in $X_0$ up to translations,  
	and if $\| u_n - u_0 \|_{X_0} \to 0$, then $u_0 \in \cM_0$ attains $c_0$ and is a ground state solution of \eqref{eq:BI}, 
	namely $u_0$ is a classical solution of \eqref{eq:BI} with $I(u_0) = c_0$;
	
	\item 
	for each ground state solution $u$ of \eqref{eq:BI}, either $u>0$ in $\R^N$ or $u<0$ in $\R^N$ holds, and 
	there exists a ground state solution of \eqref{eq:BI} which is radially symmetric. 
\end{enumerate}
\end{Th}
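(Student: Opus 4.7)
The plan is to prove the three parts in sequence, with the compactness argument in (ii) being the main technical core. For (i), fix $u\in\cM_0$, so that $M_0(u)=0$ reads $\Psi(u)=\int_{\R^N}H(u)\,dx$. Using \eqref{F1}--\eqref{F2} for small $s$ and \eqref{F5}--\eqref{F6} for large $s$, one obtains a growth bound of the form $|H(s)|\le\varepsilon|s|^{2^*}+C_\varepsilon|s|^p$ for some $p>2^*$. Combining this with the Sobolev embedding (using the pointwise inequality $1-\sqrt{1-t^2}\ge\tfrac{1}{2}t^2$ on $[0,1]$ to get $\|u\|_{2^*}^{2^*}\le C\Psi(u)^{2^*/2}$) and with the Sobolev--type inequality \eqref{Sob-type-ineq}, proved independently in the paper, to get $\|u\|_p^p\le C\Psi(u)^{(N+p)/N}$, yields $\Psi(u)\ge c>0$ because both exponents exceed $1$. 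On $\cM_0$ one has $I(u)=\tfrac{1}{N}\int_{\R^N}f(u)u\,dx$, and \eqref{F5}--\eqref{F6} give $H(s)\le (C+1/N)f(s)s$, so $I(u)\ge c'>0$, proving $c_0>0$. For the equality $c_0=c_{0,MP}$, the standard two--sided scaling argument applies: for $u\in\cM_0$ the family $u_t(x)=u(x/t)$ extended appropriately towards $t\to 0^+$ gives a mountain pass path with $\max_t I(u_t)=I(u_1)=I(u)$ by a derivative check using $M_0(u)=0$, whence $c_{0,MP}\le c_0$; conversely every mountain pass path intersects $\cM_0$ since $M_0$ is positive in a neighbourhood of $0$ in $D(\Psi)$ and negative at the path's endpoint.

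For (ii), let $(u_n)\subset\cM_0$ be a minimizing sequence for $c_0$. Boundedness in $X_0$ follows from $\Psi(u_n)\le C I(u_n)\to Cc_0$ (via \eqref{F6}) together with $\|\nabla u_n\|_\infty\le 1$, which yield $\|\nabla u_n\|_2$ and $\|\nabla u_n\|_{2j_0}$ bounded. I would then rule out Lions' vanishing: if $\sup_{y\in\R^N}\int_{B_1(y)}|u_n|^q\,dx\to 0$ for some $q\in(2^*,p)$, interpolation with bounded $L^{2^*}$ and $L^p$ norms (the latter controlled by \eqref{Sob-type-ineq}) forces $\int H(u_n)\,dx\to 0$, so $\Psi(u_n)\to 0$, contradicting the lower bound from (i). Hence there exist translations $y_n$ with $\wt u_n:=u_n(\cdot+y_n)\rightharpoonup u_0\ne 0$ in $X_0$ and a.e. The heart of the argument is a Brezis--Lieb type decomposition for the nonsmooth convex functional $\Psi$, which requires establishing a.e.\ convergence of $\nabla\wt u_n$ (using convexity of $\Psi$, the subdifferential description \eqref{def-cri}, and compactness of the nonlinear flux $\nabla u/\sqrt{1-|\nabla u|^2}$). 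Together with the standard $L^p$ Brezis--Lieb splitting for $\Phi$ and $\int f(\wt u_n)\wt u_n\,dx$, this produces $I(\wt u_n)=I(u_0)+I(\wt u_n-u_0)+o(1)$ and $M_0(\wt u_n)=M_0(u_0)+M_0(\wt u_n-u_0)+o(1)$. If $\wt u_n-u_0\not\to 0$ in $X_0$, rescaling both $u_0$ and $\wt u_n-u_0$ back onto $\cM_0$ and using the uniform positive lower bound $c_0>0$ on each nontrivial piece contradicts $I(\wt u_n)\to c_0$. Hence $\wt u_n\to u_0$ strongly, $u_0$ attains $c_0$ on $\cM_0$ and is a critical point in the sense of \eqref{def-cri}, and \cref{l:RegCritP} upgrades it to a classical solution of \eqref{eq:BI}.

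For (iii), given a ground state $u_0$, one has $\Phi(|u_0|)=\Phi(u_0)$ and $\int f(|u_0|)|u_0|\,dx=\int f(u_0)u_0\,dx$ (since $F$ and $s\mapsto sf(s)$ depend only on $|s|$ up to an even extension compatible with \eqref{F5} and $f(0)=0$), while $\Psi(|u_0|)\le\Psi(u_0)$ because $|\nabla|u_0||\le|\nabla u_0|$ a.e.\ and $G(\xi)=1-\sqrt{1-|\xi|^2}$ is monotone in $|\xi|$; after rescaling back onto $\cM_0$, $|u_0|$ is still a ground state. A strong maximum principle for the nonlinear flux operator in \eqref{eq:BI} (applicable once the local $\cC^1$ regularity from \cref{l:RegCritP} is in force) then gives $|u_0|>0$ strictly, hence $u_0>0$ or $u_0<0$ in $\R^N$. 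For the existence of a radial ground state, use Schwarz symmetrization: $\Psi(u_0^*)\le\Psi(u_0)$ by P\'olya--Szeg\H{o} (the integrand $G(|\xi|)$ is convex and increasing in $|\xi|$), equimeasurability preserves $\Phi$ and $\int f(u)u\,dx$, and rescaling back onto $\cM_0$ yields a radial minimizer realising $c_0$.

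The main obstacle is implementing the Brezis--Lieb splitting for $\Psi$ in (ii): near the boundary $\{|\nabla u|=1\}$ the integrand $G$ is not differentiable and weak convergence of gradients does not automatically pass to pointwise convergence, so one must combine the subdifferential calculus implicit in \eqref{def-cri} with compactness of the nonlinear flux to upgrade the mode of convergence before the convex splitting can be invoked. Once this decomposition, together with the strict subadditivity of $c_0$ that it produces, is available, the rest of the argument follows the familiar concentration--compactness pattern adapted to the anisotropic space $X_0$.
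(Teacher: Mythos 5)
Your overall architecture (bounded minimizing sequence, exclusion of vanishing, splitting along translated profiles, projection back to $\cM_0$, regularity via \cref{l:RegCritP}) is in the right spirit, but the step you yourself identify as the heart of the argument — a Brezis--Lieb splitting $I(\wt u_n)=I(u_0)+I(\wt u_n-u_0)+o(1)$, $M_0(\wt u_n)=M_0(u_0)+M_0(\wt u_n-u_0)+o(1)$ for the nonsmooth part — is not justified and, as set up, cannot work. A minimizing sequence on $\cM_0$ carries no Euler--Lagrange information, so neither the subdifferential relation \eqref{def-cri} nor any ``compactness of the nonlinear flux'' is available to upgrade weak convergence of $\nabla\wt u_n$ to a.e.\ convergence; moreover the remainder $\wt u_n-u_0$ need not lie in $D(\Psi)$ at all (its gradient can have modulus up to $2$), so $\Psi(\wt u_n-u_0)$, $I(\wt u_n-u_0)$ and $M_0(\wt u_n-u_0)$ may be $+\infty$ and the remainder cannot be projected onto $\cM_0$, which breaks the dichotomy step you rely on. The paper deliberately avoids any splitting of $\Psi$: in \cref{Prop:decomp} it only proves the superadditive inequality $\lim_n\Psi(u_n)\geq\sum_j\Psi(\wt u_j)$ (weak lower semicontinuity on disjoint balls), splits exactly only the smooth terms $\int F(u_n)$ and $\int f(u_n)u_n$, and then closes the argument by comparing $\int H(\wt u_j)$ with $\int H(\theta_j\wt u_j)$ via \eqref{eq-diff-H} and \eqref{F5} (as in \eqref{Ineq:Psi}--\eqref{Ineq:energy}), finishing with \cref{l:strconv}. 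A second genuine gap: you assert that the minimizer ``is a critical point in the sense of \eqref{def-cri}'' with no argument. Since $M_0$ is not $\cC^1$, there is no Lagrange multiplier rule to invoke; this passage is exactly \cref{lem:minimizers-are-critical}, whose proof (perturbation $v_t=(1-t)u_0+tv$, second-order control of $\theta\mapsto I(u_\theta)$, and the exclusion of $|\theta_n-1|/t_n\to\infty$ using \eqref{F3} and \eqref{F5}) is one of the main novelties of the paper and cannot be skipped.

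Two further points. In (iii) you replace $u_0$ by $|u_0|$ and claim $\Phi(|u_0|)=\Phi(u_0)$ and $\int f(|u_0|)|u_0|\,dx=\int f(u_0)u_0\,dx$; this presupposes evenness of $F$ and of $s\mapsto f(s)s$, i.e.\ oddness of $f$, which is \emph{not} assumed in \cref{th:1} (see \cref{Rem:ground}); the paper instead decomposes $u=u_++u_-$, shows via the projection/energy comparison that one part vanishes, and only then applies the maximum principle, while the radial minimizer is produced by rearranging the (already signed) minimizer. Finally, in (i) your direct lower bound for $\Psi$ on $\cM_0$ uses a growth estimate $|H(s)|\leq\varepsilon|s|^{2^*}+C_\varepsilon|s|^p$ for all $s$, which does not follow from \eqref{F1}--\eqref{F6} (no growth restriction at infinity is imposed, and elements of $\cM_0$ have no uniform $L^\infty$ bound); positivity should instead be obtained as in \cref{Lem:c0-c0MP}, from $c_0\geq c_{0,MP}$ and the small-norm estimate giving $c_{0,MP}>0$. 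The same caveat applies to your vanishing argument, which is legitimate only after boundedness in $X_0$ is established, so that the constants may depend on $\sup_n\|u_n\|_{X_0}$ as in \cref{lem:lions}.
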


\begin{Rem}\label{Rem:ground}
	By \cref{th:1} (iii), the ground state energy of \eqref{eq:BI} is equal to 
	the \emph{radial} ground state energy, that is, the least energy among all \emph{radial} nontrivial solutions of \eqref{eq:BI} in $X_0$. 
	However, a priori, it is not clear whether the ground state energy coincides with the radial one 
	since we do not impose on $f$ the oddness. 
\end{Rem}

When $f(u)=|u|^{p-2} u$ with $p > 2^*$, 
as in the seminlinear case (see \cite{MedJDE}), the existence of minimizers corresponding to $c_0$ 
implies the Sobolev type inequality \eqref{Sob-type-ineq} obtained in \cite{BoDeCoDe12}, 
the expression of the best constant with $c_0$ 
and the characterization of every optimizer with the ground state solutions of \eqref{eq:BI}:

\begin{Th}\label{th:2}
Let $N \geq 3$ and $p > 2^*$. Then for any $u \in X_0$ there holds
\begin{equation}\label{ineq:Sobtyp}
\int_{\R^N} \left(1 - \sqrt{1-|\nabla u|^2}\right) \, dx \geq \left( \frac{N+p}{Np} \right) N^{\frac{p}{N+p}} \left( \inf_{\cM_0} I \right)^{\frac{p}{N+p}} \left( \int_{\R^N} |u|^p \, dx \right)^{\frac{N}{N+p}}.
\end{equation}
Moreover, every optimizer $v$ has a form of $v(x) = \theta u(x/\theta)$, 
where $\theta>0$ and $u$ is a ground state solution of \eqref{eq:BI} with $f(u) = |u|^{p-2}u$.
\end{Th}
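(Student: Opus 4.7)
The plan is to derive \eqref{ineq:Sobtyp} as a direct consequence of the variational characterization $c_0 = \inf_{\cM_0} I$ from \cref{th:1}, combined with the explicit structure of the Pohožaev manifold when $f(s)=|s|^{p-2}s$. First I dispose of the trivial cases: if $u = 0$ or $u \in X_0 \setminus D(\Psi)$, then either the right-hand side of \eqref{ineq:Sobtyp} vanishes or $\Psi(u) = +\infty$, so the inequality is immediate. Hence I assume $u \in D(\Psi)\setminus\{0\}$. For $f(s) = |s|^{p-2}s$ a direct computation gives $H(s) = \frac{N+p}{Np}|s|^p$, so
\[
\cM_0 = \left\{ u \in X_0\setminus\{0\} \ : \ \Psi(u) = \frac{N+p}{Np}\int_{\R^N}|u|^p\,dx \right\},
\]
and on $\cM_0$ the functional collapses to $I(u) = \frac{1}{N}\int_{\R^N}|u|^p\,dx$; in particular $c_0 = \frac{1}{N}\inf_{u\in\cM_0}\int_{\R^N}|u|^p\,dx$.

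The core step is to introduce the two-parameter scaling $v_\theta(x) := \theta\, u(x/\theta)$, $\theta>0$, which satisfies $(\nabla v_\theta)(x) = (\nabla u)(x/\theta)$. Because $|\nabla v_\theta|$ equals $|\nabla u|$ composed with $x/\theta$, this scaling maps $D(\Psi)$ into itself, and both relevant quantities become homogeneous: $\Psi(v_\theta) = \theta^N \Psi(u)$ and $\int_{\R^N}|v_\theta|^p\,dx = \theta^{N+p}\int_{\R^N}|u|^p\,dx$. The equation $M_0(v_\theta)=0$ then has the unique positive solution
\[
\theta_0 = \left(\frac{Np\,\Psi(u)}{(N+p)\int_{\R^N}|u|^p\,dx}\right)^{1/p},
\]
so that $v_{\theta_0} \in \cM_0$ and therefore
\[
c_0 \leq I(v_{\theta_0}) = \frac{1}{N}\theta_0^{N+p}\int_{\R^N}|u|^p\,dx.
\]
Substituting the expression for $\theta_0$ and rearranging produces \eqref{ineq:Sobtyp} with exactly the constant stated.

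For the characterization of optimizers, suppose $v \in X_0\setminus\{0\}$ attains equality in \eqref{ineq:Sobtyp}. Then $v \in D(\Psi)$ and the inequality $c_0 \leq I(v_{\theta_0})$ used above must be an equality, so $v_{\theta_0}$ minimizes $I$ over $\cM_0$; by \cref{th:1}(ii) it is a ground state solution of \eqref{eq:BI}. Writing $u := v_{\theta_0}$ and $\theta := 1/\theta_0$ and inverting the scaling gives $v(x) = \theta u(x/\theta)$. Conversely, every such rescaling of a ground state is an optimizer, since the ratio $\Psi(u) / (\int_{\R^N}|u|^p\,dx)^{N/(N+p)}$ is invariant under $u\mapsto \theta u(\cdot/\theta)$. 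Existence of at least one optimizer is guaranteed by applying \cref{th:1} to $f(s) = |s|^{p-2}s$, whose hypotheses \eqref{F1}--\eqref{F6} are routinely verified whenever $p > 2^*$.

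I don't anticipate any serious obstacle: the argument is purely algebraic once the two variational facts from \cref{th:1} (the characterization $c_0 = \inf_{\cM_0} I$ and the existence of a minimizer) are in hand. The only subtle point, easy to overlook, is that $\Psi$ is not homogeneous under the ordinary dilation $u\mapsto u(\cdot/\theta)$, which forces the combined scaling $\theta u(\cdot/\theta)$; the fortunate side effect is that this particular scaling preserves the pointwise constraint $|\nabla v_\theta|\leq 1$ for free, which is exactly what allows one to remain inside $D(\Psi)$ without any further truncation or approximation argument.
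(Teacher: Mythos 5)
Your proposal is correct and follows essentially the same route as the paper: for $u\in D(\Psi)\setminus\{0\}$ one projects onto $\cM_0$ via the scaling $\theta u(\cdot/\theta)$, uses the explicit formula $\theta=\bigl(\tfrac{Np\,\Psi(u)}{(N+p)\|u\|_p^p}\bigr)^{1/p}$ coming from $H(s)=\tfrac{N+p}{Np}|s|^p$, bounds $c_0\leq I(u_\theta)=\tfrac{1}{N}\theta^{N+p}\|u\|_p^p$ and rearranges, with equality precisely when the projection is a minimizer on $\cM_0$, hence a ground state by \cref{th:1}. Your extra remarks (trivial cases $u=0$ or $u\notin D(\Psi)$, the converse scale-invariance, and verification of \eqref{F1}--\eqref{F6} for the pure power) are consistent with, and slightly more explicit than, the paper's argument.
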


\begin{Rem}
	\begin{enumerate}
		\item 
		A difference between \cite[Theorem 1.2]{BoDeCoDe12} and \cref{th:2} 
		is in the characterization of optimizer through ground state solutions of \eqref{eq:BI} with $f(u) = |u|^{p-2}u$.
		In fact, in \cite[Theorem 1.2]{BoDeCoDe12} it is shown that the constant is achieved by a radial solution of \eqref{eq:BI}. 
		
		\item 
		When $p=2^*$, we may also obtain a similar inequality to \eqref{ineq:Sobtyp} with the best constant involving $\inf_{\cM_0} I$. 
		However, in this case, the constant is never achieved as shown in \cite{BoDeCoDe12}. 
		
	\end{enumerate}
\end{Rem}

\subsection{Positive mass case}

In this case, let $N \geq 1$ 
and suppose that the nonlinearity $f$ is of the form $f(s) = -s + g(s)$, where $g \in \cC^1 (\R)$ satisfies the following conditions:
\begin{enumerate}[label=(G\arabic{*}),ref=G\arabic{*}]\setcounter{enumi}{0}
\item \label{G1} $g'(0) = 0 = g(0)$;
\item \label{G2} the map $\displaystyle s \mapsto \frac{g(s)}{|s|}$ is strictly increasing on $\R$;
\item \label{G3} $\displaystyle \lim_{|s|\to\infty} \frac{g(s)}{s} = \infty$.
\end{enumerate}
In this case, choose $j_0$ such that $2 j_0 > N$ and introduce the space
\[
X_1 := \{ u \in H^1 (\R^N) \ : \ \nabla u \in L^{2j_0} (\R^N) \}
\]
with the norm
\[
\|u\|_{X_1} := \|u\|_{H^1} + \|\nabla u\|_{2j_0}.
\]
Then, the variational functional $I$ is given by \eqref{eq:I}, $\Psi$ is defined as before with the natural domain
\[
D(\Psi) := \left\{ u \in X_1 \ : \ \|\nabla u\|_\infty \leq 1 \right\}
\]
and 
\[
\Phi(u) :=  \int_{\R^N} - \frac12 u^2 + G(u) \, dx, \quad G(s) := \int_0^s g(t) \, dt.
\]
Remark that $\Psi$ is a convex functional on a convex domain $D(\Psi)$ and $\Phi \in \cC^1 (X_1; \R)$. 
Therefore, the notation of critical points of $I$ on $X_1$ is defined as in \eqref{def-cri} by changing $X_0$ to $X_1$. 
We also introduce the Poho\v{z}aev set as
\begin{equation*}\label{eq-def-M1}
\cM_1 := \left\{ u \in X_1 \setminus \{0\} \ : \ M_1(u) = 0 \right\} \subset D(\Psi) \setminus \{0\}, \quad M_1(u) := \Psi(u) - \Phi(u) - \frac{1}{N} \Phi'(u)(u)
\end{equation*}
and consider the following minimizing problem: 
\[
c_1 := \inf_{\cM_1} I.
\]
Then $c_1$ gives the ground state energy as follows: 

\begin{Th}\label{th:3}
Suppose $N \geq 1$, $f(s) = -s +g(s)$ and \eqref{G1}--\eqref{G3}. Then 
\begin{enumerate}[label={\rm (\roman *)}]
	\item 
	$0<c_1 = c_{1,MP}$ holds where $c_{1,MP}$ is a mountain pass level; 
	
	\item 
	every minimizing sequence $(u_n)_{n \in \N}$ for $c_0$ is relatively compact in $X_1$ up to translations, 
	and if $\| u_n - u_0 \|_{X_1} \to 0$, then $u_1 \in \cM_1$ attains $c_1$ and is a ground state solution of \eqref{eq:BI}, 
	namely $u_0$ is a classical solution of \eqref{eq:BI} with $I(u_0) = c_1$;
	
	\item 
	for each ground state solution $u$ of \eqref{eq:BI}, either $u>0$ in $\R^N$ or $u<0$ in $\R^N$ holds, 
	and there exists a ground state solution of \eqref{eq:BI} which is radially symmetric. 
	\end{enumerate}
\end{Th}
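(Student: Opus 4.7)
The plan is to transport the entire strategy used for \cref{th:1} to the positive-mass functional framework $X_1$, where the linear term $-\tfrac{1}{2}\|u\|_2^2$ in $\Phi$ tames the behavior at infinity. First I would check that $\cM_1$ is a nonempty ``Poho\v{z}aev manifold.'' By \eqref{G1}--\eqref{G3} one can produce a test function $u \in D(\Psi)$ with $\Phi(u) > 0$ (take $u$ equal to a large constant on a large ball, then truncated so that $\|\nabla u\|_\infty \le 1$); the map $t \mapsto M_1(u(\cdot/t))$ is continuous and changes sign, yielding a point in $\cM_1$. Positivity $c_1 > 0$ follows since on $\cM_1$ one has $I(u) = \tfrac{1}{N}\Psi(u)$, and $\Psi$ is bounded away from zero on $\cM_1$ by combining $M_1(u)=0$ with the subcritical growth of $g$ near $0$ provided by \eqref{G1}--\eqref{G2}.

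For the mountain-pass equality $c_1 = c_{1,MP}$ I would use the scaling $\gamma_u(t) := u(\cdot/t)$ for $u \in \cM_1$: along this path $I$ achieves its maximum precisely at $t=1$, giving $c_{1,MP} \leq I(u) = c_1$. The reverse inequality follows because any admissible path from $0$ to a point with $I<0$ must cross $\cM_1$; this intersection property is proved by showing that $M_1$ is positive near the origin in $X_1$ and negative at the endpoint, again by scaling analysis.

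The main technical step, and the one I expect to be the hardest, is compactness of minimizing sequences. Given $(u_n) \subset \cM_1$ with $I(u_n) \to c_1$, the identity $I(u_n) = \tfrac{1}{N}\Psi(u_n)$ controls $\|\nabla u_n\|_2$, the constraint $\|\nabla u_n\|_\infty \le 1$ controls $\|\nabla u_n\|_{2j_0}$ via interpolation, and the $L^2$-norm is controlled through $M_1(u_n)=0$ together with \eqref{G3}. I would then invoke a profile decomposition for $X_1$ analogous to the one developed for $X_0$ in the zero-mass case, writing
\[
u_n = \sum_{k} w_k(\cdot - y_n^k) + r_n
\]
with pairwise diverging translations and vanishing remainder. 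The nonsmooth convex $\Psi$ must be split into contributions of the profiles plus a nonnegative remainder via its convexity and weak lower semicontinuity; each profile $w_k$ is a candidate critical point with energy at least $c_1$, so strict subadditivity along the decomposition combined with $\sum_k I(w_k) \leq c_1$ forces exactly one nontrivial profile and zero remainder, giving convergence in $X_1$ up to translation. The pointwise constraint $|\nabla u|\leq 1$ survives in the limit because the $L^\infty$-ball is weak$^*$-closed.

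Once the limit $u_0 \in \cM_1$ is secured, I would show it solves \eqref{eq:BI} weakly in the sense of \eqref{def-cri} by a Lagrange-multiplier argument against $M_1=0$; multipliers are excluded because $M_1'(u_0)(u_0)<0$ on $\cM_1$, which follows from \eqref{G2}. The Poho\v{z}aev characterization of critical points (the analogue of \cref{l:RegCritP}) confirms that $u_0$ is a genuine ground state, and the regularity theory available for the Born-Infeld operator upgrades it to a classical solution. For (iii), strict sign is obtained by comparing $u_0$ with $|u_0|$: the inequality $\Psi(|u_0|) \le \Psi(u_0)$ together with the strict monotonicity in \eqref{G2} shows that a sign-changing minimizer can be strictly lowered in energy by a signed rearrangement of its positive and negative parts, a contradiction. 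Radial symmetry follows from the Schwarz rearrangement, which leaves $\Phi$ invariant on nonnegative functions and does not increase $\Psi$, so the rearrangement of a signed ground state is again a ground state that is radially symmetric.
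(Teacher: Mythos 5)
The central gap is the step where you pass from ``minimizer on $\cM_1$'' to ``critical point of $I$'' via a Lagrange multiplier rule. That rule is simply not available here: $\Psi$ --- and hence $I$ and $M_1$ --- is not a $\cC^1$ functional on $X_1$ (it is not even finite outside $D(\Psi)$, and no choice of function space makes it smooth), so the derivative ``$M_1'(u_0)$'' does not exist and the multiplier exclusion ``$M_1'(u_0)(u_0)<0$'' has no meaning. This is precisely the delicate point of the positive-mass proof: the paper replaces the multiplier argument by a direct one (\cref{lem:minimizers-are-critical,lem:positive-mass-minimizers-crit}), testing minimality along convex perturbations $v_t=(1-t)u_0+tv$, re-projecting onto $\cM_1$ through the $\cC^1$ map $\Theta$ of \cref{Lem-Theta-pos}, proving $|\theta(v_{t_n})-1|=O(t_n)$ outside a bad regime, and excluding that regime by the monotonicity coming from \eqref{G2} via \cref{lem:nehari-type-inequality}. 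Relatedly, in the compactness step you assert that ``each profile $w_k$ is a candidate critical point with energy at least $c_1$''; for a minimizing (not Palais--Smale) sequence this is unjustified. The paper instead shows some nontrivial profile satisfies $M_1(\wt u_{j_0})\le 0$, hence its projection parameter satisfies $\theta_{j_0}\le 1$, and compares energies through the identity $I(u)=\frac{2}{N+2}\{\Psi(u)+\int_{\R^N}(\frac12 g(u)u-G(u))\,dx\}$ on $\cM_1$, the monotonicity \eqref{eq-mono-int}, weak lower semicontinuity of $\Psi$ and Fatou's lemma.

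Several auxiliary claims are also incorrect as stated. On $\cM_1$ one has $I(u)=\frac1N\Phi'(u)(u)$, not $I(u)=\frac1N\Psi(u)$, so both your positivity argument and your gradient bound rest on a false identity (the correct bound on $\Psi(u_n)$ comes from $NI(u_n)=\Phi'(u_n)u_n\ge 2(\Psi(u_n)-I(u_n))$). The fibering attached to $M_1$ is $\theta\mapsto\theta u(\cdot/\theta)$, not the pure dilation $u(\cdot/t)$: along your path the $t$-derivative at $t=1$ is not a multiple of $M_1(u)$ (an extra term $\int_{\R^N}|\nabla u|^2/\sqrt{1-|\nabla u|^2}\,dx$ appears), so the maximum need not be at $t=1$ and the endpoint need not have negative energy, since $\Phi$ may be negative on $\cM_1$. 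Boundedness of minimizing sequences for $N=1,2$ is not covered by your sketch (no $L^{2^*}$ control exists); the paper must first prove $g(s)s-2G(s)\to\infty$ from \eqref{G2}--\eqref{G3} and combine it with the $1$-Lipschitz bound on $D(\Psi)$ to obtain $L^\infty$ and level-set estimates before controlling $\|u_n\|_2$. Finally, your sign argument compares $u_0$ with $|u_0|$ and implicitly requires $G$ to be even, which is not assumed in \cref{th:3}; the paper instead uses $M_1(u)=M_1(u_+)+M_1(u_-)$, re-projects the part with nonpositive $M_1$, and gains strictly because $\Psi$ of the discarded part is positive, after which the strong maximum principle yields the strict sign.
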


\subsection{Nonradial solutions}

Now we show that \eqref{eq:BI} also admits nonradial solutions provided $N \geq 4$. 
Let $X$ denote the space $X_0$ in the {\em zero-mass case}, or $X_1$ in the {\em positive-mass case}. Similarly, let $\cM$ denote the corresponding Poho\v{z}aev set, $\cM_0$ or $\cM_1$, respectively. 
We treat both cases simultaneously and assume either \eqref{F1}--\eqref{F6} or \eqref{G1}--\eqref{G3}, depending on which case is considered.

The approach here is taken from \cite{BartschWillem,BiSi21,JeLu20,Med20,MedJDE,Wi96}. 
Let $k_1,k_2 \in \N$ satisfy $k_1,k_2 \geq 2$ and $N-k_1-k_2 \geq 0$ 
and write $x=(x_1,x_2,x_3) \in \R^{k_1} \times \R^{k_2} \times \R^{N-k_1-k_2}$ 
(if $N=k_1+k_2$, then we write $x=(x_1,x_2) \in \R^{k_1} \times \R^{k_2}$). 
Define 
\[
\begin{aligned}
	\cO &:= \cO(k_1) \times \cO(k_2) \times \{ \mathrm{id} \} \subset \cO(N),
	\\
	X_{\cO} &:= \left\{ u \in X \ : \ u(\rho x) = u(x) \quad \text{for any $x \in \R^N$ and $\rho \in \cO$} \right\}.
\end{aligned}
\]
For each $u \in X_\cO$, we identity $u(x_1,x_2,x_3)$ with $u(r_1,r_2,x_3)$ where $r_1 = |x_1|$ and $r_2 = |x_2|$. 
Then we define $\tau$ by 
\[
\tau(r_1,r_2,x_3) := (r_2,r_1,x_3), \quad 
(\tau u) (x_1,x_2,x_3) := - u( \tau( |x_1|,|x_2|,x_3 ) ) = -u( |x_2|,|x_1|,x_3 ). 
\]
Remark that $\tau : X_\cO\to X_\cO$ is an isometry. We also set 
\[
X_\tau := \left\{ u \in X_\cO \ : \ \tau u = u  \right\}. 
\]
Clearly, if $u \in X_\tau$ is radial, i.e. $u(x) = u(\rho x)$ for any $\rho \in \cO(N)$, then $u \equiv 0$. 
Hence $X_\tau$ does not contain any nontrivial radial functions.

\begin{Th}\label{th:4} 
	Let $N \geq 4$ and fix $k_1,k_2 \in \N$ so that $k_1,k_2 \geq 2$ and $N-k_1-k_2 \geq 0$. 
	Suppose \eqref{F1}--\eqref{F6} (resp. \eqref{G1}--\eqref{G3}) and that $f$ (reps. $g$) is odd. 
	Then there is a nontrivial critical point $u_0\in \cM\cap X_\tau =:\cM_\tau$ such that 
	$I(u_0) = \inf_{\cM_\tau} I \geq \inf_{\cM} I > 0$. Moreover, when $k_1=k_2$, then 
	\begin{equation*}\label{eq:thmain4}
	I(u_0) = \inf_{\cM_\tau }I > 2\inf_{\cM} I > 0.
	\end{equation*}
In particular $u_0$ is a nonradial classical solution of \eqref{eq:BI}. 
\end{Th}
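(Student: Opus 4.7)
The plan is to work on the closed $I$-invariant subspace $X_\tau \subset X$, using crucially that the oddness of $f$ (resp.\ $g$) makes $F$ (resp.\ $G$) even, so that $\Psi(\tau u) = \Psi(u)$ and $\Phi(\tau u) = \Phi(u)$, hence $I(\tau u) = I(u)$. A version of Palais' symmetric criticality principle, adapted to the nonsmooth setting encoded by the subdifferential definition \eqref{def-cri} of a critical point, then guarantees that any critical point of $I|_{X_\tau}$ is in fact a critical point of $I$ on $X$.

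To show that $\inf_{\cM_\tau} I$ is attained I would take a minimizing sequence $(u_n)_{n \in \N} \subset \cM_\tau$, which is bounded in $X$ by the a priori bounds developed for \cref{th:1,th:3}, and then invoke the profile decomposition built in the paper, now restricted to $X_\tau$. Because each $u_n$ is invariant under $\cO(k_1)\times \cO(k_2)$ with $k_1,k_2 \geq 2$, any translation vector with nontrivial components in the first two blocks would produce, under the group action, infinitely many asymptotically disjoint copies of the extracted profile, and hence infinite total mass; so all translations must lie along $x_3 \in \R^{N-k_1-k_2}$ (and equal $0$ when $N=k_1+k_2$). Such translations commute with the $\tau$-action on $X_\cO$, so each nonzero profile $\phi^j$ inherits both the $\cO$- and $\tau$-symmetries and belongs to $X_\tau$. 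Projecting every $\phi^j$ onto $\cM$ via the Pohožaev scaling places it in $\cM_\tau$ with $I \geq \inf_{\cM_\tau} I$, and the Brézis--Lieb-type additivity of $I$ and $M$ across the decomposition then forces at most one nonzero profile. Combined with the fact that the total energy does not vanish (as $\inf_{\cM_\tau} I > 0$ because $\cM_\tau \subset \cM$ and $\inf_\cM I > 0$ by \cref{th:1,th:3}), this yields a single profile, and, after the corresponding $x_3$-translation, $u_n \to u_0$ in $X$ with $u_0 \in \cM_\tau$ realizing the infimum. Nonradiality is automatic because $X_\tau$ contains no nontrivial radial function, and classical regularity follows from the arguments used in \cref{th:1,th:3}.

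For the strict inequality in the balanced case $k_1 = k_2$, the key observation is that any $u \in X_\tau$ vanishes on the diagonal $\{|x_1| = |x_2|\}$, since $\tau u = u$ forces $u = -u$ there. Setting
\[
u_+ := u\,\chi_{\{|x_1| > |x_2|\}}, \qquad u_- := u\,\chi_{\{|x_1| < |x_2|\}},
\]
the coordinate swap $\sigma:(x_1,x_2,x_3)\mapsto(x_2,x_1,x_3)$ is a genuine isometry of $\R^N$ precisely because $k_1=k_2$, and $u_- = -u_+\circ\sigma$. Disjoint supports together with the evenness of $F$ (resp.\ $G$) give $\Psi(u)=2\Psi(u_+)$, $\Phi(u)=2\Phi(u_+)$ and $\Phi'(u)(u)=2\Phi'(u_+)(u_+)$; hence the Pohožaev functional satisfies $M(u_+)=\tfrac12 M(u)=0$, so $u_+\in\cM$ and $I(u_+) \geq \inf_\cM I$. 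Since $u_+\geq 0$ vanishes on the half $\{|x_1|<|x_2|\}$, it cannot coincide with a ground state of \eqref{eq:BI}, ground states being strictly positive or strictly negative by \cref{th:1}\,(iii) and \cref{th:3}\,(iii); hence $I(u_+) > \inf_\cM I$ strictly. Consequently $I(u)=I(u_+)+I(u_-)=2I(u_+)>2\inf_\cM I$, and passing to the infimum over $\cM_\tau$ delivers $\inf_{\cM_\tau}I > 2\inf_\cM I$.

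The genuine obstacle, I expect, is executing the restricted profile decomposition cleanly: justifying that the orthogonal group action rules out translations with components in the first two blocks, that profiles stay in $X_\tau$ under the remaining $x_3$-translations, and above all that each nonzero profile, after Pohožaev rescaling, accounts for at least $\inf_{\cM_\tau}I$ of the total energy (so that a second profile is incompatible with $u_n$ being minimizing). Once this finer compactness statement is established, both the existence of a nonradial minimizer on $\cM_\tau$ and the strict energy inequality in the balanced case follow relatively cleanly from the sign rigidity of ground states provided by \cref{th:1,th:3}.
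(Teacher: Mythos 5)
Your proposal follows essentially the same route as the paper: symmetric criticality on $X_\tau$, the Willem-type counting argument forcing the profile translations to lie in the $x_3$-block so that the profiles retain the $\cO$- and $\tau$-symmetries, the Pohožaev projection together with the compactness scheme of \cref{th:1,th:3} to obtain a minimizer which is then a critical point and a classical solution, and, for $k_1=k_2$, restriction of the minimizer to $\{|x_1|>|x_2|\}$ combined with the constant-sign rigidity of ground states to get $\inf_{\cM_\tau}I>2\inf_{\cM}I$. Two minor points of comparison: the restriction $u\chi_{\{|x_1|>|x_2|\}}$ need not be nonnegative (only its vanishing on the open set $\{|x_1|<|x_2|\}$ is needed for the contradiction with sign rigidity), and the "at most one nonzero profile" step is obtained in the paper not by showing that every rescaled profile carries at least $\inf_{\cM_\tau}I$ of energy, but by first locating one profile whose projection parameter satisfies $\theta_{j_0}\le 1$ and then using the monotonicity \eqref{F5} (resp. \cref{lem:nehari-type-inequality}) together with Fatou's lemma, exactly as in the proofs of \cref{th:1,th:3}.
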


\subsection{Comments}

We describe arguments to prove \cref{th:1,th:3} and their difficulties. 
As explained above, in the literature, the existence of ground state solutions of \eqref{eq:BI} with a general nonlinearity $f$ is not studied. 
As pointed as in \cref{Rem:ground}, it is not clear that radial functions provide the ground state energy of \eqref{eq:BI} a priori, 
and hence the radial symmetry may not be exploited to prove the existence of ground state solutions for a general nonlinearity $f$. 
Thus, we face the lack of compact embedding of $X_0$ or $X_1$, and 
to the best of the authors' knowledge, this paper is the first to deal with the lack of compactness for \eqref{eq:BI}. 
In addition to the lack of compact embedding, the nonsmoothness of functionals $I,M_0,M_1$ also makes the analysis complicated and different from 
the semilinear case (for instance, \cite{BeLi83-1,Med20,MedJDE}). 
Thus, to solve difficulties and to prove \cref{th:1,th:3}, new ideas or arguments are necessary.

To show the existence of minimizers corresponding to $c_0$ and $c_1$, 
consider any minimizing sequence $(u_n)_{n \in \N} \subset X_0$ for $c_0$ (resp. $(u_n)_{n \in \N} \subset X_1$ for $c_1$). 
Since the Poho\v{z}aev functionals $M_0$ and $M_1$ are different from the semilinear case, 
the boundedness of $(u_n)_{n \in \N}$ in $X_0$ (resp. $X_1$) is an issue. 
To overcome this point, in the zero mass case, \eqref{F6} is used. 
On the other hand, in the positive mass case, the case $N \geq 3$ is simple and similar to the semilinear case, 
however, the cases $N=1,2$ are different. See the beginning of the proof of \cref{th:3}.

After establishing the boundedness of $(u_n)_{n \in \N}$, the next issue is to prove 
the relative compactness of $(u_n)_{n \in \N}$ up to translations. 
Here we employ and develop profile decompositions suited for the nonsmooth functionals $M_0,M_1$ and smooth functional $\Phi$, 
and our argument is inspired by \cite{MedJDE,Gerard}.
For the existence of minimizers, the projection to the Poho\v{z}aev sets $\cM_0$ and $\cM_1$ are constructed and 
we compute and compare energies of projected profiles of $(u_n)_{n \in \N}$ with $c_0$ and $c_1$. 
This detailed analysis also allows us to prove that every minimizer has a constant sign, 
and this fact leads to the existence of radial minimizers.

Even though we know the existence of minimizers, 
it is worth noting that whether these minimizers become critical points of $I$ is not trivial or immediate, 
and this point is more delicate than the semilinear case. 
For the semilinear case, a similar assertion is proved in \cite[Lemma 3.3]{MedJDE} 
where the differentiability of functional originated from the differential operator is utilized. 
In our case, the corresponding functional is $\Psi$, which is not smooth on $X_0$ or $X_1$. 
In addition, the nonsmoothness of $M_0$ and $M_1$ affect the properties of the projections to $\cM_0$ and $\cM_1$, 
and the analysis of the projections onto $\cM_0$, $\cM_1$ and functional $I$ is more delicate and complicated. 
These issues will be solved in \cref{lem:minimizers-are-critical,lem:positive-mass-minimizers-crit} and 
it is one of novelties in this paper.

We emphasize that since the method in this paper does not rely on the compactness of the embedding of function spaces, 
our argument can be applied to obtain the existence of nonradial solutions to \eqref{eq:BI}. 
In particular, \cref{th:4} handles the case $N = 5$ and extends the results in \cite{BIMM}. 
Our setting is slightly different from \cite{BartschWillem,BiSi21,JeLu20,Med20,MedJDE,Wi96}, 
and any combination of $k_1,k_2$ with $k_1,k_2 \geq 2$ and $N-k_1-k_2 \geq 0$ can be treated 
and we obtain other type of nonradial solutions which are not treated in \cite{BIMM}.


It is also worth mentioning that our argument may be extended to a more general problem, 
for instance $f(x,u)$ in place of $f(u)$ in \eqref{eq:BI}. 
In fact, for the semilinear case, 
the characterization of the ground state energy by the mountain pass level is obtained in \cite{JeTa03}, 
and it is useful to handle the nonlinearity $f(x,u)$ or a potential term $V(x) u$. 
The same characterization for \eqref{eq:BI} is obtained in \cref{th:1,th:3}, and we expect that 
this might be helpful to study \eqref{eq:BI} with $f(x,u)$ or $V(x) u$. 
Furthermore, though it is only for minimizers, we succeed to prove that the constraint $M_0(u) = 0$ (resp. $M_1(u) = 0$) is \emph{natural} 
in the sense that a \emph{critical point} of $I |_{\cM_0}$ (resp. $I |_{\cM_1}$) provides a critical point of $I$. 
Therefore, it is interesting to see whether the same is true for higher energies or other nonsmooth constraints.

The structure of the paper is as follows. 
In \cref{Sec:Pre}, the functional setting is recalled and some preliminary properties related to the functionals $I$ and $\Psi$ are proved. 
In \cref{Sec:ProDec} we develop profile decompositions for bounded sequences in $X_0$ and $X_1$. 
\cref{Sec:Zeromass} is then devoted to the zero mass case and contains the proofs of \cref{th:1,th:2}. 
In \cref{sec:positive} we turn to the positive mass case and prove \cref{th:3}. 
Finally, in \cref{sec:nonradial} we establish the existence of nonradial solutions.

\section{Concentration-compactness and a profile decomposition}
\label{Sec:Pre}

In this section, preliminary results are stated and proved in the zero mass and positive mass cases. 
Throughout this section and \cref{Sec:ProDec}, we assume the following: 
\begin{equation}\label{eq-defX}
	p_0 := \begin{dcases}
		2^* & \text{(the zero mass case)},
		\\
		2 & \text{(the positive mass case)},
	\end{dcases}
	\quad 
	X := \begin{dcases}
		X_0 & \text{(the zero mass case)},
		\\
		X_1 & \text{(the positive mass case)}
	\end{dcases}
\end{equation}
and set 
\[
\| u \|_X := \begin{dcases}
	\| u \|_{X_0}  &\text{(the zero mass case)},
	\\
	\| u \|_{X_1} & \text{(the positive mass case)}.
\end{dcases}
\]
Then $(X,\| \cdot \|_X)$ is a reflexive Banach space (cf. \cite[Proposition 3.3]{BIMM24}) and the following embedding holds 
(see \eqref{def:C0} for the definition of $\cC_0(\R^N)$): 
\begin{equation}\label{eq-X-emb}
	X \subset W^{1,p_0} (\R^N) \cap \cC_0(\R^N). 
\end{equation}
Remark also that $D(\Psi) = \{ u \in X \ : \ \| \nabla u \|_\infty \leq 1 \}$ is convex in $X$. 
Similarly, put
\[
M(u) := \begin{dcases}
	M_0(u) & \text{(the zero mass case)}, \\
	M_1(u) & \text{(the positive mass case)}. 
\end{dcases}
\]
Finally, 
the space dimension $N$ is assumed to be $N \geq 3$ in the zero mass case and $N \geq 1$ in the positive mass case.

We first prove that the functional $\Psi$ restricted on $D(\Psi)$ is continuous:

\begin{Lem}\label{Lem-conti-Psi}
	Assume $(u_n)_{n \in \N} \subset D(\Psi)$, $u \in D(\Psi)$ and $\| u_n - u \|_X \to 0$ as $n \to \infty$. 
	Then $\Psi(u_n) \to \Psi(u)$ as $n \to \infty$. 
\end{Lem}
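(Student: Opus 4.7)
The plan is to combine pointwise a.e.\ convergence of $\nabla u_n$ with a dominated convergence argument using a \emph{variable} dominator, exploiting the elementary bound $0 \le 1 - \sqrt{1-t^2} \le t^2$ on $[0,1]$ together with the fact that the $L^2$-convergence of $\nabla u_n$ upgrades to an $L^1$-convergence of $|\nabla u_n|^2$.

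First I would extract from $\| u_n - u \|_X \to 0$ the convergence $\nabla u_n \to \nabla u$ in $L^2(\R^N)$: in the zero-mass case this is built directly into the norm on $X_0$, while in the positive-mass case it is contained in the $H^1$-part of $\| \cdot \|_{X_1}$. Passing to a subsequence, I may further assume $\nabla u_n(x) \to \nabla u(x)$ for almost every $x \in \R^N$. Setting $h(t) := 1 - \sqrt{1-t^2}$ on $[0,1]$, the integrand $h(|\nabla u_n|)$ then converges a.e.\ to $h(|\nabla u|)$.

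The second ingredient comes from the constraint $u_n, u \in D(\Psi)$: since $|\nabla u_n|, |\nabla u| \le 1$ a.e., the inequality $0 \le h(t) \le t^2$ yields the pointwise control $0 \le h(|\nabla u_n|) \le |\nabla u_n|^2$. Moreover, a direct factorization gives
\[
\bigl\| \, |\nabla u_n|^2 - |\nabla u|^2 \, \bigr\|_1 \le \| \nabla u_n - \nabla u \|_2 \, \| \nabla u_n + \nabla u \|_2 \longrightarrow 0,
\]
so $|\nabla u_n|^2 \to |\nabla u|^2$ in $L^1(\R^N)$. An application of the generalized dominated convergence theorem with variable dominator — or, equivalently, Vitali's theorem via the uniform integrability and tightness of $(|\nabla u_n|^2)$, which is inherited by $(h(|\nabla u_n|))$ — then yields $\Psi(u_n) \to \Psi(u)$ along this subsequence, and the standard subsequence principle promotes this to convergence of the full sequence.

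The main obstacle is precisely the absence of a fixed $L^1$ dominator: the natural pointwise bound $|\nabla u_n|^2$ depends on $n$, so the classical Lebesgue dominated convergence theorem does not apply directly. This is resolved by its variable-dominator refinement, which in turn relies on the $L^2$-convergence of $\nabla u_n$ — a piece of information that the norm on $X$ has been designed to provide. Everything else follows routinely from the a.e.\ convergence of $\nabla u_n$ and the quadratic control of the integrand afforded by the constraint $\|\nabla u_n\|_\infty \le 1$ built into $D(\Psi)$.
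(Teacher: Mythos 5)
Your proof is correct and follows essentially the same route as the paper: a.e.\ convergence of the gradients along a subsequence, the elementary bound $0\le 1-\sqrt{1-t^2}\le t^2$ on $[-1,1]$ valid thanks to $u_n,u\in D(\Psi)$, a dominated-convergence argument, and the subsequence principle to recover the full sequence. The only (harmless) difference is the domination device: the paper extracts a subsequence dominated by a fixed $L^2$ function $w$ and applies the classical dominated convergence theorem, while you invoke the variable-dominator (generalized) version, resp.\ Vitali, using $|\nabla u_n|^2\to|\nabla u|^2$ in $L^1$.
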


\begin{proof}
Notice that 
\begin{equation}\label{ineq-t^2}
	\frac{1}{2} t^2 \leq \sqrt{1-t^2} - 1 \leq t^2 \quad \text{for any $t \in [-1,1]$}. 
\end{equation}
Remark also that $\| u_n - u \|_X \to 0$ yields $\| \nabla u_n - \nabla u \|_2 \to 0$. 
Then we may find a subsequence $(u_{n_k})_{k \in \N}$ and $w \in L^2(\R^N)$ such that 
$\nabla u_{n_k}(x) \to \nabla u(x)$ a.e. $\R^N$ and $| \nabla u_{n_k}(x) | + | \nabla u(x) | \leq w(x)$ a.e. $\R^N$. 
By \eqref{ineq-t^2} with $|\nabla u(x)| \leq 1$ and $|\nabla u_{n_k}(x)| \leq 1$ due to $u_{n_k},u \in D(\Psi)$, 
the dominated convergence theorem gives $\Psi(u_{n_k}) \to \Psi(u)$ as $k \to \infty$. 
Since the limit is independent of subsequences, we have $\Psi(u_n) \to \Psi(u)$ as $n \to \infty$ 
and this completes the proof. 
\end{proof}

Next we introduce 
\[
\Psi_0(u) := 
\begin{dcases}
	\Psi(u) & \text{(the zero mass case)},
	\\
	\Psi(u) + a \| u \|^2_2 & \text{(the positive mass case)},
\end{dcases}
\]
where $a \in (0,\infty)$ is any fixed number.

\begin{Lem}\label{l:strconv}
	Suppose that $(u_n)_{n \in \N} \subset D(\Psi)$ satisfies $u_n \rightharpoonup u$ weakly in $X$ and $\Psi_0(u_n) \to \Psi_0(u)$. 
	Then $\| u_n - u \|_X \to 0$ holds. 
\end{Lem}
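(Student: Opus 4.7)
The plan is to exploit the strong convexity of the integrand $\phi(t) := 1 - \sqrt{1-|t|^2}$ on the closed unit ball $\overline{B_1(0)} \subset \R^N$. A direct computation gives $D^2 \phi(t) \succeq I$ on the open ball, or equivalently, $t \mapsto \phi(t) - \tfrac{1}{2}|t|^2$ is convex on $\overline{B_1(0)}$ by continuity. The midpoint inequality for this modified convex function rearranges to the elementary bound
\[
\phi(a) + \phi(b) - 2\phi\!\left( \tfrac{a+b}{2} \right) \geq \tfrac{1}{4} |a-b|^2 \qquad \text{for all } a,b \in \overline{B_1(0)}.
\]
Since $u_n, u \in D(\Psi)$ force $|\nabla u_n|, |\nabla u| \leq 1$ a.e., and $(u_n+u)/2 \in D(\Psi)$ by convexity of the unit ball, integration over $\R^N$ yields the Clarkson-type estimate
\[
\Psi(u_n) + \Psi(u) - 2\,\Psi\!\left( \tfrac{u_n+u}{2} \right) \geq \tfrac{1}{4} \, \| \nabla u_n - \nabla u \|_{2}^{2}.
\]

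Next, I would separate the hypothesis $\Psi_0(u_n) \to \Psi_0(u)$ into its summands. The functional $\Psi$ is convex and strongly lower semicontinuous on $X$ (extended by $+\infty$ outside $D(\Psi)$; the s.l.s.c. follows from \cref{Lem-conti-Psi} together with the observation that an a.e.\ limit of gradients bounded by $1$ is again bounded by $1$), hence weakly sequentially lower semicontinuous; likewise for $\|\cdot\|_2^2$. A standard subsequence argument then forces, in the positive mass case, the individual convergences $\Psi(u_n) \to \Psi(u)$ and $\|u_n\|_2 \to \|u\|_2$, and the Radon-Riesz property of the Hilbert space $L^2(\R^N)$ upgrades the weak convergence $u_n \rightharpoonup u$ to $u_n \to u$ strongly in $L^2$. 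In the zero mass case, $\Psi(u_n)\to\Psi(u)$ is immediate. Applying weak lower semicontinuity of $\Psi$ along $(u_n+u)/2 \rightharpoonup u$ and taking $\limsup$ in the Clarkson-type bound gives
\[
0 = 2\Psi(u) - 2\Psi(u) \geq \tfrac{1}{4} \limsup_{n \to \infty} \| \nabla u_n - \nabla u \|_{2}^{2},
\]
and therefore $\nabla u_n \to \nabla u$ in $L^2(\R^N)$.

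Finally, I would upgrade this to the $L^{2j_0}$-convergence of gradients required by the definition of $\| \cdot \|_X$. Since $\| \nabla u_n - \nabla u \|_\infty \leq 2$, interpolation between $L^2$ and $L^\infty$ yields
\[
\| \nabla u_n - \nabla u \|_{2j_0} \leq \| \nabla u_n - \nabla u \|_{2}^{1/j_0} \, \| \nabla u_n - \nabla u \|_{\infty}^{1 - 1/j_0} \longrightarrow 0.
\]
Combining this with the $L^2$-gradient convergence, and with the $L^2$-convergence of $u_n$ itself in the positive mass case, delivers $\| u_n - u \|_X \to 0$.

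The main obstacle is the extraction of strong $L^2$-convergence of gradients from the nonsmooth functional convergence $\Psi_0(u_n) \to \Psi_0(u)$: because $\Psi$ is not of class $\cC^1$ on $X$, the usual tricks based on testing against $\Psi'(u)$ are unavailable. The pointwise Hessian bound $D^2\phi(t) \succeq I$ on the admissible set $\overline{B_1(0)}$ provides exactly the quadratic modulus of convexity needed as a substitute, and it is the one place where the special Born-Infeld structure of the integrand is genuinely exploited. Once this quantitative convexity and the ensuing Clarkson-type inequality are in hand, the separation of the two terms of $\Psi_0$, the use of Radon-Riesz, and the interpolation step are comparatively routine.
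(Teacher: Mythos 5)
Your argument is correct, but it reaches the decisive gradient compactness by a different mechanism than the paper. The first stage is identical: in the positive mass case you split $\Psi_0(u_n)\to\Psi_0(u)$ into $\Psi(u_n)\to\Psi(u)$ and $\|u_n\|_2\to\|u\|_2$ via weak lower semicontinuity of each summand and a subsequence argument, and you use Radon--Riesz in $L^2$ for $u_n\to u$, exactly as in the paper. From there the paper expands $\Psi(u)=\sum_{j}b_j\|\nabla u\|_{2j}^{2j}$ using \eqref{exp-sq-1t^2}, splits the series into a head $\Psi_{k,1}$ and a tail $\Psi_{k,2}$, uses weak lower semicontinuity of both pieces to deduce $\|\nabla u_n\|_{2j}\to\|\nabla u\|_{2j}$ for every $j$, and then invokes uniform convexity of the spaces $L^{2j}$. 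You instead use the pointwise strong convexity $D^2\phi\succeq I$ of $\phi(t)=1-\sqrt{1-|t|^2}$ on the unit ball (which is correct: $D^2\phi=(1-|t|^2)^{-1/2}I+(1-|t|^2)^{-3/2}\,t\otimes t$), integrate the resulting midpoint inequality to the Clarkson-type bound $\Psi(u_n)+\Psi(u)-2\Psi\bigl(\tfrac{u_n+u}{2}\bigr)\geq\tfrac14\|\nabla u_n-\nabla u\|_2^2$, and close the estimate with weak lower semicontinuity along $\tfrac{u_n+u}{2}\rightharpoonup u$; the $L^{2j_0}$ convergence then follows by interpolation with $\|\nabla u_n-\nabla u\|_\infty\leq 2$, which is exactly where the constraint $D(\Psi)$ pays off. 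Your route is more quantitative (it gives an explicit modulus for $\|\nabla u_n-\nabla u\|_2$ in terms of the energy defect) and avoids both the power-series manipulation and the uniform-convexity argument in every $L^{2j}$; the paper's route involves no Hessian computation and yields, as a by-product, norm convergence of the gradients in all spaces $L^{2j}$, $j\in\N$. One point worth making explicit if you write this up: the weak sequential lower semicontinuity of $\Psi$ on $X$ should be justified by convexity together with strong lower semicontinuity (Fatou along an a.e.\ convergent subsequence of gradients, noting that the constraint $|\nabla u|\leq 1$ passes to the limit), which is a small supplement to \cref{Lem-conti-Psi}; the paper relies on the same fact.
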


\begin{proof}
The following argument is essentially contained in \cite[Proof of Proposition 3.11]{BIMM24}. 
From the uniform convexity of $L^p$ spaces, it suffices to prove 
$\| \nabla u_n \|_{2j} \to \| \nabla u \|_{2j}$ for each $j \in \N$ and $\| u_n \|_2 \to \| u \|_2$ in the positive mass case. 
Since $\Psi$ is weakly lower semicontinuous due to the convexity, we have 
$\Psi(u) \leq \liminf_{n \to \infty} \Psi(u_n)$. 
Moreover, in the positive mass case, 
$\Psi(u_n) \to \Psi(u)$ and $\| u_n \|_2 \to \| u \|_2$ hold. 
In fact, suppose that there exists $(u_{n_k})_{k \in \N}$ such that $\Psi(u) < \limsup_{k \to \infty} \Psi(u_{n_k})$. 
By passing to a subsequence if necessary, we may also suppose $ \limsup_{k \to \infty} \Psi(u_{n_k}) = \lim_{k \to \infty} \Psi(u_{n_k})$. Then 
\[
\begin{aligned}
	\Psi_0(u) = \Psi(u) + a \| u \|_2^2 
	<
	\lim_{k \to \infty} \Psi(u_{n_k}) + a \liminf_{k \to \infty} \| u_{n_k} \|_2^2 
	=
	\liminf_{k \to \infty} \Psi_0(u_{n_k}) = \Psi_0(u),
\end{aligned}
\]
which is a contradiction. Hence $\limsup_{n \to \infty} \Psi(u_n) \leq \Psi(u)$ and 
$\Psi(u_n) \to \Psi(u)$ holds. 
In a similarly way, $\| u_n \|_2 \to \| u \|_2$ can be proved and $\| u_n - u \|_2 \to 0$ holds in the positive mass case.

To prove $\| \nabla u_n \|_{2j} \to \| \nabla u \|_{2j}$ from $\Psi(u_n) \to \Psi(u)$, 
let us recall the following expansion (\cite{K1})
\begin{equation}\label{exp-sq-1t^2}
	1 - \sqrt{1-t^2} = \sum_{j \in \N} b_j t^{2j}, \quad b_j > 0.
\end{equation}
In particular, 
\[
\Psi(u) = \sum_{j \in \N} b_j \| \nabla u \|_{2j}^{2j} \quad \text{for any $u \in D(\Psi)$}. 
\]
For each $k \in \N$, set 
\[
\Psi_{k,1}(u) := \sum_{j=1}^k b_j \| \nabla u \|_{2j}^{2j}, \quad \Psi_{k,2}(u) := \sum_{j=k+1}^\infty b_j \| \nabla u \|_{2j}^{2j}.
\]
It is not hard to see that $\Psi_{k,1}$ and $\Psi_{k,2}$ are convex on $D(\Psi)$, and weakly lower semicontinuous. 
By utilizing the above argument and $\Psi(u_n) \to \Psi(u)$, 
we obtain $\Psi_{k,1} (u_n) \to \Psi(u)$ and $\Psi_{k,2}(u_n) \to \Psi(u)$ for each $k \in \N$. 
By starting from the case $k=1$, $\| \nabla u_n \|_{2j}^{2j} \to \| \nabla u \|_{2j}^{2j}$ is obtained for every $j \geq 1$. 
This completes the proof. 
\end{proof}

Finally, we prove that every critical point of $I$ in $X$ becomes a classical solution of \eqref{eq:BI} and satisfies the Poho\v{z}aev identity. 
A similar assertion is obtained in \cite[Propositions 2.6 and 2.10]{BIMM} for $N \geq 3$. 
Here our main focus is in the case $N=1$. Notice that under \eqref{F1}--\eqref{F2} (resp. \eqref{G1}), $\Phi \in \cC^1(X;\R)$ due to \eqref{eq-X-emb}.

\begin{Lem}\label{l:RegCritP}
	Suppose either $N \geq 3$ and \eqref{F1}--\eqref{F2} or else $N \geq 1$ and \eqref{G1}. 
	Then each critical point $u$ of $I$ in $X$ is a classical solution of \eqref{eq:BI} and the Poho\v{z}aev identity $M(u) = 0$ holds. 
\end{Lem}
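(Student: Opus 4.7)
For $N\geq 3$, the assertion follows verbatim from \cite[Propositions 2.6 and 2.10]{BIMM}, whose analysis of the nonsmooth functional $\Psi$ transfers directly to both $X_0$ (zero mass) and $X_1$ (positive mass, $N\geq 3$), since the distinction between the two regimes is entirely in the smooth perturbation $\Phi$. The new content is the one dimensional case $N=1$, arising only in the positive mass setting, and here I plan to exploit the ODE structure in four steps.

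Step 1 (strict subunitarity a.e.): a direct pointwise computation shows that the convex subdifferential of the integrand $\psi(p)=1-\sqrt{1-p^{2}}$ is empty at $|p|=1$; combined with the criticality condition $\Phi'(u)\in \partial \Psi(u)\subset X_1^{*}$, this rules out positive measure for the set $\{|u'|=1\}$, so $|u'(x)|<1$ for a.e. $x\in\R$. Step 2 (weak equation): on $\{|u'|<1\}$ (which is now of full measure) the subdifferential $\partial\psi(u'(x))$ reduces to the singleton $\{u'(x)/\sqrt{1-(u'(x))^{2}}\}$, so the representation of $\partial \Psi(u)$ as the divergence of an integrable vector field valued a.e.\ in $\partial\psi(\nabla u)$ gives the distributional identity
\[
-\Bigl(\frac{u'}{\sqrt{1-(u')^{2}}}\Bigr)'=f(u) \quad \text{in } \mathcal{D}'(\R).
\]
Step 3 (classical regularity): since $u\in X_1\hookrightarrow \cC_0(\R)$ is bounded and $f\in \cC^{1}$, the right hand side lies in $L^{\infty}$, so $u'/\sqrt{1-(u')^{2}}$ is Lipschitz with bounded range on $\R$. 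Inverting the homeomorphism $t\mapsto t/\sqrt{1-t^{2}}$ from $(-1,1)$ onto $\R$ yields $u'\in \cC(\R)$ with $\|u'\|_{\infty}<1$, and a standard bootstrap gives $u\in \cC^{2}(\R)$ solving \eqref{eq:BI} classically. Step 4 (Poho\v{z}aev): multiplying the classical equation by $xu'(x)$ and integrating by parts over $\R$, with boundary terms vanishing owing to $u,u'\to 0$ at $\pm\infty$, yields $M_1(u)=0$ after rearrangement parallel to the derivation sketched for the multidimensional case.

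The main obstacle will be Step~1: to show that the measurable set $\{|u'|=1\}$ is negligible, I plan to construct explicit admissible variations $v=u-t\varphi$ with $\varphi\in X_1$ chosen nonnegative on $\{u'=1\}$ and nonpositive on $\{u'=-1\}$, so that $v\in D(\Psi)$ for all small $t>0$. A one sided expansion of $\Psi(v)-\Psi(u)$ at $t=0^{+}$ then reveals a leading $O(\sqrt{t})$ singular contribution from $\{|u'|=1\}$ whenever this set has positive measure, which cannot be matched by the linear quantity $-t\Phi'(u)(\varphi)$; this forces the set to be null. Carrying out this construction cleanly within the functional framework of $X_1$, rather than resorting to ad hoc regularization of $\psi$, is the most technical part of the argument I foresee.
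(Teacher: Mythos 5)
Your reduction of the case $N\geq 3$ to \cite{BIMM} matches the paper, but the one–dimensional argument has a genuine gap at its core, namely Step 2. Knowing that $|u'|<1$ a.e.\ does \emph{not} by itself give the distributional equation: the criticality condition \eqref{def-cri} only says $\Phi'(u)\in\partial\Psi(u)$ for the convex functional $\Psi$ on $X_1$, and the ``representation of $\partial\Psi(u)$ as the divergence of an integrable vector field valued a.e.\ in $\partial\psi(u')$'' that you invoke is precisely the statement that has to be proved; it is not available off the shelf for this constrained functional (this is the whole difficulty in \cite{BdAP,K1,BIMM}). Concretely, even with $|u'|<1$ a.e., the candidate field $u'/\sqrt{1-(u')^2}$ need not be locally integrable a priori, and the variations $u+t\psi$ leave $D(\Psi)$ wherever $|u'|$ is merely close to $1$, so you cannot differentiate $\Psi$ in arbitrary directions. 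The paper resolves this by first establishing the integrability \eqref{e:inte} (which already forces $|\{|u'|=1\}|=0$, making your Step 1 redundant) and then running the test–function construction of \cite[Proof of Theorem 1.4]{BdAP}: one works on the sets $E_k=\{|u'|\geq 1-1/k\}$, replaces $\psi$ by $\psi_k$ whose derivative vanishes on $E_k$ and is corrected by $a_k\eta_0$ to restore compact support, differentiates $J$ along these admissible directions, and passes to the limit $k\to\infty$ using \eqref{e:inte}. Without something of this kind your Step 2 is an assertion, not a proof, and Step 3 inherits the problem (also, ``Lipschitz with bounded range on $\R$'' does not follow from the right-hand side being $L^\infty$; only local Lipschitzness does, which fortunately suffices for $u\in\cC^2$).

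Two further points. In Step 1 the admissibility of $v=u-t\varphi$ is governed by the sign of $\varphi'$ on $\{u'=\pm1\}$, not the sign of $\varphi$ as you wrote, and since $\varphi\in X_1$ forces $\int_\R\varphi'\,dx=0$ the negative part of $\varphi'$ must be placed where $|u'|$ is bounded away from $1$; the $O(\sqrt t)$ heuristic is sound, but as written the construction does not close. Finally, the statement covers the positive mass case for all $N\geq 1$, so $N=2$ must be addressed as well; the paper does this by observing that the argument of \cite{BIMM} remains valid for $N=2$ in the positive mass case, whereas your proposal treats only $N\geq 3$ and $N=1$.
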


\begin{proof}
Let $u \in X$ be a critical point of $I$ and define 
\[
\rho(x) := \begin{dcases}
	f(u(x)) & \text{(the zero mass case)}, \\
	-u(x) + g(u(x)) & \text{(the positive mass case)}.
\end{dcases}
\]
By \eqref{eq-X-emb}, $\rho \in L^\infty(\R^N) \cap \cC(\R^N)$ holds and by rewriting \eqref{def-cri}, $u$ is a (unique) minimizer of 
\[
J(v) := \Psi(v) - \int_{\R} \rho v \, dx : X \to \R \cup \{\infty\}.
\]

When $N \geq 3$, from \cite[Propositions 2.13]{BIMM} (cf. \cite[Proposition 2.10]{BIMM}), 
we see that $u$ is a weak solution of \eqref{eq:BI} and $ \| \nabla u \|_\infty < 1$ holds. 
In particular, by \cite{BI2} (cf. \cite[Proof of Theorem 1.11 Step 5]{BIMM24}), $u \in \cC^{1,\alpha} (\R^N)$ for some $\alpha \in (0,1)$ and $u$ is a strong solution of 
\begin{equation}\label{UniEeq}
- \sum_{i,j=1}^N a_{ij} (x) \partial_i \partial_j u = \rho \quad \text{in} \ \R^N, \quad 
a_{ij} (x) := \frac{\delta_{ij}}{\sqrt{ 1 - |\nabla u(x)|^2 }}  + \frac{\partial_i u (x) \partial_j u(x)}{\left( 1 - | \nabla u(x)|^2 \right)^{3/2}}
\end{equation}
Since $a_{ij} \in \cC^{0,\alpha}_{\rm loc}(\R^N)$ and $\rho \in \cC^1(\R^N)$, the Schauder theory gives $u \in \cC^2(\R^N)$.

We remark that the argument in \cite{BIMM} is also valid for $N=2$ in the positive mass case. 
Hence, in this case, $u \in \cC^{1,\alpha} (\R^2)$ is a strong solution of \eqref{UniEeq} and 
the argument above leads to $u \in \cC^2(\R^2)$.

Let $N=1$ and $u \in X$ be a critical point of $I$, 
and notice that $\rho \in L^2(\R) \cap L^\infty(\R)$ in view of \eqref{G1} and \eqref{eq-X-emb}. 
We follow the argument in \cite[Theorem 1.4, Proposition 2.7, Remark 2.9]{BdAP}. 
Without any changes, the argument in \cite[Proposition 2.7]{BdAP} shows that 
\begin{equation}\label{e:inte}
	\frac{(u')^2}{\sqrt{1-(u')^2}} \in L^1(\R), \quad 
	\frac{ u' \psi' }{\sqrt{1-(u')^2}} \in L^1(\R) \quad \text{for each $\psi \in X_1$ with $\psi' \in L^\infty(\R)$}. 
\end{equation}
We next prove that $u$ is a weak solution of \eqref{eq:BI} by a slightly modified argument in \cite[Proof of Theorem 1.4]{BdAP}. 
Let $\psi \in  \cC^\infty_c(\R)$ and $E_k := \{ x \in \R \ | \ |u'(x)| \geq 1 - 1/k  \}$ for $k \in \N$.

If $|E_k| = 0$ holds for some $k \in \N$, 
then $\| u' \|_\infty < 1$ and $u + t \psi \in D(\Psi)$ provided $|t| \ll 1$. 
Since $u$ is a minimizer of $J$, $J(u) \leq J( u + t \psi )$ holds for any $t \in \R$ with $|t| \ll 1$. 
This together with $\| u' \|_\infty < 1$ and the dominated convergence theorem implies 
\[
\int_{\R} \frac{u' \psi'}{\sqrt{1 - (u')^2}} \, dx = \int_\R \rho \psi \, dx = \int_{\R} f(u) \psi \, dx \quad 
\text{for all $\psi \in \cC^\infty_c(\R)$}. 
\]
In particular, $u$ is a weak solution of \eqref{eq:BI}. 
Furthermore, by $\| u' \|_\infty < 1$, the above equality asserts that 
$u' / \sqrt{1 - (u')^2} \in W^{1,\infty} (\R)$ with the derivative $\rho \in \cC^1(\R)$. 
Thus, $u \in \cC^2(\R)$.

In what follows, we may suppose $|E_k| > 0$ for each $k \in \N$. 
Notice that $E_{k+1} \subset E_k$, $|E_2| < \infty$ and $| E_k | \to 0$ as $k \to \infty$ by \eqref{e:inte}. 
Let $n_0 \in \N$ be large enough so that $ 0 < | [-n_0,n_0] \cap E_{2}^c|$ and set  
\[
\eta_0(x) := \frac{1}{ | [-n_0,n_0] \cap E_{2}^c|  } \int_{-\infty}^x \bm{1}_{ [-n_0,n_0] \cap E_{2}^c } (s) \, ds \in  W^{1,\infty} (\R).
\]
For $k \geq 2$, introduce 
\[
a_k := \int_\R \psi'(s) \bm{1}_{E_k^c} (s) \, ds = \int_{E_k^c} \psi'(s) \, ds \in \R. 
\]
Since $|E_k| \to 0$ as $k \to \infty$, the fact $\psi \in \cC^\infty_c(\R)$ yields 
\[
\lim_{k \to \infty} a_k = \int_{\R} \psi' \, ds = 0. 
\]
Define $\psi_k$ by 
\[
\psi_k(x) := \int_{-\infty}^x \psi'(s) \bm{1}_{E_k^c} (s) \, ds - a_k \eta_0 (x). 
\]
Let $R > n_0$ satisfy $\supp \psi \subset [-R,R]$. Since $\eta_0$ and $\psi_k$ are constant on $\R \setminus [-R,R]$, 
the choice of $a_k$ yields $\supp \psi_k \subset [-R,R]$ for every $k \geq 2$ and 
$|\psi_k'(x)| > 0$ implies $x \in E_k^c$. Furthermore, from $a_k \to 0$ and $|E_k| \to 0$ with $\supp \psi_k, \supp \psi \subset [-R,R]$, 
it follows that $\| \psi_k - \psi \|_1 \to 0$. 
By $\| u'+t\psi_k' \|_\infty \leq 1 - 1/(2k)$ for $|t| \ll 1$, 
the dominated convergence theorem yields 
\[
\begin{aligned}
	\lim_{t \to 0} \frac{J(u+t\psi_k) - J(u) }{t} 
	&= 
	\int_{\R} \frac{ u' \psi_k' }{\sqrt{ 1 - (u')^2 }} \, dx - \int_{\R} \rho \psi_k \, dx 
	\\
	&= 
	\int_{\R} \frac{ u' \left( \psi' \bm{1}_{E_k^c}  - a_k \eta_0'  \right)  }{\sqrt{1-(u')^2}} \, dx 
	- \int_{\R} \rho \psi_k \, dx. 
\end{aligned}
\]
Recalling that $u$ is a minimizer of $J$, we observe that for each $k \geq 2$ and $\psi \in \cC^\infty_c(\R)$, 
\[
0 \leq \lim_{t \to 0} \frac{J(u+t\psi_k) - J(u) }{t} 
= \int_{\R} \frac{ u' \left( \psi' \bm{1}_{E_k^c} - a_k \eta_0' \right)  }{\sqrt{1-(u')^2}} \, dx 
- \int_{\R} \rho \psi_k \, dx. 
\]
Letting $k \to \infty$ together with \eqref{e:inte} and $\| \psi_k - \psi \|_1 \to 0$ leads to 
\[
0 \leq \int_{\R} \frac{u'\psi'}{\sqrt{1 - (u')^2}} \, dx - \int_{\R} \rho \psi \, dx \quad \text{for every $\psi \in \cC^\infty_c (\R)$}. 
\]
Since $\psi \in \cC^\infty_c (\R)$ is arbitrary, the above inequality becomes the equality and $u$ is a weak solution of \eqref{eq:BI}, 
$u'/\sqrt{1 - (u')^2} \in W^{1,1}_{\rm loc} (\R)$ and $( u' / \sqrt{1 - (u')^2} )' = - \rho \in \cC^1(\R)$. 
Therefore, $u \in \cC^2(\R)$.

Once we know the regularity of $u$, it is not difficult to verify the Poho\v{z}aev identity. 
For the details, see \cite[Proof of Proposition 2.6]{BIMM}. 
\end{proof}

\section{Concentration-compactness and a profile decomposition}
\label{Sec:ProDec}

In this section, we establish a profile decomposition for bounded sequences in $X$, which is useful to prove the existence of minimizers of $I$ on $\cM_i$. 
Recall that in this section, the same notation to \cref{Sec:Pre} is used: $p_0$, $X$ and $\Psi_0$. 
For the profile decomposition, several lemmas are necessary. Firstly, a slight extension of \cite[Lemma 2.27]{BIMM} is needed:

\begin{Lem}\label{lem:lions}
Let $Q := [0,1]^N$, $(u_n)_{n \in \N} \subset X $ be bounded in $X$ and $\zeta \in \cC( \R ; [0,\infty)  )$ satisfy 
\begin{equation}\label{ass-zeta}
\limsup_{s \to 0} \frac{\zeta(s)}{|s|^{p_0}} = 0.
\end{equation}
Then for every $q \in (p_0,p^*_0)$ and $\varepsilon \in (0,1)$ where $1/p_0^* = (1/p_0 -1/N)_+$, 
there exists $C=C(L, \zeta,q,\varepsilon) > 0$, where $L := \sup_{n \in \N} \| u_n \|_X$, 
such that 
\[
\int_{\R^N} \left| \zeta(u_n) \right| dx \leq \varepsilon + C \left( \sup_{y \in \Z^N} \int_{y+Q} \left| u_n \right|^{q} dx \right)^{ (q-p_0)/q  }. 
\]
 In particular, if 
\[
\lim_{n\to\infty} \sup_{y \in \Z^N} \int_{y+Q} |u_n|^{q} \, dx = 0,
\]
then 
\[\int_{\R^N} \zeta(u_n) \, dx \to 0.\]
\end{Lem}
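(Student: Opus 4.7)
My plan is to split the integrand according to the size of $u_n$ and then apply a Lions-type cube decomposition to the super-polynomial piece. First I establish a uniform $L^\infty$ bound on the sequence: since $\nabla u_n$ is bounded in $L^{2j_0}(\R^N)$ with $2j_0>N$ and $u_n$ is bounded in $L^{p_0}(\R^N)$, a Gagliardo--Nirenberg interpolation yields the continuous embedding $X\hookrightarrow L^\infty(\R^N)$, so $M:=\sup_{n}\|u_n\|_\infty \le C_0 L<\infty$. By \eqref{ass-zeta}, for every $\eta>0$ I may choose $\delta=\delta(\eta,\zeta)\in(0,1]$ with $\zeta(s)\le\eta|s|^{p_0}$ for $|s|\le\delta$, while on the compact range $\delta\le|s|\le M$ continuity of $\zeta$ gives $\zeta(s)\le K_M:=\sup_{|t|\le M}\zeta(t)\le (K_M/\delta^q)|s|^q$. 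Hence
\[
\zeta(s)\le \eta|s|^{p_0}+\frac{K_M}{\delta^q}|s|^q \qquad\text{for all }|s|\le M.
\]
Integrating over $\R^N$ and using $\int|u_n|^{p_0}\,dx\le C_1 L^{p_0}$ from $X\hookrightarrow L^{p_0}(\R^N)$, I then fix $\eta=\eta(L,\varepsilon,\zeta)$ so that $\eta C_1 L^{p_0}\le\varepsilon$; this choice in turn fixes $\delta$ and $K_M$ in terms of $(L,\zeta,q,\varepsilon)$.

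It remains to control $\int|u_n|^q$ by the supremum-over-cubes through the Lions-type estimate
\[
\int_{\R^N}|u|^q\,dx \le C_q\Bigl(\sup_{y\in\Z^N}\int_{y+Q}|u|^q\,dx\Bigr)^{(q-p_0)/q}\|u\|_{W^{1,p_0}(\R^N)}^{p_0}.
\]
To derive this I fix some $r\in(q,p_0^*)$ (when $p_0^*=\infty$, any large finite $r$ suffices). On each unit cube $y+Q$, Hölder's inequality with conjugate exponents $r/(r-p_0)$ and $r/p_0$, together with the trivial bound $\|u\|_{L^{a}(y+Q)}\le\|u\|_{L^q(y+Q)}$ for the exponent $a=(q-p_0)r/(r-p_0)\le q$ (valid because $|y+Q|=1$), gives
\[
\int_{y+Q}|u|^q \le \|u\|_{L^q(y+Q)}^{q-p_0}\,\|u\|_{L^r(y+Q)}^{p_0}.
\]
Since $r\le p_0^*$, the Sobolev embedding on the unit cube yields $\|u\|_{L^r(y+Q)}^{p_0}\le C\|u\|_{W^{1,p_0}(y+Q)}^{p_0}$. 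Pulling $\bigl(\sup_{y'}\int_{y'+Q}|u|^q\bigr)^{(q-p_0)/q}$ out of the first factor and summing over $y\in\Z^N$ produces the Lions estimate, because $\sum_{y}\|u\|_{W^{1,p_0}(y+Q)}^{p_0}=\|u\|_{W^{1,p_0}}^{p_0}$.

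Combining the two estimates with $\|u_n\|_{W^{1,p_0}}^{p_0}\le C_2 L^{p_0}$ yields the claimed inequality with constant $C=C(L,\zeta,q,\varepsilon)$, and the ``in particular'' statement follows by letting $\varepsilon\to 0$ once the supremum tends to zero. The one delicate point is the uniform $L^\infty$ bound for $(u_n)$, which is essential for controlling $\zeta$ at intermediate values of $u_n$ (no growth assumption on $\zeta$ at infinity is imposed) and which relies on the extra integrability $\nabla u\in L^{2j_0}$ with $2j_0>N$ built into the definition of $X$; beyond that, the argument is a classical Lions cube decomposition.
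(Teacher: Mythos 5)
Your proof is correct and follows essentially the same route as the paper: a uniform $L^\infty$ bound from the embedding $X\subset W^{1,p_0}(\R^N)\cap \cC_0(\R^N)$, the pointwise splitting $\zeta(s)\le \eta|s|^{p_0}+C|s|^{q}$, and a cube-by-cube decomposition of $\|u_n\|_q^q$ combined with the Sobolev embedding on unit cubes. The only (cosmetic) difference is that you pass through H\"older with an auxiliary exponent $r\in(q,p_0^*)$ on each cube, whereas the paper bounds $\|u_n\|_{L^q(y+Q)}^{q}\le\bigl(\sup_y\|u_n\|_{L^q(y+Q)}^{q-p_0}\bigr)\|u_n\|_{L^q(y+Q)}^{p_0}$ and applies $W^{1,p_0}(Q)\subset L^q(Q)$ directly; your explicit choice of $\eta$ so that the first term is exactly $\varepsilon$ is a harmless refinement.
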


\begin{proof}
Let $q \in (p_0,p_0^*)$ and $\varepsilon \in (0,1)$. 
From \eqref{eq-defX} and \eqref{eq-X-emb}, $W^{1,p_0} (\R^N) \subset L^q(\R^N)$ holds.  
Since $(u_n)_{n \in \N}$ is bounded in $X$, it is bounded in $L^\infty(\R^N)$. 
By this bound in $L^\infty(\R^N)$ and \eqref{ass-zeta}, there exists $C = C( L, \zeta,q,\varepsilon)  > 0$ such that
\[
\zeta(u_n(x)) \leq \varepsilon |u_n(x)|^{p_0} + C |u_n(x)|^q \quad \text{for all $x \in \R^N$ and $n \in \N$},
\]
which leads to 
\begin{equation}\label{bdd-zeta-un}
	\int_{\R^N} \zeta(u_n) \, dx \leq \varepsilon \|u_n\|_{p_0}^{p_0} + C \|u_n\|_q^q \quad \text{for all $n \in \N$}.
\end{equation}
Recall $L = \sup_{n \in \N} \| u_n \|_X < \infty$ and 
notice also that $W^{1,p_0} (Q) \subset L^q(Q)$, $X \subset W^{1,p_0} (\R^N) \cap \cC_0(\R^N)$ and 
\[
\begin{aligned}
	\| u_n \|_{q}^q &= \sum_{y \in \Z^N} \| u_n \|_{L^q(y+Q)}^q
	\\
	& \leq \left( \sup_{y \in \Z^N} \| u_n \|_{L^q(y+Q)}^{q-p_0} \right) \sum_{y \in \Z^N} \| u_n \|_{L^q(y+Q)}^{p_0} 
	\\
	& \leq \left( \sup_{y \in \Z^N} \| u_n \|_{L^q(y+Q)}^{q-p_0} \right) \sum_{y \in \Z^N} C_{N,p_0,q} \| u_n \|_{W^{1,p_0} (y+Q)}^{p_0}
	\\
	& = C_{N,p_0,q} \| u_n \|_{W^{1,p_0}(\R^N)}^{p_0} \sup_{y \in \Z^N} \| u_n \|_{L^q(y+Q)}^{q-p_0}
	\\
	& \leq C_{N,p_0,q} C_{L,N,p_0} \sup_{y \in \Z^N} \| u_n \|_{L^q(y+Q)}^{q-p_0}. 
\end{aligned}
\]
Thus \eqref{bdd-zeta-un} implies 
\[
\int_{\R^N} \zeta(u_n) \, dx \leq \left( \sup_{n \geq 1} \| u_n \|_{p_0}^{p_0} \right) \varepsilon + C \sup_{y \in \Z^N} \left( \int_{y+Q} \left|u_n\right|^q dx \right)^{(q-p_0)/q}.
\]
Thus we complete the proof. 
\end{proof}

%

\begin{Lem}\label{lem:BL}
Suppose that $\zeta : \R \rightarrow [0,\infty)$ is of class $\cC^1$ such that $|\zeta'(s)| \lesssim |s|^{p_0-1}$ for $|s| \leq 1$ and $\zeta(0)=0$. 
Let $(u_n) \subset X $ be a bounded sequence such that $u_n(x) \to u_0(x)$ a.e. $x \in \R^N$. Then
\[
\lim_{n\to\infty} \left( \int_{\R^N} \zeta(u_n) \, dx - \int_{\R^N} \zeta(u_n - u_0) \, dx \right) = \int_{\R^N} \zeta(u_0) \, dx.
\]
\end{Lem}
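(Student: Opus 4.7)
\textbf{Proof plan for \cref{lem:BL}.}
The plan is to follow the classical Brezis--Lieb truncation scheme adapted to the $p_0$-subcritical growth of $\zeta'$ near the origin. First, the uniform bound $\sup_{n} \| u_n \|_X < \infty$ together with the embedding $X \subset W^{1,p_0}(\R^N) \cap \cC_0(\R^N)$ and a Morrey-type estimate coming from $2 j_0 > N$ forces $(u_n)$ to be bounded in $L^\infty(\R^N) \cap L^{p_0}(\R^N)$; passing to the a.e.\ limit with Fatou also places $u_0 \in L^\infty \cap L^{p_0}$. Fix $M > 0$ with $\| u_n \|_\infty, \| u_0 \|_\infty \leq M$ for every $n$, set $v_n := u_n - u_0$, and note that $v_n(x) \to 0$ a.e., that $(v_n)_{n \in \N}$ is bounded in $L^{p_0}$, and that the goal reduces to showing $\int_{\R^N}[\zeta(u_n) - \zeta(v_n) - \zeta(u_0)]\, dx \to 0$.

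The main technical step, which I expect to be the most delicate, is the pointwise inequality
\[
\bigl|\zeta(a+b) - \zeta(a) - \zeta(b)\bigr| \leq \eps |a|^{p_0} + C_\eps |b|^{p_0} \quad \text{for all } |a|,|b| \leq 2M,
\]
valid for every $\eps > 0$ and some $C_\eps > 0$. To derive it, I would first upgrade the hypothesis $|\zeta'(s)| \lesssim |s|^{p_0-1}$ on $\{|s|\leq 1\}$ to $|\zeta'(s)| \leq C_M |s|^{p_0-1}$ on $\{|s|\leq 4M\}$ by combining the assumed small-$s$ growth with the continuity of $\zeta'$ on the compact annulus $\{1\leq |s|\leq 4M\}$, where $|s|^{p_0-1} \geq 1$ absorbs the extra constant. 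From the identity
\[
\zeta(a+b) - \zeta(a) - \zeta(b) = \int_0^1 \bigl[\zeta'(a+tb) - \zeta'(tb)\bigr]\, b\, dt,
\]
the upgraded bound yields pointwise control by $C_M (|a|+|b|)^{p_0-1}|b| \lesssim |a|^{p_0-1}|b| + |b|^{p_0}$, and Young's inequality $|a|^{p_0-1}|b| \leq \eps |a|^{p_0} + C_\eps |b|^{p_0}$ produces the claimed estimate. The obstacle here is essentially bookkeeping: the growth hypothesis is given only for $|s|\leq 1$, whereas we need it on a range controlled by $M$, and one must balance the two sides via Young so that the $|a|^{p_0}$ coefficient is arbitrarily small.

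To conclude, I apply the pointwise estimate with $a := v_n$ and $b := u_0$ and define the truncated function $h_{n,\eps}(x) := \max\bigl\{ \bigl|\zeta(u_n) - \zeta(v_n) - \zeta(u_0)\bigr|(x) - \eps |v_n(x)|^{p_0},\, 0 \bigr\}$. Then $0 \leq h_{n,\eps} \leq C_\eps |u_0|^{p_0} \in L^1(\R^N)$, and $h_{n,\eps}(x) \to 0$ a.e.\ by continuity of $\zeta$ together with $v_n \to 0$ a.e., so the dominated convergence theorem gives $\int_{\R^N} h_{n,\eps}\, dx \to 0$. Hence
\[
\limsup_{n \to \infty} \int_{\R^N} \bigl|\zeta(u_n) - \zeta(v_n) - \zeta(u_0)\bigr|\, dx \leq \eps \sup_{n \in \N} \| v_n \|_{p_0}^{p_0},
\]
and letting $\eps \to 0^+$, together with the uniform $L^{p_0}$ bound on $(v_n)$, yields the desired limit.
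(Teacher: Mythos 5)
Your proof is correct, but it follows a genuinely different route from the paper. You run the classical Brezis--Lieb scheme: upgrade the growth bound $|\zeta'(s)| \lesssim |s|^{p_0-1}$ from $\{|s|\leq 1\}$ to the full range $\{|s|\leq 4M\}$ dictated by the uniform $L^\infty$ bound, derive the pointwise inequality $|\zeta(a+b)-\zeta(a)-\zeta(b)| \leq \varepsilon |a|^{p_0} + C_\varepsilon |b|^{p_0}$ via the fundamental theorem of calculus in the variable $t$ of $\zeta(a+tb)-\zeta(tb)$ plus Young's inequality, and then conclude with the truncation $h_{n,\varepsilon}$ and dominated convergence. The paper instead writes the difference of the two integrals directly as $\int_0^1\int_{\R^N}\zeta'(u_n-su_0)\,u_0\,dx\,ds$ (differentiating along the segment from $u_n-u_0$ to $u_n$) and passes to the limit by Vitali's convergence theorem, using the same boundedness in $L^\infty$ and the bound $|\zeta'(u_n-su_0)|\lesssim |u_n-su_0|^{p_0-1}$ (which implicitly relies on the same upgrade of the growth hypothesis you make explicit). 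The paper's argument is shorter and exploits the $\cC^1$ hypothesis more directly, with no $\varepsilon$-splitting; yours is the more standard and slightly more robust route, and it actually yields the stronger conclusion $\int_{\R^N}|\zeta(u_n)-\zeta(u_n-u_0)-\zeta(u_0)|\,dx \to 0$, of which the stated limit is an immediate consequence. The only cosmetic point worth adding is a sentence noting that $\zeta(u_n),\zeta(u_n-u_0),\zeta(u_0)$ are individually integrable (which follows from $\zeta(0)=0$ and the upgraded bound on $\zeta'$, giving $\zeta(s)\leq C|s|^{p_0}$ on the relevant range), so that splitting the limit into the three separate integrals is legitimate.
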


\begin{proof}
Observe that
\begin{align*}
\int_{\R^N} \zeta(u_n) \, dx - \int_{\R^N} \zeta(u_n - u_0) \, dx &= - \int_{\R^N} \int_0^1  \frac{d}{ds} \zeta(u_n - s u_0) \, ds \, dx = \int_0^1 \int_{\R^N}  \zeta'(u_n - s u_0) u_0 \, dx \, ds \\
&\to \int_0^1 \int_{\R^N}  \zeta'(u_0 - s u_0) u_0 \, dx \, ds = - \int_{\R^N} \int_0^1  \frac{d}{ds} \zeta(u_0 - s u_0) \, ds \, dx \\
&= \int_{\R^N} \zeta(u_0) \, dx - \int_{\R^N} \zeta(u_0 - u_0) \, dx = \int_{\R^N} \zeta(u_0) \, dx,
\end{align*}
where to pass to the limit we used Vitali's convergence theorem, $X \subset L^\infty(\R^N)$, the boundedness of $(u_n)_{n \in \N}$ in $X$ and the estimate
$\left| \zeta'(u_n - s u_0) \right| \lesssim |u_n - su_0|^{p_0-1}$ a.e. $\R^N$. 
\end{proof}

Finally we prove the profile decomposition.

\begin{Prop}\label{Prop:decomp}
Let $(u_n)_{n \in \N} \subset X$ be bounded in $X$. 
Then there are $K \in \{0,1,2,\ldots\} \cup \{\infty\}$, $(y_n^j)_{n \in \N} \subset \R^N$, $\widetilde{u}_j \in X$ for $0 \leq j < K+1$ such that 
for any $a \in (0,\infty)$ in the positive mass case, 
\[
\begin{aligned}
u_n(\cdot + y_n^j) &\weakto \widetilde{u}_j & &\text{for $0 \leq j < K +1$}, \\
\widetilde{u}_j &\neq 0  & &\text{for } 1 \leq j < K+1, \\
\lim_{n\to\infty} \Psi_0(u_n) &\geq \sum_{j=0}^K \Psi_0(\widetilde{u}_j)   
\end{aligned}
\]
and
\[
\lim_{n \to \infty} \int_{\R^N} \zeta(u_n) \, dx = \sum_{j=0}^K \int_{\R^N} \zeta(\widetilde{u}_j) \, dx
\]
for any function $\zeta : \R \rightarrow [0,\infty)$ of class $\cC^1$ such that $|\zeta'(s)| \lesssim |s|^{p_0-1}$ for $|s| \leq 1$ and 
\[
\limsup_{s \to 0} \frac{\zeta(s)}{|s|^{p_0}} = 0.
\]
\end{Prop}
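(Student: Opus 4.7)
The plan is to build the profiles inductively by extracting weak limits at successively chosen concentration points, using \cref{lem:lions} as the vanishing criterion, and then to establish the two limit statements by iterating the Brezis--Lieb-type identity \cref{lem:BL} (for the $\zeta$-integral) and by combining weak lower semicontinuity of $\Psi_0$ with asymptotic orthogonality of the translations (for $\Psi_0$).

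\textbf{Inductive extraction and orthogonality.}
Pass to a subsequence so that $\lim_n \int_{\R^N} \zeta(u_n)\,dx$ and $\lim_n \Psi_0(u_n)$ exist in $[0,\infty]$. Set $v_n^0 := u_n$, $y_n^0 := 0$, and let $\widetilde{u}_0 \in X$ be a weak limit of $v_n^0$ along a subsequence. Given $\widetilde{u}_0,\dots,\widetilde{u}_{k-1}$ and $y_n^0,\dots,y_n^{k-1}$, define
\[
v_n^k(x) := u_n(x) - \sum_{j=0}^{k-1} \widetilde{u}_j(x - y_n^j).
\]
Fix $q \in (p_0,p_0^*)$. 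If $\sup_{y \in \Z^N} \int_{y+Q} |v_n^k|^q\,dx \to 0$, terminate with $K := k-1$; otherwise select $y_n^k \in \Z^N$ realizing a positive fraction of this supremum and let $\widetilde{u}_k$ be a weak limit of $v_n^k(\cdot + y_n^k)$ in $X$ along a further subsequence. The concentration lower bound together with the local Rellich embedding $W^{1,p_0}(y+Q) \hookrightarrow L^q(y+Q)$ forces $\widetilde{u}_k \neq 0$. One then verifies $|y_n^i - y_n^j| \to \infty$ for $i \neq j$: were $y_n^i - y_n^j \to y^\ast$ along a subsequence, then $v_n^j(\cdot + y_n^j)$ would have weak limit $\widetilde{u}_j + \widetilde{u}_i(\cdot + y^\ast) \neq \widetilde{u}_j$, contradicting the construction. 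Using $\widetilde{u}_i \in \cC_0(\R^N)$ and weak $L^{p_0}$-convergence of translated gradients, one further obtains $u_n(\cdot + y_n^j) \weakto \widetilde{u}_j$ in $X$ for every $j \leq K$.

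\textbf{Energy identity and inequality.}
Iterating \cref{lem:BL} with shift $y_n^j$ at the $j$-th step yields the telescoping identity
\[
\int_{\R^N} \zeta(u_n)\,dx = \sum_{j=0}^{k} \int_{\R^N} \zeta(\widetilde{u}_j)\,dx + \int_{\R^N} \zeta(v_n^{k+1})\,dx + o_n(1).
\]
When the extraction terminates at $K$, \cref{lem:lions} makes the remainder vanish. When $K = \infty$, boundedness of the partial sums by $\lim_n \int \zeta(u_n)\,dx$ ensures summability of the series, and a diagonal argument in $(n,k)$ closes the identity. For the $\Psi_0$-bound, assume $\lim_n \Psi_0(u_n) < \infty$ (else the inequality is trivial); then $u_n \in D(\Psi)$ for large $n$ and \eqref{exp-sq-1t^2} gives $\Psi_0(u) = a \|u\|_2^2 + \sum_{\ell \geq 1} b_\ell \|\nabla u\|_{2\ell}^{2\ell}$ (with $a = 0$ in the zero-mass case). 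Fixing $k \leq K$ and $R > 0$, the balls $B(y_n^j,R)$ with $0 \leq j \leq k$ are pairwise disjoint for $n$ large; weak lower semicontinuity of the $L^{2\ell}$-norm on $B(0,R)$, applied to $\nabla u_n(\cdot + y_n^j) \weakto \nabla \widetilde{u}_j$, together with additivity over disjoint sets, yields
\[
\sum_{j=0}^{k} \int_{B(0,R)} |\nabla \widetilde{u}_j|^{2\ell}\,dx \leq \liminf_n \int_{\R^N} |\nabla u_n|^{2\ell}\,dx,
\]
with the analogous bound for $\|u_n\|_2^2$ in the positive-mass case. Multiplying by $b_\ell \geq 0$, summing in $\ell$ (Fatou for the counting measure), letting $R \to \infty$ by monotone convergence, and sending $k \to K$ gives $\lim_n \Psi_0(u_n) \geq \sum_{j=0}^{K} \Psi_0(\widetilde{u}_j)$.

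\textbf{Main obstacle.}
The delicate point is the $\Psi_0$-inequality. Unlike the pointwise-composed functional $\int \zeta(u)\,dx$, the gradient-dependent $\Psi_0$ admits no pointwise Brezis--Lieb splitting because only weak $L^{p_0}$-convergence of $\nabla u_n$ is available, with no a.e.\ convergence of gradients. The additive lower bound must thus be recovered indirectly through the series representation of $1 - \sqrt{1-t^2}$ and the geometric asymptotic disjointness of the translates supplied by the orthogonality step---both ingredients absent from the scalar argument for $\zeta$.
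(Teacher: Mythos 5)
Your construction of the profiles, the orthogonality of the translations, the termination criterion via \cref{lem:lions}, and the treatment of $\Psi_0$ by localizing on disjoint balls $B(y_n^j,R)$ and using weak lower semicontinuity are essentially the paper's argument and are fine. The genuine gap is in the case $K=\infty$, at the point where you write that ``boundedness of the partial sums by $\lim_n\int\zeta(u_n)\,dx$ ensures summability of the series, and a diagonal argument in $(n,k)$ closes the identity.'' Summability of $\sum_j\int_{\R^N}\zeta(\widetilde u_j)\,dx$ only tells you that the remainder
$\lim_{n\to\infty}\int_{\R^N}\zeta(v_n^{k+1})\,dx=\lim_{n\to\infty}\int_{\R^N}\zeta(u_n)\,dx-\sum_{j=0}^{k}\int_{\R^N}\zeta(\widetilde u_j)\,dx$
decreases to some $\delta\geq 0$; nothing in your argument excludes a positive defect $\delta>0$, which is exactly what the proposition must rule out. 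A diagonal extraction in $(n,k)$ cannot produce this for free: you need a quantitative vanishing estimate for the remainders, uniform in $k$.

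Concretely, two ingredients are missing. First, you must show that the concentration levels $d_k:=\limsup_n\sup_{y\in\Z^N}\int_{y+Q}|v_n^k|^q\,dx$ tend to $0$; the paper does this by choosing $y_n^{k+1}$ so that $\int_{y_n^{k+1}+Q}|v_n^k|^q\,dx\to d_k$, identifying $d_k=\int_Q|\widetilde u_{k+1}|^q\,dx$ (local Rellich), and then summing $\sum_k d_k<\infty$ via the Brezis--Lieb splitting of $\|\cdot\|_q^q$ along the profiles. Your ``positive fraction of the supremum'' choice could be made to work, but you never track the levels $d_k$ nor prove they vanish. Second, to convert $d_k\to 0$ into $\limsup_n\int_{\R^N}\zeta(v_n^k)\,dx\leq\varepsilon+C_\varepsilon d_k^{\theta}$ you must apply \cref{lem:lions} to the family $(v_n^k)_n$ with a constant independent of $k$; since the constant there depends on $L=\sup_n\|u_n\|_X$, this requires a bound such as $\limsup_n\|v_n^k\|_X\leq 2\limsup_n\|u_n\|_X$ for all $k$ (the paper's estimate \eqref{bdd-vkn}, proved from the orthogonality of the shifts and $\sum_{j\le k}\|\widetilde u_j\|_X\leq\liminf_n\|u_n\|_X$). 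Without these two steps the identity for $\int\zeta(u_n)\,dx$ in the infinite-profile case is not established; once they are added, your argument coincides with the paper's proof.
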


\begin{proof}
Fix any $q \in (p_0,p_0^*)$. 
Since $(u_n)_{n \in \N}$ is bounded in $X$, up to a subsequence, we may assume $u_n \rightharpoonup \wt u_0 $ 
weakly in $X$, $u_n \to u_0$ a.e. $\R^N$ and set 
\[
v^0_n := u_n -\wt u_0, \quad 
d_0 := \limsup_{n\to\infty} \sup_{y \in \Z^N} \int_{y+Q} |v^0_n|^{q} \, dx, \quad Q := [0,1]^N.
\]
If $d_0=0$, then we may put $K=0$ and $y_n^0 = 0$. The convexity of $\Psi_0$ and \cref{lem:lions,lem:BL} yield 
\[
	\liminf_{n \to \infty} \Psi_0(u_n) \geq \Psi_0(\wt u_0), \quad 
	\lim_{n \to \infty} \int_{\R^N} \zeta (u_n) \, dx = \int_{\R^N} \zeta (\wt u_0) \, dx 
	+ \lim_{n \to \infty} \int_{\R^N} \zeta (v^0_n ) \, dx 
	= \int_{\R^N} \zeta (\wt u_0) \, dx  . 
\]
Hence the assertions hold.

On the other hand, when $d_0>0$, up to a subsequence, 
we may choose $(y_n^1)_{n \in \N} \subset \R^N$ such that 
\[
\int_{y_n^1 + Q} |v^0_n|^{q} \, dx \to d_0 > 0.
\]
It is easily seen that $|y_n^1| \to \infty$ and we may also suppose 
$v^0_n(\cdot + y_n^1) \rightharpoonup \wt u_1 \not \equiv 0$ weakly in $X$ as well as 
$v^0_n (\cdot + y_n^1) \to \wt u_1$ a.e. on $\R^N$.  
Clearly, 
\[
d_0 = \int_{Q} |\wt u_1|^{q} \, dx. 
\]
We set 
\[
v^1_n := v^0_n - \wt u_1 (\cdot - y^1_n) = u_n - \sum_{j=0}^1 \wt u_j (\cdot - y^j_n), \quad 
d_1 := \limsup_{n \to \infty} \sup_{y \in \Z^N} \int_{ y + Q } |v^1_n|^{q} \, dx. 
\]
Notice that \cref{lem:BL} gives 
\[
\int_{\R^N} \zeta(u_n) \, dx = \int_{\R^N} \zeta ( \wt u_0 ) \, dx + \int_{\R^N} \zeta ( v^0_n ) \, dx  + o(1)
= \int_{\R^N} \zeta ( \wt u_0 ) \, dx + \int_{\R^N} \zeta ( \wt u_1 ) \, dx  
+ \int_{\R^N} \zeta ( v^1_n ) \, dx  + o(1).
\]

When $d_1 = 0$, \cref{lem:lions} implies 
\[
\lim_{n \to \infty} \int_{\R^N} \zeta (u_n) \, dx = \sum_{j=0}^1 \int_{\R^N} \zeta ( \wt u_j ) \, dx. 
\]
For the assertion on $\Psi_0$, let $R>0$ be arbitrary and for a domain $\Omega \subset \R^N$ write 
\[
\Psi_{0,\Omega} (u) := 
\begin{dcases} 
	\int_\Omega 1 - \sqrt{1 - |\nabla u|^2} \, dx & \text{(the zero mass case)},\\
	\int_\Omega 1 - \sqrt{1 - |\nabla u|^2} + a u^2 \, dx & \text{(the positive mass case)}.
\end{dcases}
\]
Since $y^0_n = 0$, $|y_n^1| \to \infty$ and $ u_n(  \cdot + y^j_n) \rightharpoonup \wt u_j$ weakly in $X$, we obtain 
\[
\liminf_{n\to\infty} \Psi_0(u_n) \geq \liminf_{n \to \infty} \sum_{j=0}^1 \Psi_{0, B(y^j_n, R) } (u_n) 
\geq \sum_{j=0}^1 \liminf_{n \to \infty} \Psi_{0,B(0,R)} \left( u_n(\cdot + y_n^j) \right)
\geq \sum_{j=0}^1 \Psi_{0,B(0,R)}  (\wt u_j). 
\]
Letting $R \to \infty$ yields
\[
\liminf_{n \to \infty} \Psi_0(u_n) \geq \sum_{j=0}^1 \Psi_0(\wt u_j)
\]
and we complete the proof by setting $K=1$.

Now let us consider the case $d_1 > 0$. Since $|y_n^1| \to \infty$ and $v_n^1 = v_n^0 - \wt u_1 (\cdot - y_n^1)$, it is not difficult to see 
\[
d_1 \leq d_0. 
\]
After taking a subsequence if necessary, we may choose $(y^2_n)_{n \in \N} \subset \R^N$ so that 
\[
\int_{y^2_n + Q} |v^1_n|^{q}  \, dx \to d_1 > 0.
\]
Then $|y^2_n - y_n^j| \to \infty$ for $j=0,1$ and $v^1_n (\cdot + y^2_j) \rightharpoonup \wt u_2 \not \equiv 0$ weakly in $X$ 
as well as $v^1_n (\cdot + y^2_n) \to \wt u_2$ a.e. $\R^N$. Moreover, $\int_Q \left| \wt u_2 \right|^q dx = d_1$ holds. 
We define 
\[
v_n^2 := v_n^1 - \wt u_2 ( \cdot - y^2_n), \quad d_2 := \limsup_{n \to \infty} \sup_{y \in \Z^N} \int_{y +Q  } |v_n^2|^{q} \, dx. 
\]
\cref{lem:BL} gives 
\[
\int_{\R^N} \zeta ( v_n^1 ) \, dx = \int_{\R^N} \zeta (\wt u_2 ) \, dx + \int_{\R^N} \zeta ( v_n^2 ) \, dx + o(1), \quad 
\int_{\R^N} \zeta (u_n) \, dx = \sum_{j=0}^2 \zeta (\wt u_j) \, dx + \int_{\R^N} \zeta (v_n^2) \, dx + o(1). 
\]
If $d_2=0$, then we set $K=2$ and may prove the assertions similarly in the case $K = 1$.

On the other hand, if $d_2>0$, then by $|y_n^1| \to \infty$, $|y_n^1 - y_n^2| \to \infty$ and $v^2_n = v_n^1 - \wt u_2 (\cdot - y_n^2)$, 
$d_2 \leq d_1$ holds. Furthermore, we find $(y^3_n)_{n \in \N}$ and $\wt u_3 \not \equiv 0$ such that 
\[
v^2_n (\cdot + y_n^3) \rightharpoonup \wt u_3 \not \equiv 0, \quad 
d_2 = \lim_{n \to \infty} \int_{y^3_n + Q} |v^2_n|^{q} \, dx = \int_{Q} | \wt u_3 |^{q} \, dx, \quad |y_n^3-y_n^j| \to\infty \ (j=0,1,2). 
\]
By setting 
\[
v^3_n := v^2_n - \wt u_3 (\cdot - y^3_n), \quad d_3 := \limsup_{n \to \infty} \sup_{y \in \Z^N} \int_{y+Q} |v^3_n|^{q} \, dx, 
\]
we repeat the same procedure. Thus, may define 
$(d_k)$, $(v^k_n)_{n \in \N}$, $(y^k_n)_{n \in \N}$ and $(\wt u_j)_{0 \leq j \leq k}$ for each $k \geq 1$. 
If $d_k = 0$ holds for some $k \in \N$, then the procedure is terminated and 
the assertions hold for $K:=k$. Therefore, it suffices to consider the case $d_k>0$ for all $k \geq 1$. 
Remark that for any $k \in \N$, 
\[
\int_{Q} | \wt u_{k+2} |^{q} \, dx = d_{k+1} \leq d_k = \int_{Q} |\wt u_{k+1} |^{q} \, dx. 
\]

We next claim that 
\begin{equation}\label{eq:sum-c_k}
	\sum_{j=0}^\infty d_j < \infty. 
\end{equation}
In fact, by the classical Brezis--Lieb lemma (or \cref{lem:BL}), for every $k \in \N$, we have 
\[
\| u_n \|_{q}^{q} = \sum_{j=0}^{k+1} \| \wt u_j \|_{q}^{q} + \| v^{k+1}_j \|_{q}^{q} + o(1). 
\]
Therefore, by $q \in (p_0,p_0^*)$, for each $k \in \N$, 
\[
\sum_{j=0}^{k+1} d_j = \sum_{j=0}^{k} \int_{Q} | \wt u_{j+1} |^{q} \, dx 
\leq \liminf_{n \to \infty} \left[ \| u_n \|_{q}^{q} - \| \wt u_0 \|_{q}^{q}  - \| v^{k+1}_j \|_{q}^{q} \right] \leq \sup_{n \geq 1} \| u_n \|_{q}^{q} < \infty. 
\]
Since $k \in \N$ is arbitrary, \eqref{eq:sum-c_k} holds. In particular, $d_k \to 0$ holds as $k \to \infty$.

To apply \cref{lem:lions} for $(v^k_n)_{n \in \N}$ uniformly with respect to $k \geq 1$, we prove that for every $k \in \N$, 
\begin{equation}\label{bdd-vkn}
	\limsup_{n \to \infty} \| v^k_n \|_X \leq 2 \limsup_{n \to \infty} \| u_n \|_X. 
\end{equation}
For this purpose, let $k \geq 1$. Since 
\[
v^k_n (x) = u_n(x) - \sum_{j=0}^k \wt{u}_j (x-y_n^j), \quad |j_1-j_2| \to \infty \ \ (j_1\neq j_2),
\]
it follows that 
\[
\left\| \sum_{j=0}^k \wt{u}_j (\cdot - y_n^j) \right\|_X \to \sum_{j=0}^k \| \wt{u}_j \|_X.
\]
By a similar argument to $\Psi_0$, we get 
\[
\sum_{j=0}^k \| \wt{u}_j \|_X \leq \liminf_{n \to \infty} \| u_n \|_X. 
\]
Thus, 
\[
\limsup_{n \to \infty} \| v^k_n \|_X 
\leq \limsup_{n \to \infty} \| u_n \|_X + \limsup_{n \to \infty} \left\| \sum_{j=0}^k \wt{u}_j (\cdot - y^j_n) \right\|_X 
\leq 2 \limsup_{n \to \infty} \| u_n \|_X
\]
and \eqref{bdd-vkn} holds.

By \cref{lem:lions}, \eqref{bdd-vkn} and 
\[
\int_{\R^N} \zeta (u_n) \, dx = \sum_{j=0}^k \int_{\R^N} \zeta (\wt u_j) \, dx + \int_{\R^N} \zeta ( v^k_n ) \, dx + o(1), 
\]
for any $\varepsilon >0$ there exist $C_\varepsilon>0$ and $\theta = (q-p_0)/q \in (0,1)$, which does not depend on $k$, such that 
for each $k \geq 1$, 
\[
\limsup_{n \to \infty}\int_{\R^N} \left| \zeta (v^k_n) \right| dx 
\leq \varepsilon + C_\varepsilon d_k^\theta. 
\]
From $d_k \to 0$ as $k \to \infty$, choose $k_\varepsilon \in \N$ so large that 
$C_\varepsilon d_{k}^\theta \leq \varepsilon$ for all $k \geq k_\varepsilon$. Thus, if $k \geq k_\varepsilon$, then 
\[
\limsup_{n \to \infty}\left| \int_{\R^N} \zeta (u_n) \, dx - \sum_{j=0}^k \int_{\R^N} \zeta (\wt u_j) \, dx   \right| \leq 2\varepsilon.
\]
Since we may assume that the limit $\lim_{n \to \infty} \int_{\R^N} \zeta (u_n) \, dx$ exists, 
the above inequality implies 
\[
\lim_{n \to \infty} \int_{\R^N} \zeta(u_n) \, dx = \sum_{j=0}^\infty \int_{\R^N} \zeta (\wt u_j) \, dx. 
\]
For the assertion on $\Psi_0$, for any $k \in \N$ and $R>0$, we may prove 
\[
\liminf_{n \to \infty} \Psi_0(u_n) \geq \sum_{j=0}^k \Psi_{0,B(0,R)}( \wt u_j ).
\]
By letting $R \to \infty$ and then $k \to \infty$, the desired assertion holds. 
\end{proof}

\section{The zero mass case}
\label{Sec:Zeromass}

In this section, the zero mass case is considered and \cref{th:1,th:2} are proved. 
From now on, $N \geq 3$ and \eqref{F1}--\eqref{F6} are always supposed. 
We first investigate properties of $M_0$ and $\cM_0$ defined in \eqref{eq-def-funcM0} and \eqref{eq-def-setM0}, 
and then prove \cref{th:1,th:2}.

\subsection{The projection into the Poho\v{z}aev set}
\label{subsect:M}

For $u \in D(\Psi) \setminus \{0\}$ and $\theta > 0$, we consider the scaling 
\[
u_\theta := \theta u \left( \frac{\cdot}{\theta} \right).
\]
It is immediate to see 
\[
\| \nabla u_\theta \|_q^q = \theta^{N} \| \nabla u \|_q^q \quad \mbox{ for } q \geq 1, \quad \|\nabla u_\theta \|_\infty = \|\nabla u\|_\infty.
\]
From the above observation we see that $u_\theta \to 0$ in $X_0$ as $\theta \to 0$. By a direct calculation we obtain
\begin{equation}\label{eq-diff-theta}
	\begin{aligned}
		I(u_\theta) &= \theta^N \left( \Psi(u) - \Phi(\theta u) \right), \\
		\frac{d}{d\theta} I(u_\theta) &= N \theta^{N-1} \left( \Psi(u) - \Phi(\theta u) - \frac{1}{N} \Phi'(\theta u)(\theta u) \right) = N \theta^{N-1} \left( \Psi(u) - \int_{\R^N} H(\theta u) \, dx \right).
	\end{aligned}
\end{equation}

To study properties of the Poho\v{z}aev set $\cM_0$, we first prove 
\begin{Lem}\label{Lem-Theta}
	For each $\alpha \in (0,\infty)$ and $u \in X_0 \setminus \{0\}$, there exists a unique $\Theta = \Theta(\alpha,u) \in (0,\infty)$ such that 
	\[
	\alpha = \int_{\R^N} H(\Theta u) \, dx. 
	\]
	Moreover, $\Theta \in \cC^1 ( (0,\infty) \times X_0 \setminus \{0\} ; (0,\infty) )$. 
\end{Lem}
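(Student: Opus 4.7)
The plan is to fix $u \in X_0 \setminus \{0\}$, analyze the scalar map $\theta \mapsto h(\theta) := \int_{\R^N} H(\theta u) \, dx$, show it is a strictly increasing $\cC^1$-diffeomorphism of $(0,\infty)$ onto $(0,\infty)$, and then apply the implicit function theorem to the joint map to promote the inverse $\Theta = h^{-1}(\alpha)$ to a $\cC^1$ function of $(\alpha,u)$.

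First I would verify that $(\theta,u) \mapsto \int_{\R^N} H(\theta u)\, dx$ is well defined and of class $\cC^1$ on $(0,\infty)\times (X_0\setminus\{0\})$. Integrating \eqref{F2} yields $|f(s)|\lesssim |s|^{2^*-1}$ on a neighbourhood of $0$, hence $|H(s)|\lesssim |s|^{2^*}$ there; combined with the embedding $X_0\subset W^{1,2^*}(\R^N)\cap \cC_0(\R^N)$, which in particular bounds every $\theta u$ in $L^\infty\cap L^{2^*}$, this makes the integral finite. Regularity in $u$ is inherited from the $\cC^1$-character of $\Phi$ and of $u \mapsto \int_{\R^N} f(u)u\, dx$ on $X_0$, which is standard under \eqref{F1}--\eqref{F2} thanks to \eqref{def:C0}.

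Then I would establish the monotonicity and the boundary behaviour of $h$. By \eqref{F3}, $sH'(s)>0$ for $s\neq 0$ and $H(0)=0$, so $H\geq 0$ with strict inequality off the zero set, and differentiation under the integral sign gives
\[
h'(\theta) \;=\; \int_{\R^N} H'(\theta u)\, u\, dx \;=\; \frac{1}{\theta}\int_{\R^N} H'(\theta u)\,(\theta u)\, dx \;>\; 0,
\]
since the integrand is pointwise nonnegative and strictly positive on the open set $\{u\neq 0\}$, which has positive measure. Dominated convergence using the bound $|H(\theta u)|\lesssim \theta^{2^*}|u|^{2^*}$ valid for $\theta$ small yields $h(\theta)\to 0$ as $\theta\to 0^+$, while \eqref{F4} combined with Fatou's lemma on any subset of $\{u\neq 0\}$ of positive measure gives $h(\theta)\to\infty$ as $\theta\to\infty$. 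The intermediate value theorem together with the strict monotonicity therefore produces a unique $\Theta(\alpha,u) \in (0,\infty)$ solving $h(\Theta)=\alpha$.

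Finally, to upgrade $\Theta$ to a $\cC^1$ map I would apply the implicit function theorem to
\[
G(\Theta,\alpha,u) \;:=\; \alpha - \int_{\R^N} H(\Theta u)\, dx,
\]
which is $\cC^1$ on $(0,\infty)\times (0,\infty)\times (X_0\setminus\{0\})$ by the first step; at any zero one has $\partial_\Theta G = -h'(\Theta)<0$, so the theorem furnishes a local $\cC^1$ inverse which by uniqueness coincides globally with $\Theta(\alpha,u)$. The only genuinely delicate step is justifying the $\theta\to 0^+$ limit despite the absence of any global growth restriction on $f$; this is resolved by the $L^\infty$-bound inherent in $X_0\subset\cC_0(\R^N)$, which confines the argument of $H$ to a fixed bounded interval where \eqref{F2} supplies the sharp $|s|^{2^*}$ dominant needed for dominated convergence.
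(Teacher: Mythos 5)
Your proposal is correct and follows essentially the same route as the paper: both establish $\cC^1$ regularity of $(\alpha,u,\theta)\mapsto \alpha-\int_{\R^N}H(\theta u)\,dx$ via $X_0\subset L^\infty(\R^N)$ and \eqref{F1}--\eqref{F2}, strict monotonicity in $\theta$ from \eqref{F3}, the limits $0$ and $\infty$ as $\theta\to 0^+$ and $\theta\to\infty$ from \eqref{F2}--\eqref{F4}, and then the implicit function theorem using $\partial_\theta J<0$ at the zero. No substantive differences to report.
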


\begin{proof}
Define
\[
J(\alpha, u, \theta) := \alpha - \int_{\R^N} H ( \theta u) \, dx :  (0,\infty) \times X_0 \setminus \{0\} \times (0,\infty) \to \R.
\]
In view of $X_0 \subset L^\infty(\R^N)$ and \eqref{F1}--\eqref{F2}, $J$ is of class $\cC^1$. 
For any fixed $u \in X_0 \setminus \{0\}$, it follows from \eqref{F3} that 
\begin{equation}\label{eq-diff-H}
\frac{d}{d \theta} \int_{\R^N} H(\theta u) \, dx = \theta^{-1} \int_{\R^N} H'(\theta u) \theta u \, dx > 0. 
\end{equation}
Furthermore, by \eqref{F2}--\eqref{F4} and $u \in X_0 \setminus \{0\}$, 
\[
\lim_{\theta \to 0} \int_{\R^N} H(\theta u) \, d x = 0, \quad \lim_{\theta \to \infty}\int_{\R^N} H(\theta u) \, dx = \infty.
\]
Therefore, for each $\alpha \in (0,\infty)$ and $u \in X_0 \setminus \{0\}$, 
there exists a unique $\Theta = \Theta(\alpha,u) \in (0,\infty)$ such that $J(\alpha, u, \Theta) = 0$. 
Since 
\[
\frac{\partial J}{\partial \theta} (\alpha, u , \Theta) = - \Theta^{-1} \int_{\R^N} H'( \Theta u ) \Theta u \, dx < 0,
\]
the regularity of $\Theta$ follows from the implicit function theorem. 
\end{proof}

By \cref{Lem-conti-Psi,Lem-Theta}, we may construct a continuous projection from $D(\Psi) \setminus \{0\}$ onto $\cM_0$: 

\begin{Lem}\label{lem:projection}
For every $u \in D(\Psi) \setminus \{0\}$ there is unique $\theta = \theta(u) > 0$ such that $u_\theta \in \cM_0$. Moreover the map
\[
m(u) := u_{\theta(u)} : D(\Psi) \setminus \{0\} \to \cM_0
\]
is continuous, where the topology on $D(\Psi) \setminus \{0\}$ is induced from $X_0$.
\end{Lem}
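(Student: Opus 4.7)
The plan is to reduce the Poho\v{z}aev constraint to the scalar equation already solved in \cref{Lem-Theta}. First, by a direct change of variables (or equivalently by integrating the second line of \eqref{eq-diff-theta}) one obtains the scaling identity
\[
M_0(u_\theta) = \theta^N \left( \Psi(u) - \int_{\R^N} H(\theta u) \, dx \right),
\]
so $u_\theta \in \cM_0$ if and only if $\Psi(u) = \int_{\R^N} H(\theta u) \, dx$. A preliminary observation is that $\Psi(u) > 0$ for every $u \in D(\Psi) \setminus \{0\}$: since $u \in X_0 \setminus \{0\}$ vanishes at infinity by \eqref{eq-X-emb}, $u$ is non-constant, so $\nabla u \not\equiv 0$, and the elementary inequality $1 - \sqrt{1-t^2} \geq t^2/2$ on $[-1,1]$ yields $\Psi(u) \geq \tfrac{1}{2} \|\nabla u\|_2^2 > 0$. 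Applying \cref{Lem-Theta} with $\alpha := \Psi(u)$ then produces a unique $\theta(u) := \Theta(\Psi(u), u) \in (0,\infty)$ with $u_{\theta(u)} \in \cM_0$, and the uniqueness assertion of the lemma follows immediately from that of $\Theta$.

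For the continuity of $m$, I would fix a sequence $(u_n)_{n \in \N} \subset D(\Psi) \setminus \{0\}$ converging in $X_0$ to some $u \in D(\Psi) \setminus \{0\}$ and combine two ingredients already available. First, \cref{Lem-conti-Psi} yields $\Psi(u_n) \to \Psi(u) > 0$. Second, the $\cC^1$-regularity of $\Theta$ established in \cref{Lem-Theta} gives
\[
\theta(u_n) = \Theta(\Psi(u_n), u_n) \to \Theta(\Psi(u), u) = \theta(u).
\]

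Writing $\theta_n := \theta(u_n)$, $\theta := \theta(u)$, and using $\nabla m(u_n)(x) = \nabla u_n(x/\theta_n)$, the triangle inequality together with a change of variables gives for each $q \in \{2,\, 2j_0\}$
\[
\|\nabla m(u_n) - \nabla m(u)\|_q \leq \theta_n^{N/q}\, \|\nabla u_n - \nabla u\|_q + \|\nabla u(\cdot/\theta_n) - \nabla u(\cdot/\theta)\|_q.
\]
The first term vanishes because $\theta_n \to \theta$ and $u_n \to u$ in $X_0$; the second vanishes by the classical continuity of dilations in $L^q(\R^N)$. Combined with the analogous bound on $\|m(u_n) - m(u)\|_{2^*}$ (if needed in the definition of the $X_0$-norm), this yields $m(u_n) \to m(u)$ in $X_0$. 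The argument is largely routine once the earlier lemmas are in place; the only mild subtleties I foresee are the verification that $\Psi(u) > 0$ (so that \cref{Lem-Theta} can be invoked with a strictly positive $\alpha$) and the final $L^q$-convergence under the nonlinear scaling $u_\theta$, both of which are handled above.
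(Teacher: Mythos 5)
Your proof is correct and follows essentially the same route as the paper: the scaling identity reduces the constraint $M_0(u_\theta)=0$ to the scalar equation $\Psi(u)=\int_{\R^N}H(\theta u)\,dx$, so that $\theta(u)=\Theta(\Psi(u),u)$ from \cref{Lem-Theta}, with continuity coming from \cref{Lem-conti-Psi} and the $\cC^1$-regularity of $\Theta$. The only difference is that you spell out two points the paper leaves implicit — the positivity $\Psi(u)>0$ needed to invoke \cref{Lem-Theta}, and the continuity of $(\theta,u)\mapsto u_\theta$ in $X_0$ via the dilation estimate — both of which are handled correctly.
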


\begin{proof}
Recall \eqref{eq-def-funcM0} and \eqref{eq-def-setM0}. 
Since 
\[
M_0(u_\theta) = \theta^N \left( \Psi(u) - \int_{\R^N} H(\theta u) \, dx \right),
\]
the desired map $m$ is given by setting $\theta(u) \coloneq \Theta ( \Psi(u) , u )$ 
where $\Theta$ appears in \cref{Lem-Theta}. 
Since $\Psi : D(\Psi) \setminus \{0\} \to \R$ is continuous by \cref{Lem-conti-Psi}, 
$\theta(\cdot)$ is continuous, and hence so is $m$. 
\end{proof}

\begin{Rem}\label{Rem:max}
	Let $u \in D(\Psi) \setminus \{0\}$. 
	From \eqref{eq-diff-theta} and \eqref{eq-diff-H}, 
	$\theta = \theta(u)$ is the only maximum of the function $[0,\infty) \ni \theta \mapsto I(u_\theta)$ with $I(u_{\theta(u)}) > 0$. 
\end{Rem}

\subsection{The ground state level}

In this subsection we are going to show $c_0 = c_{MP} > 0$ where $c_{0,MP}$ is the mountain pass level
\begin{equation}\label{eq:MP}
c_{0,MP} := \inf_{\gamma \in \Ga} \sup_{\tau \in [0,1]} I(\gamma(\tau)), \quad \Ga := \left\{ \gamma \in \cC([0,1]; X_0) \ : \ \gamma(0)=0, \ I(\gamma(1)) < 0 \right\}.
\end{equation}

\begin{Rem}\label{rem:mp}
Fix any $\varphi \in \cC^\infty_0 (\R^N)$ so that $\| \nabla \varphi \|_\infty < 1$. 
Then for each $\theta > 0$, $\varphi_\theta = \theta \varphi(\cdot/ \theta ) \in D(\Psi)$ holds. 
Moreover, as in \cref{subsect:M}, we may prove $\| \varphi_\theta \|_{X_0} \to 0$ as $\theta \to 0$ and 
$I(\varphi_\theta) \to - \infty$ as $\theta \to \infty$. 
Therefore, for a sufficiently large $T>0$, $\gamma_0 (\theta) := \varphi_{T\theta} \in \Gamma$ and 
$c_{0,MP} < \infty$ holds. 
Moreover, it is clear that 
\[
c_{0,MP} = \inf_{\gamma \in \wt \Gamma} \sup_{\tau \in [0,1]} I(\gamma(\tau)), \quad 
\wt \Ga := \left\{ \gamma \in \cC([0,1]; X_0) \ : \ \gamma([0,1]) \subset D(\Psi), \  \gamma(0)=0, \ I(\gamma(1)) < 0 \right\}.
\]
We also remark that for any $\gamma \in \wt \Gamma$, the function $[0,1] \ni t \mapsto I(\gamma(t)) \in \R$ is continuous 
by virtue of \cref{Lem-conti-Psi}. 
\end{Rem}

\begin{Lem}\label{Lem:c0-c0MP}
There holds $c_0 = c_{0,MP} > 0$.
\end{Lem}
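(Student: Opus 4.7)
The plan is to prove the two inequalities $c_{0,MP} \leq c_0$ and $c_{0,MP} \geq c_0$ separately, and then derive $c_0 > 0$ from a uniform lower bound on $\cM_0$. I would work throughout with the class $\wt \Gamma$ of paths taking values in $D(\Psi)$, since by \cref{rem:mp} this gives the same mountain pass value.

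For $c_{0,MP} \leq c_0$, fix $u \in \cM_0$ and consider the rescaling $\tau \mapsto u_\tau$. From \eqref{eq-diff-theta} and \eqref{F4},
\[
\frac{d}{d\tau} I(u_\tau) = N \tau^{N-1}\Bigl(\Psi(u) - \int_{\R^N} H(\tau u)\, dx\Bigr) \to -\infty \quad \text{as } \tau \to \infty,
\]
so $I(u_T) < 0$ for $T$ sufficiently large. Then $\gamma(t) := u_{Tt}$ lies in $\wt \Gamma$, and \cref{Rem:max} identifies the unique maximum of $\tau \mapsto I(u_\tau)$ at $\tau = \theta(u) = 1$, the latter because $u \in \cM_0$. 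Hence $\max_{t \in [0,1]} I(\gamma(t)) = I(u)$, and taking the infimum over $u \in \cM_0$ yields $c_{0,MP} \leq c_0$.

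For $c_{0,MP} \geq c_0$, the key is that every $\gamma \in \wt \Gamma$ meets $\cM_0$. The functional $M_0$ is continuous on $D(\Psi)$: $\Psi$ is continuous there by \cref{Lem-conti-Psi}, while \eqref{eq-X-emb} and \eqref{F1}--\eqref{F2} yield the continuity of $u \mapsto \int_{\R^N} H(u)\, dx$ on $X_0$. Since $f(s)s \geq 0$ follows from \eqref{F1} and \eqref{F5},
\[
M_0(\gamma(1)) = I(\gamma(1)) - \tfrac{1}{N}\int_{\R^N} f(\gamma(1))\gamma(1)\, dx \leq I(\gamma(1)) < 0.
\]
Combining $\Psi(u) \geq \tfrac{1}{2}\|\nabla u\|_2^2$ from \eqref{ineq-t^2}, the decay $|H(s)| = o(|s|^{2^*})$ as $s \to 0$ from \eqref{F2}, the embedding $X_0 \subset \cC_0(\R^N)$, and the Sobolev inequality produces, for $\|u\|_{X_0}$ small,
\[
M_0(u) \geq \tfrac{1}{2}\|\nabla u\|_2^2 - C_1\, \eta(\|u\|_\infty)\, \|\nabla u\|_2^{2^*} \geq \tfrac{1}{4}\|\nabla u\|_2^2,
\]
with $\eta(r) \to 0^+$ as $r \to 0^+$; in particular $M_0(u) > 0$ whenever such $u$ is nonzero. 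Setting $t_0 := \sup\{t \in [0,1] : \gamma(t) = 0\}$, one has $t_0 < 1$ (since $I(\gamma(1)) < 0 = I(0)$) and $\gamma(t) \neq 0$ on $(t_0, 1]$. For $t$ just above $t_0$ the estimate gives $M_0(\gamma(t)) > 0$, while $M_0(\gamma(1)) < 0$, so the intermediate value theorem yields $t^* \in (t_0, 1)$ with $M_0(\gamma(t^*)) = 0$ and $\gamma(t^*) \neq 0$, i.e.\ $\gamma(t^*) \in \cM_0$. Therefore $\sup_t I(\gamma(t)) \geq I(\gamma(t^*)) \geq c_0$.

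Finally, for $c_0 > 0$ the same near-zero estimate forces $\|\nabla u\|_2 \geq \delta$ on $\cM_0$ for some uniform $\delta > 0$, for otherwise $M_0(u) > 0$ would contradict $M_0(u) = 0$. On $\cM_0$, the identity $\int_{\R^N} H(u)\, dx = \Psi(u)$ together with \eqref{F6} and $f(s)s \geq 0$ gives $\Phi(u) \leq C\int f(u)u\, dx$; substituting into $\Psi(u) = \Phi(u) + \tfrac{1}{N}\int f(u)u\, dx$ produces $\int f(u)u\, dx \geq N\Psi(u)/(1+CN)$, whence $I(u) = \tfrac{1}{N}\int f(u)u\, dx \geq \Psi(u)/(1+CN) \geq \delta^2/\bigl(2(1+CN)\bigr) > 0$. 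The delicate point is the intermediate value step: one must rule out that the zero of $M_0 \circ \gamma$ produced by the theorem is the trivial value $0 \notin \cM_0$, and this is handled by restricting attention to the interval past the last visit of $\gamma$ to the origin.
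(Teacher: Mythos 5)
Your argument is correct, and for the equality $c_0=c_{0,MP}$ it follows the paper's proof essentially verbatim: the dilation path $t\mapsto u_{Tt}$ together with \cref{Rem:max} gives $c_{0,MP}\le c_0$, and the reverse inequality is obtained, as in the paper, by showing that every $\gamma\in\wt\Gamma$ crosses $\cM_0$ (positivity of $M_0$ just past the last zero $t_0$ of $\gamma$, the bound $M_0(\gamma(1))\le I(\gamma(1))<0$ from $f(s)s\ge 0$, continuity of $M_0\circ\gamma$ via \cref{Lem-conti-Psi}, then the intermediate value theorem on $(t_0,1]$). Where you genuinely diverge is the strict positivity: the paper proves $c_{0,MP}>0$ directly, estimating $I(u)\gtrsim \|u\|_{X_0}^{2j_0}$ near the origin through the expansion \eqref{exp-sq-1t^2} and $|F(s)|\lesssim |s|^{2^*}$ (this uses only \eqref{F2}), whereas you prove $c_0>0$ on the constraint itself, combining a uniform lower bound for the norm on $\cM_0$ with \eqref{F6} and the identity $I(u)=\tfrac1N\int_{\R^N} f(u)u\,dx$ to get $I\ge \Psi/(1+CN)$ there; both are legitimate under the standing hypotheses, the paper's route being independent of \eqref{F6}, yours giving in addition that $\cM_0$ is bounded away from $0$ with a quantitative lower bound for $I$ on it. One step you should make explicit: your near-zero estimate for $M_0$ was derived for $\|u\|_{X_0}$ small, but on $\cM_0$ you invoke it assuming only $\|\nabla u\|_2$ small. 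This is rescuable precisely because $\cM_0\subset D(\Psi)$: from $\|\nabla u\|_\infty\le 1$ one has $\|\nabla u\|_{2j_0}^{2j_0}\le \|\nabla u\|_2^2$, so small $\|\nabla u\|_2$ does force small $\|u\|_{X_0}$ (and hence small $\|u\|_\infty$ by the embedding), but without this one-line interpolation the conclusion $\|\nabla u\|_2\ge\delta$ on $\cM_0$ does not follow from what you wrote; alternatively, you could conclude only $\|u\|_{X_0}\ge\delta$ on $\cM_0$ and then lower-bound $\Psi$ using either the $\|\nabla u\|_2$ or the $\|\nabla u\|_{2j_0}$ term of \eqref{exp-sq-1t^2}.
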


\begin{proof}
We first prove $c_0 \geq c_{0,MP}$. 
Fix any $u \in \cM_0$. Then we know that $\theta(u) = 1$. 
Consider a path $\gamma_u (\tau) := u_{T_u \tau}$, where $T_u \gg 1$ and $\gamma_u (0) := \lim_{\tau \to 0^+} u_{T_u \tau} = 0$. 
Note that $I(u_{T_u}) < 0$ for sufficiently large $T_u$, hence $\gamma_u \in \Gamma$. 
Since \cref{Rem:max} leads to $ c_{0,MP} \leq  \max_{\tau \in [0,1]} I(\gamma_u(\tau)) = I(u)$ and $u \in \cM_0$ is arbitrary, 
\[
c_{0,MP} \leq \inf_{u \in \cM_0} I(u) = c_0.
\]
To show $c_{0,MP}>0$ we recall that $X_0 \subset L^\infty(\R^N)$ and $|F(s)| \lesssim |s|^{2^*}$ for $|s| \leq 1$ thanks to (\ref{F2}). 
By \eqref{exp-sq-1t^2}, if $\|u\|_{X_0}$ is sufficiently small, then 
\begin{align*}
I(u) &\geq \sum_{j=1}^\infty b_j \| \nabla u\|_{2j}^{2j} - C \|u\|_{2^*}^{2^*} \\
&\geq  b_1 \| \nabla u \|_2^2 + b_{j_0} \|\nabla u\|_{2j_0}^{2j_0} - C\| \nabla u \|_2^{2^*} \\
&= \left(b_1 - C \| \nabla u\|_2^{2^*-2} \right) \| \nabla u\|_2^2 + b_{j_0} \|\nabla u\|_{2j_0}^{2j_0} \\
&\gtrsim  \|u\|^{2j_0}_{X_0}.
\end{align*}
Hence there is a small radius $\rho > 0$ such that $\inf_{\|u\|_{X_0} =\rho_0} I(u) > 0$ and $I(u) \geq 0$ provided $\| u \|_{X_0} \leq \rho_0$. 
Thus, $c_{0,MP} > 0$.

	We next prove $c_0 \leq c_{0,MP}$. By \cref{rem:mp}, as in \cite{JeTa03}, 
it suffices to show $\gamma ([0,1]) \cap \cM_0 \neq \emptyset$ for any $\gamma \in \wt \Gamma$. 
Let $\gamma \in \wt \Gamma$ and $t_0 := \max \{ t \in [0,1] \ : \ \gamma(t) = 0 \} \in [0,1)$. 
For $M_0$ in \eqref{eq-def-setM0}, as in the above, for sufficiently small $\delta > 0$, we observe that 
\[
M_0(\gamma(t)) > 0 \quad \text{for all $t \in (t_0, t_0+\delta)$}. 
\]
On the other hand, since (\ref{F5}) implies $f(s) s \geq 0$ for any $s \in \R$, it follows that 
$\Phi'(u) u \geq 0$ for every $u \in X_0$ and 
\[
M_0(\gamma(1)) = I(\gamma(1)) - \frac{1}{N}\Phi'(\gamma(1)) \gamma(1) \leq I(\gamma(1)) < 0.
\]
Since the function $[0,1] \ni t \mapsto M_0(\gamma(t)) \in \R$ is continuous by \cref{Lem-conti-Psi}, 
there exists $t_1 \in (t_0,1)$ such that $M_0(\gamma(t_1)) = 0$, which yields 
$\gamma(t_1) \in \cM_0$ and $\gamma([0,1]) \cap \cM_0 \neq \emptyset$. Thus we complete the proof. 
\end{proof}

\subsection{Minimizers on the Poho\v{z}aev set are critical points}

The aim of this subsection is to prove that every minimizer corresponding to $c_0$ is a critical point of $I$, and 
hence \cref{l:RegCritP} implies that it is a classical solution of \eqref{eq:BI}. 

\begin{Lem}\label{lem:minimizers-are-critical}
Let $u_0 \in \cM_0$ satisfy $I(u_0)=c_0$. Then $u_0$ is a critical point of $I$.
\end{Lem}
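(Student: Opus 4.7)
The plan is to derive the Szulkin-type subdifferential inequality $\Psi(v)-\Psi(u_0)\ge \Phi'(u_0)(v-u_0)$ for every $v\in X_0$ by testing the minimality of $u_0$ against the projected linear interpolation between $u_0$ and an arbitrary competitor $v$. Since $\Psi(v)=+\infty$ whenever $v\in X_0\setminus D(\Psi)$, one may fix $v\in D(\Psi)$. By convexity of $D(\Psi)$, $w_t:=(1-t)u_0+tv\in D(\Psi)$ for $t\in[0,1]$; set $\theta(t):=\theta(w_t)$ and $\eta_t:=m(w_t)=(w_t)_{\theta(t)}\in\cM_0$. By \cref{lem:projection}, $\theta$ is continuous with $\theta(0)=1$, and the minimality of $u_0$ yields $I(\eta_t)\ge c_0=I(u_0)=I(\eta_0)$ for every $t\in[0,1]$.

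Using the identity $I(\eta_t)=\theta(t)^N\bigl[\Psi(w_t)-\int_{\R^N}F(\theta(t)w_t)\,dx\bigr]$ and the convexity bound $\Psi(w_t)\le (1-t)\Psi(u_0)+t\Psi(v)$, we introduce the smooth upper envelope
\[
\wt G(\lambda,t):=\lambda^N\bigl[(1-t)\Psi(u_0)+t\Psi(v)\bigr]-\lambda^N\int_{\R^N}F(\lambda w_t)\,dx,
\]
so that $I(\eta_t)\le \wt G(\theta(t),t)$ and $\wt G(1,0)=I(u_0)$. Since $f\in\cC^1$ (hence $F\in \cC^2$), $X_0\subset L^\infty(\R^N)$, and $w_t$ is linear in $t$, a standard dominated-convergence argument based on \eqref{F2} shows that $\wt G$ is of class $\cC^2$ on a neighborhood of $(1,0)$. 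A direct computation using $u_0\in\cM_0$ and $H=F+\tfrac1N f(\cdot)(\cdot)$ yields
\[
\partial_\lambda\wt G(1,0)=NM_0(u_0)=0,\qquad \partial_t\wt G(1,0)=\Psi(v)-\Psi(u_0)-\Phi'(u_0)(v-u_0),
\]
\[
\partial_\lambda^2\wt G(1,0)=-N\int_{\R^N}H'(u_0)u_0\,dx<0,
\]
where the strict negativity in the last line is a consequence of (\ref{F3}) combined with $u_0\not\equiv 0$.

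By Taylor's theorem with mean-value remainder,
\[
0\le \wt G(\theta(t),t)-\wt G(1,0)=t\,\partial_t\wt G(1,0)+\tfrac{1}{2}\bigl[A(t)(\theta(t)-1)^2+2B(t)(\theta(t)-1)t+C(t)t^2\bigr],
\]
where $A(t),B(t),C(t)$ are second partial derivatives of $\wt G$ evaluated at some intermediate point on the segment from $(1,0)$ to $(\theta(t),t)$. Continuity of $\theta$ at $0$ together with continuity of the second derivatives of $\wt G$ gives $A(t)\to \partial_\lambda^2\wt G(1,0)<0$ and keeps $B(t),C(t)$ bounded as $t\to 0^+$. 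Hence, for $t$ small, the quadratic form $\xi\mapsto A(t)\xi^2+2B(t)t\xi+C(t)t^2$ in $\xi=\theta(t)-1$ has strictly negative leading coefficient, and its maximum over $\xi\in\R$, attained at $\xi^*=-B(t)t/A(t)$, equals $t^2\bigl[C(t)-B(t)^2/A(t)\bigr]=O(t^2)$. Dividing the displayed inequality by $t>0$ and letting $t\to 0^+$ delivers $\partial_t\wt G(1,0)\ge 0$, which is the required inequality.

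The main obstacle the plan has to overcome is the non-differentiability of $\theta(t)$ at $t=0$: its defining implicit equation $\Psi(w_t)=\int_{\R^N}H(\theta(t)w_t)\,dx$ involves the convex but generally non-$\cC^1$ function $t\mapsto \Psi(w_t)$, whose right-derivative at $0$ may even equal $-\infty$ (as happens, for instance, when $|\nabla u_0|=1$ on a set of positive measure). The argument sidesteps any derivative estimate on $\theta$ by exploiting the strict concavity $\partial_\lambda^2 \wt G(1,0)<0$, which forces the $(\theta(t)-1)^2$-contribution in the smooth upper bound $\wt G$ to remain $O(t^2)$ no matter how slowly $\theta(t)$ approaches $1$; only the continuity of $\theta$ at $0$ provided by \cref{lem:projection} is actually used.
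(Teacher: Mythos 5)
Your proof is correct, and while it starts from the same scaffolding as the paper (interpolate $w_t=(1-t)u_0+tv$ inside the convex set $D(\Psi)$, project onto $\cM_0$ by scaling, and test the minimality of $u_0$ against $I(m(w_t))\ge c_0$ together with the convexity bound $\Psi(w_t)\le(1-t)\Psi(u_0)+t\Psi(v)$), the key mechanism is genuinely different. The paper controls the scaling parameter $\theta(w_{t_n})$ directly: when $\Psi(w_{t_n})\ge\Psi(u_0)$ convexity plus the $\cC^1$-dependence of $\Theta$ give $|\theta_n-1|=O(t_n)$ and a second-derivative bound ($L_1$) closes the argument, while the remaining possibility $|\theta_n-1|/t_n\to\infty$ is excluded by first forcing $\theta_n<1$ (via $\int H'(u_0)u_0\,dx>0$) and then invoking the monotonicity assumption \eqref{F5} to reach a contradiction with $\int f(u_0)u_0\,dx=Nc_0>0$. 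You instead replace $\Psi(w_t)$ by its affine majorant to build the $\cC^2$ envelope $\wt G(\lambda,t)$, check $\partial_\lambda\wt G(1,0)=NM_0(u_0)=0$, $\partial_t\wt G(1,0)=\Psi(v)-\Psi(u_0)-\Phi'(u_0)(v-u_0)$ and, crucially, $\partial_\lambda^2\wt G(1,0)=-N\int_{\R^N}H'(u_0)u_0\,dx<0$ by \eqref{F3}, so that the two-variable Taylor expansion with Lagrange remainder makes the $(\theta(t)-1)$-contribution at most $O(t^2)$ regardless of how slowly $\theta(t)\to1$; only the continuity of $\theta(\cdot)$ from \cref{lem:projection} is needed. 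I verified the computations (in particular the identity $\partial_\lambda^2\wt G(1,0)=-N\int H'(u_0)u_0\,dx$, which uses $M_0(u_0)=0$, and the justification that $(\lambda,t)\mapsto\int F(\lambda w_t)\,dx$ is $\cC^2$ by dominated convergence under \eqref{F1}--\eqref{F2} and $X_0\subset L^{2^*}\cap L^\infty$), and the argument is airtight. What your route buys: no case analysis on the sign of $\Psi(w_t)-\Psi(u_0)$, no rate information on $\theta(t)-1$, and no use of \eqref{F5} in this lemma, with strict concavity in the scaling variable doing all the work; the paper's route, by contrast, uses less regularity bookkeeping on the envelope but pays for it with the delicate exclusion of the regime $|\theta_n-1|/t_n\to\infty$. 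Your argument also transfers essentially verbatim to the positive mass case, where the corresponding inequality $\partial_\theta\cH(u_0,1)>0$ is exactly what \cref{lem:positive-mass-minimizers-crit} establishes.
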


\begin{proof}
Suppose that $u_0 \in \cM$ satisfies $I(u_0) = c_0 > 0$. 
By \cref{Lem-conti-Psi}, there is $r_0 > 0$ such that
\[
\frac12 \Psi(u_0) \leq \Psi(u) \leq 2 \Psi(u_0)
\]
for $u \in \overline{B_{X_0} (u_0, r_0)} \cap D(\Psi)$. We recall that
\[
\frac{d}{d\theta} I(u_\theta) = N \theta^{N-1} \left( \Psi(u) - \int_{\R^N} H(\theta u) \, dx \right)
\]
and therefore we have the following formula
\[
\frac{d^2}{d\theta^2} I(u_\theta) = N(N-1) \theta^{N-2} \left( \Psi(u) - \int_{\R^N} H(\theta u) \, dx \right) - N \theta^{N-2} \int_{\R^N} H'(\theta u)(\theta u) \, dx.
\]
Hence
\[
L_1 := \sup \left\{ \left| \frac{d^2}{d\theta^2} I(u_\theta) \right| \ : \  \frac12 \leq \theta \leq 2, \ u \in \overline{B_X (u_0, r_0)} \cap D(\Psi) \right\} < \infty.
\]
From $ d I(u_\theta) / d \theta |_{\theta = \theta (u)} = 0$, $\theta(u_0) = 1$ 
and the continuity of $\theta(\cdot)$, it follows that for $u \in \overline{B_{X_0} (u_0, r_0)} \cap D(\Psi)$, 
\begin{equation}\label{eq-2ndorder}
	\frac{1}{2} \leq \theta(u) \leq 2, \quad 
	\left| I(u_{\theta(u)}) - I(u) \right| \leq \left| \int_{\theta(u)}^1 \frac{d}{d\theta} I(u_\theta) \, d\theta \right| \leq \frac{L_1}{2} |1 -\theta(u)|^2. 
\end{equation}

To prove that $u_0$ is a critical point of $I$, let $v \in D(\Psi)$ be any point: otherwise, \eqref{def-cri} holds trivially. 
For $t \in (0,1)$, we introduce
\[
v_t := (1-t) u_0 + t v \in D(\Psi). 
\]
The convexity of $\Psi$ implies that for any $ t \in (0,1]$, 
\begin{align}\label{1}
\Psi(v_t)-\Psi(u_0)-\Phi'(u_0)(v_t-u_0) \leq t \left\{\Psi(v)-\Psi(u_0) -\Phi'(u_0)(v-u_0)\right\}.
\end{align}

Let $(t_n) \subset (0,1)$ satisfy $t_n \to 0$. We first consider the case $\Psi(u_0)\leq \Psi(v_{t_n})$ for all $n$. 
Then it is clear that $v_{t_n} \to u_0$ in $X_0$ and from the convexity of $\Psi$, 
\begin{align}\label{2}
0 \leq \Psi(v_{t_n})-\Psi(u_0) \leq (1-t_n)\Psi(u_0)+t_n \Psi(v)-\Psi(u_0) = t_n \left(\Psi(v)-\Psi(u_0) \right).
\end{align}
To simplify notations we put
\[
\theta_n := \theta(v_{t_n}), \quad w_n := (v_{t_n})_{\theta_n} = \theta_n v_{t_n} \left( \frac{\cdot}{\theta_n} \right) \in \cM_0.
\]
From $\| v_{t_n} - u_0 \|_{X_0} = O(t_n)$ and \eqref{2} 
with $\Theta \in \cC^1((0,\infty) \times X_0 \setminus \{0\} )$ and $\Theta(\Psi(u),u)=\theta(u)$ for $u \in D(\Psi)\setminus \{0\}$, we get 
\begin{equation}\label{eq-diff-th_n-1}
	|\theta_n-1| = |\Theta(\Psi(v_{t_n}), v_{t_n}) - \Theta(\Psi(u_0),u_0)| = O(t_n).
\end{equation}
Using the fact that $\Phi \in \cC^1(X;\R)$ and \eqref{1}, we infer that
\begin{equation}\label{eq-diff-IvnIu0}
	\begin{aligned}
		I(v_{t_n}) - I(u_0) &= \Psi(v_{t_n}) - \Psi(u_0) - (\Phi(v_{t_n})-\Phi(u_0)) \\
		&= \Psi(v_{t_n}) - \Psi(u_0) - \Phi'(u_0)(v_{t_n}-u_0) + o(t_n) \\
		&\leq t_n \left\{\Psi(v)-\Psi(u_0) -\Phi'(u_0)(v-u_0)\right\} + o(t_n).
	\end{aligned}
\end{equation} 
By \eqref{eq-2ndorder} and \eqref{eq-diff-th_n-1} with $v_{t_n} \in \overline{B_{X_0} (u_0, r_0)} \cap D(\Psi) $, 
\begin{align*}
I(w_n)-I(v_{t_n}) \leq \frac{L_1}{2} |\theta_n-1|^2 = O(t_n^2)
\end{align*}
and recalling that $u_0$ minimizes $I$ on $\cM_0$, we see from \eqref{eq-diff-IvnIu0} that 
\[
0 \leq I(w_n) - I(u_0) \leq I(v_{t_n}) - I(u_0) + O(t_n^2) \leq t_n \left\{\Psi(v)-\Psi(u_0) -\Phi'(u_0)(v-u_0)\right\} + o(t_n).
\]
Dividing the above inequality by $t_n$ and sending $t_n \to 0$ imply \eqref{def-cri} under $\Psi(u_0) \leq \Psi(v_{t_n})$ for each $n$.

If there exists a subsequence $(n_k)_{k \in \N}$ such that $\Psi(u_0) \leq \Psi(v_{t_{n_k}})$ for all $k$, 
then the previous argument is still valid. 
Therefore, we may assume $\Psi(u_0) > \Psi(v_{t_n})$ for all $n$. We will use the same notation as in the previous case. If
\[
\limsup_{n\to\infty} \frac{|\theta_n-1|}{t_n} < \infty,
\]
then \eqref{eq-diff-th_n-1} holds in this case and the previous argument can be exploited to obtain 
\[
\Psi(v) - \Psi(u_0) - \Phi'(u_0) (v-u) \geq 0.
\]
Consequently, we may assume that 
\begin{equation}\label{eq-theta_n-1-subl}
	\lim_{n\to\infty} \frac{|\theta_n-1|}{t_n} = \infty.
\end{equation}

In what follows, we shall show that \eqref{eq-theta_n-1-subl} does not occur, 
which implies that the desired inequality is satisfied by any $v \in D(\Psi)$ and this completes the proof. 
For this purpose, we next show $\theta_n \in (0,1)$ for all sufficiently large $n$. Indeed, suppose that there exists $(\theta_{n_k})$ such that 
$n_k \to \infty$ and $\theta_{n_k} \geq 1$. By $\| v_{t_{n_k}} - u_0 \|_{X_0} = c t_{n_k}$ and 
\[
0 = \Psi(v_{t_{n_k}}) - \int_{\R^N} H( \theta_{n_k} v_{t_{n_k}} ) \, dx, 
\]
it follows from the facts $u_0 \in \cM_0$ and $\Psi(u_0) > \Psi( v_{t_n} )$ in view of the assumption that for some $C_0>0$, which is independent of $k$, 
\[
\begin{aligned}
	\int_{\R^N} H(u_0) \, dx = \Psi(u_0) > \Psi(v_{t_{n_k}}) = \int_{\R^N} H(\theta_{n_k} v_{t_{n_k}}) \, dx 
	\geq \int_{\R^N} H(\theta_{n_k} u_0) \, dx - C_0 t_{n_k}. 
\end{aligned}
\]
Thus, we may find $\tau_k \in (0,1)$ such that 
\begin{equation}\label{eq:cont}
C_0 t_{n_k}  \geq  \int_{\R^N} H(\theta_{n_k} u_0) - H(u_0) \, dx 
= \int_{\R^N} H' \left( \left( 1 + \tau_k (\theta_{n_k} -1) \right) u_0 \right) u_0 \, dx \left( \theta_{n_k} - 1 \right).
\end{equation}
However, since $\theta_n = \theta(v_{t_n}) \to 1$ and \eqref{F3} lead to 
\[
\lim_{k \to \infty}\int_{\R^N} H' \left( \left( 1 + \tau_k (\theta_{n_k} -1) \right) u_0 \right) u_0 \, dx 
= \int_{\R^N} H'(u_0) u_0 \, dx > 0,
\]
\eqref{eq:cont} contradicts \eqref{eq-theta_n-1-subl}. Therefore, $\theta_n \in (0,1)$ holds for all sufficiently large $n$.

Since $\theta_n < 1$ and $I(u) = \frac{1}{N} \int_{\R^N} f(u)u \, dx$ for $u \in \cM_0$, \eqref{F5} and the fact $w_n \in \cM_0$ give 
\begin{align}
I(u_0) \leq I(w_n) &= \frac{\theta_n^N}{N} \int_{\R^N} f(\theta_n v_{t_n}) \theta_n v_{t_n} \, dx \nonumber \\
&\leq \frac{\theta_n^N}{N} \int_{\R^N} f( v_{t_n}) v_{t_n} \, dx \label{3} \\
&= \frac{1}{N} \int_{\R^N} f( v_{t_n}) v_{t_n} \, dx - (1-\theta_n^N) \frac{1}{N} \int_{\R^N} f( v_{t_n}) v_{t_n} \, dx. \nonumber
\end{align}
Observe that
\begin{equation*}
	\lim_{n\to\infty} \frac{1 - \theta_n^N }{t_n} \geq \lim_{n\to\infty} \frac{1-\theta_n}{t_n} = \infty
\end{equation*}
and that
\[
\int_{\R^N} f(v_{t_n})v_{t_n} \, dx \to \int_{\R^N} f(u_0)u_0 \, dx = N I(u_0) = Nc_0 > 0.
\]
Moreover by $\| v_{t_n} - u_0 \|_{X_0} = t_n \| v - u_0 \|_{X_0} = O(t_n)$ and 
the fact that $X_0 \ni u \mapsto \int_{\R^N} f(u)u \, dx \in \R$ is of class $\cC^1$, for some $C_1>0$, 
\[
\left| N I(u_0) - \int_{\R^N} f(v_{t_n}) v_{t_n} \, dx \right| = \left| \int_{\R^N} f(u_0)u_0 \, dx - \int_{\R^N} f(v_{t_n}) v_{t_n} \, dx \right| \leq C_1 t_n.
\]
However this contradicts \eqref{3}. Indeed, from \eqref{3} we obtain 
\[
I(u_0) - \frac{1}{N} \int_{\R^N} f( v_{t_n}) v_{t_n} \, dx \leq - (1-\theta_n^N) \frac{1}{N} \int_{\R^N} f( v_{t_n}) v_{t_n} \, dx
\]
or equivalently
\[
(1-\theta_n^N) \frac{1}{N} \int_{\R^N} f( v_{t_n}) v_{t_n} \, dx \leq \frac{1}{N} \int_{\R^N} f( v_{t_n}) v_{t_n} \, dx - I(u_0) \leq \frac{C_1}{N} t_n. 
\]
This yields the following contradiction: 
\[
0 < \int_{\R^N} f(u_0) u_0 \, dx = \lim_{n \to \infty} \int_{\R^N} f( v_{t_n} ) v_{t_n} \, dx \leq C_1 \limsup_{n\to\infty} \frac{t_n}{1-\theta_n^N} = 0.
\]
Thus \eqref{eq-theta_n-1-subl} does not occur and $u_0$ is a critical point of $I$. 
\end{proof}

\subsection{Existence of a minimizer and other properties of minimizers}

To prove \cref{th:1}, from \cref{l:RegCritP,Lem:c0-c0MP,lem:minimizers-are-critical}, it suffices to prove that 
there exists a minimizer, any minimizer has a constant sign and there exists a radial minimizer corresponding to $c_0$. 
We will show the existence of a minimizer by a direct minimization technique on $\cM_0$ and the profile decomposition.

\begin{proof}[Proof of \cref{th:1}]
Let $(u_n)_{n \in \N} \subset \cM_0$ be a minimizing sequence, namely $I(u_n) \to c_0$. Note that for $u \in \cM_0$ we have
\[
I(u) = \frac{1}{N} \int_{\R^N} f(u)u \, dx.
\]
Thus, $(\int_{\R^N} f(u_n) u_n \, dx  )_{n \in \N}$ is bounded. From \eqref{F6}, $( \int_{\R^N} F(u_n) \, dx )_{n \in \N}$ is bounded and 
by $I(u_n) \to c_0$, $(\Psi(u_n))_{n \in \N}$ is also bounded. 
Hence $(u_n)_{n \in \N}$ is bounded in $X_0$ via \eqref{exp-sq-1t^2}. From \cref{Prop:decomp} with $p_0=2^*$, 
there exist $K \in \{0,1,2,\dots\} \cup \{\infty\}$, $(y_n^j)_n$ and $\wt u_j$ such that 
\begin{equation}\label{eq-dec}
	\begin{aligned}
		&u_n(\cdot + y_n^j) \weakto \widetilde{u}_j, \quad u_n(x+y_n^j) \to \wt{u}_j (x) \quad \text{a.e. $\R^N$}, \\
		&\lim_{n\to\infty} \Psi(u_n) \geq \sum_{j=0}^K \Psi(\widetilde{u}_j) , \\
		&\lim_{n \to \infty} \int_{\R^N} F(u_n) \, dx = \sum_{j=0}^K \int_{\R^N} F(\widetilde{u}_j) \, dx, \\
		&\lim_{n \to \infty} \int_{\R^N} f(u_n)u_n \, dx = \sum_{j=0}^K \int_{\R^N} f(\widetilde{u}_j) \widetilde{u}_j \, dx.
	\end{aligned}
\end{equation}

If $K=0$ and $\wt u_0 = 0$, then 
\[
I(u_n) = \frac{1}{N} \int_{\R^N} f(u_n)u_n \, dx \to 0
\]
and $c_0 = 0$, which is a contradiction. Hence either $K \geq 1$ or $K=0$ and $\wt u_0 \not \equiv 0$.

Observe that $\cM_0 \subset D(\Psi) \setminus \{0\}$ due to $M_0(u) = \infty$ for $u \not \in D(\Psi)$, 
hence $u_n \in D(\Psi) \setminus \{0\}$. 
Since $D(\Psi)$ is convex and closed, it is also weakly closed thanks to Mazur's theorem, which gives $\tu_j \in D(\Psi)$.

To treat the cases $K \geq 1$ and $K = 0$ with $\wt u_0 \not \equiv 0$ simultaneously, set $J := \left\{ j \ : \ \wt u_j \not \equiv 0 \right\}$. 
Remark that $J \neq \emptyset$ by the above observation. 
For $j \in J$, let $\theta_j > 0$ be the unique number so that $(\tu_j)_{\theta_j} \in \cM_0$. 
Suppose that for all $j \in J$ we have $\theta_j > 1$. By \eqref{eq-diff-H} and \eqref{eq-dec}, 
\begin{equation}\label{Ineq:Psi}
	\begin{aligned}
		0 < \sum_{j \in J} \Psi(\tu_j) \leq \lim_{n\to\infty} \Psi(u_n) = \lim_{n\to\infty} \int_{\R^N} H(u_n) \, dx &= \sum_{j \in J} \int_{\R^N} H(\widetilde{u}_j) \, dx \\
		&< \sum_{j \in J} \int_{\R^N} H(\theta_j \widetilde{u}_j) \, dx = \sum_{j \in J} \Psi(\tu_j),
	\end{aligned}
\end{equation}
which is a contradiction. Hence there exists $j_0 \in J$ such that $\theta_{j_0} \leq 1$. Then, Fatou's lemma and \eqref{F5} yield
\begin{equation}\label{Ineq:energy}
	\begin{aligned}
	c_0 = \lim_{n\to\infty} I(u_n) &= \lim_{n\to\infty} \frac{1}{N} \int_{\R^N} f(u_n)u_n \, dx 
	= \lim_{n\to\infty} \frac{1}{N} \int_{\R^N} f(u_n(\cdot + y_n^{j_0} )) u_n(\cdot + y_n^{j_0} ) \, dx \\
	&\geq \frac{1}{N} \int_{\R^N} f(\tu_{j_0})\tu_{j_0} \, dx \geq \frac{1}{N} \theta_{j_0}^N \int_{\R^N} f(\theta_{j_0} \tu_{j_0}) \theta_{j_0} \tu_{j_0} \, dx 
	= I(m(\tu_{j_0})) \geq c_0.
	\end{aligned}
\end{equation}
Thus $\theta_{j_0} = 1$, $\tu_{j_0} \in \cM_0$, $\lim_{n \to \infty} \int_{\R^N} f(u_n) u_n \, dx = \int_{\R^N} f( \wt{u}_{j_0} ) \wt{u}_{j_0} \, dx$ 
and $I(\tu_{j_0}) = c_0$. Moreover, since $f(s)s > 0$ for $s \neq 0$ in view of \eqref{F3} and \eqref{F5}, 
by \eqref{eq-dec}, $\tu_j = 0$ for each $j \neq j_0$, $J=\{j_0\}$ and $\lim_{n \to \infty} \int_{\R^N} F(u_n) \, dx = \int_{\R^N} F( \wt{u}_{j_0} ) \, dx$. 
Furthermore, by $I(u_n) \to c_0 = I(\tu_{j_0})$, 
we have $\Psi(u_n) \to \Psi( \tu_{j_0} )$, and \cref{l:strconv} yields $\| u_n (\cdot + y_{n}^{j_0}) - \tu_{j_0} \|_{X_0} \to 0$. 
Thanks to \cref{lem:minimizers-are-critical} we know that $\tu_{j_0}$ is a critical point of $I$ 
and \cref{l:RegCritP} implies that $\tu_{j_0}$ is a classical solution of \eqref{eq:BI}.

We next prove that all minimizers have a constant sign. 
To this end, let $u \in \cM_0$ be any minimizer corresponding to $c_0$. 
We argue by contradiction and suppose $u_\pm \not \equiv 0$ where $u_+ := \max \{u,0\}$ and $u_- := \min \{u,0\}$. 
From $\| \nabla u \|_\infty < 1$ due to \cref{l:RegCritP,lem:minimizers-are-critical}, it follows that 
$u_+,u_- \in D(\Psi) \setminus \{0\}$. According to \cref{lem:projection} there exist $\theta_+$ and $\theta_-$ such that 
$(u_+)_{\theta_+}, (u_-)_{\theta_-} \in \cM_0$. By 
\[
\Psi(u_+) + \Psi(u_-) = \Psi(u) = \int_{\R^N} H(u) \, dx =  \int_{\R^N} H(u_+) \,dx + \int_{\R^N} H(u_-) \, dx,
\]
arguing as in \eqref{Ineq:Psi}, we get either $\theta_+\leq 1$ or $\theta_- \leq 1$. 
Let $\theta_+ \leq 1$. By noting $f(s) s \geq 0$ and replacing $u_n$ and $\wt{u}_{j_0}$ to $u$ and $u_+$ in \eqref{Ineq:energy}, 
we deduce that $\theta_+ = 1 $, $u_+ \in \cM_0$, $I(u_+) = c_0$ and $u_- \equiv 0$, which is a contradiction. 
Similarly, a contradiction occurs when $\theta_- \leq 1$ holds. 
Therefore, either $u_+ \equiv 0$ or $u_- \equiv 0$. Since $u$ is a classical solution of \eqref{eq:BI} with either $u \geq 0$ or $u \leq 0$, 
the strong maximum principle gives either $u>0$ or $u<0$ and thus $u$ has a constant sign.

Finally, we prove the existence of a radial minmizer corresponding to $c_0$. 
Let $u \in \cM_0$ be arbitrary minimizer of $c_0$. 
From the above, either $u>0$ or $u<0$ holds. 
Set $v:= u^*$ when $u>0$ where $u^*$ is the symmetric decreasing rearrangement of $u$, 
and $v:= - u^*$ when $u<0$. 
From \cite[Lemma 1]{Ta76} and \eqref{exp-sq-1t^2} (or \cite[Lemma 4.3]{BoDeCoDe12}) it follows that 
\[
\Psi(u) = \sum_{j \in \N} b_j \| \nabla u \|_{2j}^{2j} \geq \sum_{j \in \N} b_j \| \nabla u_0^* \|_{2j}^{2j} = \Psi(v).
\]
On the other hand, since $u$ has a constant sign and $f \in \cC^1$, we also have 
\[
\int_{\R^N} F(u) \, dx = \int_{\R^N} F(v) \, dx, \quad 
\int_{\R^N} f(u) u \, dx = \int_{\R^N} f(v) v \, dx,
\]
and hence 
\[
0 = M_0(u) = \Psi(u) - \int_{\R^N} H(u) \, dx \geq \Psi(v) - \int_{\R^N} H(v) \, dx = M_0(v). 
\]
From \cref{Lem-Theta} and \eqref{eq-diff-H}, there exists $\theta \in (0,1]$ such that $v_{\theta} \in \cM_0$ and 
\eqref{F5} yields 
\[
c_0 \leq I( v_{\theta} ) = \frac{\theta^N}{N} \int_{\R^N} f( \theta v ) \theta v \, dx
\leq \frac{1}{N} \int_{\R^N} f (v) v \, dx = \frac{1}{N} \int_{\R^N} f(u) u \, dx = I(u) = c_0.
\]
Hence $\theta = 1$ and $v \in \cM_0$ is also a minimizer and this completes the proof. 
\end{proof}


\subsection{Sobolev-type inequality}

\begin{proof}[Proof of \cref{th:2}]
Consider the case $f(u) = |u|^{p-2} u$ with $p > 2^*$. 
Fix any $u \in D(\Psi) \setminus \{0\}$. 
Then, from above considerations, it is clear that $I(m(u)) \geq c_0$. 
Recalling that $m(u) = u_{\theta(u)} \in \cM_0$, where $\theta = \theta(u) = \Theta(\Psi(u), u)$, we get
\begin{align}\label{ineq:1}
c_0 \leq I(u_\theta) =  \frac{\theta^{N}}{N} \int_{\R^N} f \left( \theta u \right) \theta u \, dx
= \frac{\theta^{N+p}}{N} \| u \|_p^p. 
\end{align}
Notice that the equality happens if and only if $m(u) = u_\theta$ is the minimizer on $\cM_0$. 
Since $F(u) = \frac{1}{p} |u|^p$, $H(u) = \left(\frac{1}{p} + \frac{1}{N} \right) |u|^p = \frac{N+p}{Np} |u|^p$ and $M_0(m(u)) = 0$, it follows that 
\[
\Psi(u) = \int_{\R^N} \frac{N+p}{Np} | \theta u|^p \, dx =\frac{N+p}{Np} \theta^p \int_{\R^N}  |  u|^p \, dx
\]
and that 
\[
\theta = \left( \frac{Np}{N+p} \Psi(u) \right)^{1/p} \frac{1}{\|u\|_p}.
\]
Thus, from \eqref{ineq:1} we obtain
\begin{align*}
c_0 \leq \frac{\theta^{N+p}}{N} \| u\|_p^p  = \frac{1}{N} \left( \frac{Np}{N+p} \Psi(u) \right)^{(N+p)/p} \frac{1}{\|u\|^{N}_p}
\end{align*}
and
\[
\left( \frac{N+p}{Np} \right)^{N+p} N^p c_0^p \| u \|_p^{Np} \leq \Psi(u)^{N+p}.
\]
Furthermore, the equality holds if and only if $m(u)=u_\theta$ is the minimizer of $\inf_{\cM_0} I$, 
and the proof is completed. 
\end{proof}

\section{Positive mass case}
\label{sec:positive}

In this section, the positive mass case is considered. 
Hereafter, $N \geq 1$ and \eqref{G1}--\eqref{G3} are assumed. 
For arguments which are similar to the zero mass case, 
we outline points where we need modifications from the zero mass case.

\subsection{The projection into the Poho\v{z}aev set}

We will show that there exists a continuous projection from $D(\Psi)\setminus \{0\}$ onto $\cM_1$. 
To this end, we first prove 
\begin{Lem}\label{Lem-Theta-pos}
	For every $\alpha > 0$ and $u \in X_1 \setminus \{0\}$ there is a unique $\Theta = \Theta(\alpha, u) > 0$ such that
	\[
	\alpha = \int_{\R^N} - \left( \frac12 + \frac{1}{N} \right) \Theta^2 u^2 + G(\Theta u) + \frac{1}{N} g(\Theta u)\Theta u \, dx.
	\]
	Furthermore, the map $(0,\infty) \times X_1 \setminus \{0\} \ni (\alpha , u) \mapsto \Theta(\alpha,u) \in (0,\infty)$ is of class $\cC^1$. 
\end{Lem}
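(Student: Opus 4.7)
The plan is to mirror the proof of \cref{Lem-Theta} from the zero mass case, adapted to the positive-mass setting. We set
\[
J(\alpha, u, \theta) := \alpha - \int_{\R^N} H(\theta u)\,dx, \qquad H(s) := -\left(\tfrac{1}{2}+\tfrac{1}{N}\right)s^2 + G(s) + \tfrac{1}{N}\, g(s)s,
\]
so that the identity in the statement becomes $J(\alpha,u,\Theta) = 0$. From $X_1 \subset L^\infty(\R^N)$ (part of \eqref{eq-X-emb}) together with \eqref{G1}, $J$ is of class $\cC^1$ on $(0,\infty) \times (X_1 \setminus \{0\}) \times (0,\infty)$, and existence and uniqueness of $\Theta$ reduce to the monotonicity and limiting behaviour of $\theta \mapsto \phi(\theta) := \int_{\R^N} H(\theta u)\,dx$ on $(0,\infty)$.

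The main issue is uniqueness, since without an analogue of \eqref{F3} one cannot argue that $\phi$ is monotone on all of $(0,\infty)$. The substitute is to show that $s \mapsto H(s)/s^2$ is strictly monotone in $|s|$. A direct computation yields
\[
sH'(s) - 2H(s) = \tfrac{N-1}{N}\, g(s)s + \tfrac{1}{N}\, g'(s) s^2 - 2 G(s).
\]
Condition \eqref{G2}, written in differential form, asserts that $(g(s)/|s|)'$ has the same sign as $s$, which in particular gives $g'(s) s^2 > g(s) s$ for all $s \neq 0$. In parallel, the auxiliary function $h(s) := g(s)s - 2G(s)$ satisfies $h(0)=0$ and $h'(s) = g'(s)s - g(s)$, which by the same input has the same sign as $s$; hence $h(s) > 0$ for every $s \neq 0$. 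Substituting these two facts into the display above gives $sH'(s) - 2H(s) > g(s)s - 2G(s) > 0$ for all $s \neq 0$, and since $(H(s)/s^2)' = (sH'(s) - 2H(s))/s^3$, the map $s \mapsto H(s)/s^2$ is strictly increasing on $(0,\infty)$ and strictly decreasing on $(-\infty,0)$.

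Set $Q(\theta) := \phi(\theta)/\theta^2$ for $\theta > 0$. At each $x$ with $u(x) \neq 0$, writing $H(\theta u(x))/\theta^2 = u(x)^2 \cdot H(\theta u(x))/(\theta u(x))^2$ and using the previous paragraph shows $\theta \mapsto H(\theta u(x))/\theta^2$ is strictly increasing on $(0,\infty)$. Integrating over $\R^N$ with the domination $|H(\theta u)/\theta^2| \leq C(\|u\|_\infty)\, u^2$ for $\theta \in (0,1]$ (valid by \eqref{G1} since $G(s), g(s)s = O(s^2)$ near $0$), $Q$ is continuous and strictly increasing on $(0,\infty)$, with
\[
\lim_{\theta \to 0^+} Q(\theta) = -\left(\tfrac{1}{2}+\tfrac{1}{N}\right)\|u\|_2^2 < 0, \qquad \lim_{\theta \to \infty} Q(\theta) = +\infty,
\]
the latter by Fatou once \eqref{G3} is used to guarantee $G(s)/s^2, g(s)/s \to \infty$ as $|s|\to\infty$. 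Writing $\phi(\theta) = \theta^2 Q(\theta)$, there is a unique $\theta^* > 0$ with $Q(\theta^*) = 0$; consequently $\phi \leq 0$ on $(0,\theta^*]$ while $\phi$ is strictly increasing from $0$ to $+\infty$ on $(\theta^*,\infty)$, so for every $\alpha > 0$ the equation $J(\alpha,u,\Theta)=0$ admits a unique solution $\Theta = \Theta(\alpha,u) \in (\theta^*,\infty)$.

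Finally, $\cC^1$-regularity of $\Theta$ follows from the implicit function theorem applied to $J$: at $(\alpha,u,\Theta)$ we have $\partial_\theta J(\alpha,u,\Theta) = -\phi'(\Theta) = -\bigl[2\Theta\, Q(\Theta) + \Theta^2 Q'(\Theta)\bigr] < 0$, since $Q(\Theta) > 0$ (because $\Theta > \theta^*$) and $Q'(\Theta) \geq 0$ (from the pointwise strict monotonicity of the integrand defining $Q$). I expect the main technical point to be the monotonicity computation in the second paragraph: everything else is a direct adaptation of \cref{Lem-Theta}, but producing the positive quantity $sH'(s) - 2H(s) > 0$ requires combining the two consequences of \eqref{G2} in the right order.
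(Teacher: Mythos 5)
Your argument is, in substance, the paper's own: the paper works with $\cH(u,\theta):=\int_{\R^N}H(\theta u)\,dx$ and shows in \eqref{eq-diff-cH} that $\theta\,\partial_\theta\cH-2\cH=\int_{\R^N}\bigl(sH'(s)-2H(s)\bigr)\big|_{s=\theta u}\,dx\geq 0$, deducing $\partial_\theta\cH>0$ wherever $\cH\geq 0$, a unique zero $\Theta_1$, and strict increase of $\cH$ on $(\Theta_1,\infty)$; your scaled function $Q(\theta)=\cH(u,\theta)/\theta^2$ is just a repackaging of the same pointwise inequality, and your limits as $\theta\to 0^+$, $\theta\to\infty$ and the implicit function theorem ending coincide with the paper's. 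One justification in your second paragraph is stated incorrectly, although the inequality you actually use is right: strict monotonicity of $s\mapsto g(s)/|s|$ gives a derivative that is $\geq 0$ for \emph{all} $s$ (not ``the same sign as $s$''), and only the nonstrict pointwise bound follows; reading it correctly, for $s<0$ one gets $g'(s)s\leq g(s)$ and, after multiplying by $s<0$, again $g'(s)s^2\geq g(s)s$ --- whereas your literal sign statement would produce the reversed inequality on $(-\infty,0)$. Likewise your derivative-sign argument for $h(s)=g(s)s-2G(s)$ only yields $h\geq 0$; the strict bound $h(s)>0$ for $s\neq 0$ should be taken from the integral comparison \eqref{eq-2Gg}, as the paper does. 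With these repairs the chain $sH'(s)-2H(s)\geq g(s)s-2G(s)>0$ for $s\neq 0$ holds and the rest of your proof goes through unchanged; note also that uniqueness of $\Theta$ needs only that $Q$ be nondecreasing, since two solutions $\theta_1<\theta_2$ of $\cH(u,\cdot)=\alpha>0$ would force $Q(\theta_1)=\alpha/\theta_1^2>\alpha/\theta_2^2=Q(\theta_2)$.
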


\begin{proof}
Set 
\[
\begin{aligned}
	\cH(u,\theta) &:= \int_{\R^N} - \left( \frac12 + \frac{1}{N} \right) \theta^2 u^2 + G(\theta u) + \frac{1}{N} g(\theta u)\theta u \, d x, 
	\\
	J(\alpha,u,\theta) &:= \alpha - \cH(u,\theta) \in \cC^1 \left( (0,\infty) \times X_1 \setminus\{0\} \times (0,\infty) ; \R \right).
\end{aligned}
\]
From $X_1 \subset L^\infty(\R^N)$, \eqref{G1} and \eqref{G3}, it follows that for each fixed $u \in X_1 \setminus \{0\}$, 
\begin{equation}\label{eq-lim-theta}
\lim_{\theta \to 0} \left\| \frac{G(\theta u)}{\theta^2 u^2} + \frac{1}{N} \frac{g(\theta u)}{\theta u} \right\|_{\infty} = 0, \quad 
\lim_{\theta \to \infty} \cH(u,\theta) = \infty. 
\end{equation}
Furthermore, by \eqref{G2}, for each $s \neq 0$, 
\begin{equation}\label{eq-2Gg}
G(s) = \int_0^s g(\tau) \, d \tau = \int_0^s \frac{g(\tau)}{\tau} \cdot \tau \, d \tau < \frac{g(s)}{s} \int_0^s \tau \, d \tau = \frac{1}{2}g(s) s. 
\end{equation}
For any $u \in X_1 \setminus \{0\}$ and  $\theta \in (0,\infty)$ with $\cH(u,\theta) \geq 0$, notice that \eqref{eq-2Gg} and \eqref{G2} yield 
\begin{equation}\label{eq-diff-cH}
	\begin{aligned}
		\frac{\partial}{\partial\theta} \cH(u,\theta) 
		&= \int_{\R^N} - 2 \left( \frac12 + \frac{1}{N} \right) \theta u^2 + g(\theta u)u + \frac{1}{N} \left( g'(\theta u)\theta u^2 + g(\theta u)  u \right) \, dx \\
		&= \theta^{-1} \left( 2\cH(u,\theta) +  \int_{\R^N} -2 G(\theta u) + \left(1 - \frac{1}{N}\right) g(\theta u) \theta u + \frac{1}{N} g'(\theta u) \theta^2 u^2 \, dx \right) \\
		&> \frac{\theta^{-1}}{N} \int_{\R^N} g'(\theta u) \theta^2 u^2 - g(\theta u) \theta u \, dx \geq 0. 
	\end{aligned}
	\end{equation}
From \eqref{eq-lim-theta} and this fact, there exists a unique $\Theta_1 = \Theta_1 (u) > 0$ such that 
$\cH(u,\Theta_1) = 0$ and $(\Theta_1,\infty) \ni \theta \mapsto \cH(u,\theta)$ is strictly increasing. 
Therefore, for any given $\alpha > 0$ and $u \in X_1 \setminus \{0\}$,  we may find a unique $\Theta=\Theta(\alpha,u) \in (\Theta_1,\infty)$ such that 
$J(\alpha, u, \Theta) = \alpha - \cH(u,\Theta) = 0$. 
Moreover, by \eqref{eq-diff-cH}, we have 
\[
\frac{\partial J}{\partial \theta} (\alpha, u , \Theta) < 0.
\]
Hence, the regularity of $\Theta$ follows from the implicit function theorem. 
\end{proof}

As in the zero mass case, consider $u_\theta := \theta u(\cdot/\theta)$. Observe that
\begin{align*}
\frac{d}{d\theta} I(u_\theta) = N \theta^{N-1} \left( \Psi(u) - \int_{\R^N} - \left( \frac12 + \frac{1}{N} \right) \theta^2 u^2 + G(\theta u) + \frac{1}{N} g(\theta u)\theta u \, dx \right).
\end{align*}
By putting $\theta(u) := \Theta (\Psi(u), u)$, the following hold from \cref{Lem-conti-Psi,Lem-Theta-pos}: 

\begin{Lem}
For every $u \in D(\Psi) \setminus \{0\}$ there is a unique $\theta = \theta(u) > 0$ such that $u_\theta \in \cM_1$. Moreover the map
\[
m : D(\Psi) \setminus \{0\} \rightarrow \cM_1, \quad m(u) := u_{\theta(u)}
\]
is continuous where the topology on $D(\Psi) \setminus \{0\}$ is induced from $X_1$.
\end{Lem}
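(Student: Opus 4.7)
The plan is to reduce the statement directly to \cref{Lem-Theta-pos} via a scaling computation, and then deduce continuity from the regularity of $\Theta$ together with \cref{Lem-conti-Psi}. First, I would carry out the change of variables $x = \theta y$ to show that
\[
M_1(u_\theta) = \theta^N \left( \Psi(u) - \int_{\R^N} - \left( \frac{1}{2} + \frac{1}{N}\right) \theta^2 u^2 + G(\theta u) + \frac{1}{N} g(\theta u) \theta u \, dy \right),
\]
which uses only that $|\nabla u_\theta|(x) = |\nabla u|(x/\theta)$, so that $\Psi(u_\theta) = \theta^N \Psi(u)$ and analogous identities for $\Phi$ and $\Phi'(u_\theta)(u_\theta)$. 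The content of this identity is already encoded in the formula for $d I(u_\theta)/d\theta$ displayed right before the lemma.

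Next, I would note that for any $u \in D(\Psi) \setminus \{0\}$ one has $\Psi(u) > 0$: if $\Psi(u) = 0$ then $\nabla u = 0$ almost everywhere, and since $u \in X_1 \subset H^1(\R^N)$ this forces $u \equiv 0$, a contradiction. Hence $\alpha := \Psi(u) > 0$, and \cref{Lem-Theta-pos} produces a unique $\theta(u) := \Theta(\Psi(u),u) > 0$ such that $u_{\theta(u)} \in \cM_1$. This establishes both the existence and uniqueness parts of the statement.

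For continuity, I would write $\theta(u) = \Theta(\Psi(u),u)$ as a composition: the restriction of $\Psi$ to $D(\Psi)$ is continuous by \cref{Lem-conti-Psi}, and $\Theta$ is of class $\cC^1$ on $(0,\infty) \times (X_1 \setminus \{0\})$ by \cref{Lem-Theta-pos}, so $\theta(\cdot) : D(\Psi) \setminus \{0\} \to (0,\infty)$ is continuous. To pass from the continuity of $\theta(\cdot)$ to that of $m$, I would split
\[
u_{n,\theta(u_n)} - u_{\theta(u)} = T_{\theta(u_n)}(u_n - u) + \bigl( T_{\theta(u_n)} u - T_{\theta(u)} u \bigr), \qquad T_\theta w := \theta w(\cdot/\theta),
\]
and use the scaling identities $\|T_\theta w\|_2^2 = \theta^{N+2}\|w\|_2^2$ and $\|\nabla T_\theta w\|_q^q = \theta^N \|\nabla w\|_q^q$ for $q \in \{2,2j_0\}$ to conclude that both terms vanish in $X_1$ when $u_n \to u$ in $X_1$ and $\theta(u_n) \to \theta(u) > 0$.

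The main obstacle I anticipate is a purely bookkeeping one, namely verifying carefully that the reduction in the first step places us exactly in the setting of \cref{Lem-Theta-pos} with $\alpha = \Psi(u)$; once that identification is made, existence and uniqueness are immediate, and the continuity is a standard composition-plus-scaling argument. No deeper analytical difficulty arises, in contrast to the zero-mass analogue, because \cref{Lem-Theta-pos} has already absorbed the monotonicity argument using \eqref{G2}.
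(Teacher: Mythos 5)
Your proposal is correct and follows essentially the same route as the paper: the paper likewise writes $M_1(u_\theta)=\theta^N\bigl(\Psi(u)-\cH(u,\theta)\bigr)$, sets $\theta(u):=\Theta(\Psi(u),u)$ via \cref{Lem-Theta-pos}, and gets continuity of $m$ from \cref{Lem-conti-Psi} together with the $\cC^1$ regularity of $\Theta$. Your additional remarks (that $\Psi(u)>0$ for $u\in D(\Psi)\setminus\{0\}$, and the explicit splitting showing continuity of $(u,\theta)\mapsto u_\theta$ in $X_1$) only spell out details the paper leaves implicit.
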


\subsection{Minimizers on the Poho\v{z}aev set are critical points}

For $u \in \cM_1$, from $M_1(u) = 0$ it follows that 
\begin{equation}\label{eq:I-on-M1}
	I(u) = I(u) - \frac{N}{N+2} M_1(u) 
	= \frac{2}{N+2} 
	\left\{ \Psi(u) + \int_{\R^N} \frac{1}{2} g(u) u - G(u) \, dx \right\}.
\end{equation}
From \eqref{eq-2Gg}, the integrand of the right-hand side is nonnegative. Moreover, for each $s \in \R$ and $\theta > 0$, 
\eqref{G2} implies 
\[
\theta \frac{d}{d\theta} \left( \frac{1}{2} g(\theta u) \theta u - G(\theta u) \right) 
= \frac{1}{2} g'(\theta u) (\theta u)^2 - \frac{1}{2} g(\theta u) \theta u \geq 0.
\]
In particular, if $u \in D(\Psi) \setminus \{0\}$ and $\theta > 0$, then 
\begin{equation}\label{eq-mono-int}
	\frac{d}{d\theta} \int_{\R^N} \frac{1}{2} g(\theta u) \theta u - G(\theta u) \, dx \geq 0. 
\end{equation}

To prove that minimizers on $\cM_1$ are critical points of $I$, we need the following: 

\begin{Lem}\label{lem:nehari-type-inequality}
Let $f(s) := - s + g(s)$ and $u \in X_1 \setminus \{0\}$ satisfy $\int_{\R^N} f(u)u \, dx > 0$. Then for each $t >1$, 
\[
\int_{\R^N} f(tu)tu \, dx > \int_{\R^N} f(u)u \, dx.
\]
\end{Lem}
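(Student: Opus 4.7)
The plan is to derive a pointwise inequality from (G2) and then integrate, combining with the nonlinear structure $f(s)=-s+g(s)$ and the hypothesis $\int f(u)u\,dx>0$.

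First I would establish the pointwise inequality: for each $t>1$ and $s\neq 0$,
\[
g(ts)\cdot ts > t^{2}\, g(s)\, s.
\]
To see this, I would consider the two cases $s>0$ and $s<0$ separately. For $s>0$ we have $ts>s>0$, so (G2) gives $g(ts)/(ts) = g(ts)/|ts| > g(s)/|s| = g(s)/s$. For $s<0$ we have $ts<s<0$, so (G2) gives $g(ts)/|ts| < g(s)/|s|$; since $|ts|=-ts$ and $|s|=-s$, this again rearranges to $g(ts)/(ts) > g(s)/s$. In either case, multiplying the inequality $g(ts)/(ts) > g(s)/s$ by $(ts)^{2} = t^{2}s^{2}>0$ yields the claim.

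Next I would integrate this pointwise inequality against $u\in X_1\setminus\{0\}$. Since the inequality $g(tu(x))\cdot tu(x) > t^{2}\, g(u(x))\,u(x)$ holds at every $x$ with $u(x)\neq 0$, and both sides vanish where $u(x)=0$, integrating gives
\[
\int_{\R^{N}} g(tu)\, tu \, dx > t^{2}\int_{\R^{N}} g(u)\, u \, dx,
\]
with strict inequality because $u\not\equiv 0$. Writing $f(s)s=-s^{2}+g(s)s$ and using $t^{2}>1$ together with the assumption $\int_{\R^N} f(u)u\,dx>0$, I then conclude
\[
\int_{\R^{N}} f(tu)\, tu \, dx = -t^{2}\!\int_{\R^{N}} u^{2}\,dx + \int_{\R^{N}} g(tu)\,tu\,dx
> t^{2}\!\int_{\R^{N}} f(u)u\,dx
> \int_{\R^{N}} f(u)u\,dx.
\]

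The only delicate point is the careful handling of the sign of $s$ when applying (G2): the monotonicity hypothesis involves $g(s)/|s|$ rather than $g(s)/s$, so the direction of the inequality before multiplication must be tracked through a case split. Once that is done, the remainder is a one-line consequence of $t^{2}>1$ and the positivity hypothesis $\int f(u)u>0$, so I do not expect any further obstacle.
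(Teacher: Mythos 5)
Your proof is correct, and it takes a genuinely different route from the paper. The paper's proof differentiates $t \mapsto \int_{\R^N} f(tu)tu \, dx$, computes
\[
\frac{d}{dt}\int_{\R^N} f(tu)tu\,dx = t\left\{ 2\beta + \int_{\R^N} u^2\left(-2\frac{g(u)}{u} + g'(tu) + \frac{g(tu)}{tu}\right)dx \right\},
\]
and bounds this below by $2\beta t>0$ using \eqref{G2} twice: once through the monotonicity of $s\mapsto g(s)/|s|$ and once through its differentiated consequence $g'(s)s^2 - g(s)s \geq 0$. You instead avoid differentiation altogether: from \eqref{G2} you derive the pointwise scaling inequality $g(ts)\,ts > t^2 g(s)s$ for $s\neq 0$, $t>1$ (your case split on the sign of $s$ is handled correctly), integrate it, and conclude via $t^2>1$ and $\beta>0$ that $\int f(tu)tu\,dx > t^2\beta > \beta$. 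Your argument is more elementary in that it never uses $g'$ and needs no justification of differentiation under the integral sign (the paper's computation relies on $u\in L^\infty\cap L^2$ and $g\in\cC^1$ for that step), while the paper's monotonicity-of-the-derivative formulation is the one reused verbatim inside \cref{lem:positive-mass-minimizers-crit}. Two minor points you should make explicit but which are not gaps: integrability of $g(u)u$ and $g(tu)tu$ (immediate from $u\in L^2\cap L^\infty$, $g\in\cC^1$, $g(0)=g'(0)=0$, so $|g(s)s|\lesssim s^2$ on bounded sets), and the fact that $\{u\neq 0\}$ has positive measure for the strictness of the integrated inequality — though in fact the non-strict inequality already suffices, since $t^2\beta>\beta$ does the final strict step. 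Note also that both arguments yield the same slightly stronger quantitative conclusion $\int f(tu)tu\,dx \geq t^2\int f(u)u\,dx$.
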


\begin{proof}
Denote $\beta := \int_{\R^N} f(u)u \, dx > 0$. For $t > 1$ we observe that
\begin{align*}
\frac{d}{dt} \int_{\R^N} f(tu)tu \, dx &= \int_{\R^N} -2tu^2 + g'(tu)tu^2 + g(tu)u \, dx \\
&= t \left( 2\beta - 2 \int_{\R^N} g(u)u \, dx + \int_{\R^N} g'(tu)u^2 + \frac{g(tu)}{tu} u^2 \, dx \right) \\
&= t \left\{ 2\beta + \int_{\R^N} u^2 \left( - 2 \frac{g(u)}{u} + g'(tu) + \frac{g(tu)}{tu} \right) \, dx \right\}. 
\end{align*}
Here \eqref{G2} and $g'(s) s^2 - g(s) s \geq 0$ imply 
\[
\frac{d}{dt} \int_{\R^N} f(tu)tu \, dx \geq t \left\{ 2\beta +  2 \int_{\R^N} u^2 \left( \frac{g(tu)}{tu} - \frac{g(u)}{u^2} \right)  dx \right\} \geq 2 \beta t > 0
\]
and the proof is completed.
\end{proof}

We next define the mountain pass $c_{1,MP}$ by 
\[
c_{1,MP} := \inf_{\gamma \in \Gamma} \max_{t \in [0,1]} I(\gamma(t)), \quad 
\Gamma := \left\{ \gamma \in \cC( [0,1] ; X_1 ) \ : \ \gamma([0,1]) \subset D(\Psi), \ \gamma(0) = 0, \ I(\gamma(1)) < 0 \right\}. 
\]

\begin{Lem}\label{Lem:c1-c1MP}
	For each $u \in \cM_1$ there exists a path $\gamma_{u} \in \cC([0,1] ; D (\Psi) )$ such that 
	$\max_{0 \leq t \leq 1} I(\gamma_{u}(t)) = I(u)$. 
	Moreover, $c_1 = c_{1,MP} > 0$ holds. 
\end{Lem}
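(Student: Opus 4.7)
The plan is to mirror the argument in \cref{Lem:c0-c0MP}, but with modifications suited to the positive-mass structure. Given $u \in \cM_1$, I would use the scaling $u_\theta := \theta u(\cdot/\theta)$. Since $\|\nabla u_\theta\|_\infty = \|\nabla u\|_\infty \leq 1$, we have $u_\theta \in D(\Psi)$ for every $\theta > 0$; moreover $\|u_\theta\|_{X_1} \to 0$ as $\theta \to 0^+$, while $I(u_\theta) \to -\infty$ as $\theta \to \infty$, the latter following from \eqref{G3} once $G(\theta u)/\theta^2$ is compared with the negative quadratic part of $\Phi$. By \cref{Lem-Theta-pos}, for $u \in \cM_1$ the unique critical point of $\theta \mapsto I(u_\theta)$ is $\theta(u) = 1$, and the explicit sign analysis of $\Psi(u) - \cH(u,\theta)$ carried out there shows $I(u_\theta)$ is strictly increasing on $(0,1)$ and strictly decreasing on $(1,\infty)$. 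Choosing $T > 1$ large enough that $I(u_T) < 0$ and setting $\gamma_u(t) := u_{tT}$ for $t \in (0,1]$ with $\gamma_u(0) := 0$ then yields $\gamma_u \in \Gamma$ with $\max_{t \in [0,1]} I(\gamma_u(t)) = I(u)$. Taking the infimum over $u \in \cM_1$ gives $c_{1,MP} \leq c_1$.

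To obtain $c_{1,MP} > 0$, I would argue exactly as in the zero-mass case: the expansion \eqref{exp-sq-1t^2} gives $\Psi(u) \geq b_1 \|\nabla u\|_2^2 + b_{j_0}\|\nabla u\|_{2j_0}^{2j_0}$, while \eqref{G1} together with $X_1 \hookrightarrow L^\infty$ yields $|G(u)| \leq \varepsilon u^2$ on $\{\|u\|_{X_1} \leq \rho\}$ for small $\rho$. This produces a small $\rho > 0$ with $\inf_{\|u\|_{X_1} = \rho} I(u) > 0$, and any $\gamma \in \Gamma$ must cross $\{\|u\|_{X_1} = \rho\}$ since it starts at $0$ and terminates in $\{I < 0\}$.

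The delicate step is the reverse inequality $c_{1,MP} \geq c_1$, which I would obtain by showing that every $\gamma \in \Gamma$ meets $\cM_1$. Set $t_0 := \max\{t \in [0,1] \ : \ \gamma(t) = 0\} < 1$; for $t$ slightly above $t_0$ the smallness of $\gamma(t)$ in $X_1$ combined with $G(s) = o(s^2)$ and $g(s)s = o(s^2)$ near $0$ forces the positive quadratic term $(\tfrac{1}{2} + \tfrac{1}{N})\|\gamma(t)\|_2^2$ arising in $M_1$ to dominate, hence $M_1(\gamma(t)) > 0$. At the endpoint, the key novelty—replacing the direct use of \eqref{F5} available in the zero-mass setting—is to invoke \eqref{eq-2Gg}, which follows from \eqref{G2} and gives $\int_{\R^N} G(u)\,dx < \tfrac{1}{2}\int_{\R^N} g(u) u \, dx$ for every $u \not\equiv 0$. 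Since $I(\gamma(1)) < 0$ forces $\int_{\R^N} G(\gamma(1))\,dx > \Psi(\gamma(1)) + \tfrac{1}{2}\|\gamma(1)\|_2^2$, one deduces $\Phi'(\gamma(1))(\gamma(1)) = -\|\gamma(1)\|_2^2 + \int_{\R^N} g(\gamma(1))\gamma(1)\,dx > 0$, and consequently $M_1(\gamma(1)) = I(\gamma(1)) - \tfrac{1}{N}\Phi'(\gamma(1))(\gamma(1)) < I(\gamma(1)) < 0$. Continuity of $t \mapsto M_1(\gamma(t))$—which uses \cref{Lem-conti-Psi} on $\gamma([0,1]) \subset D(\Psi)$ together with $\Phi \in \cC^1(X_1;\R)$—and the intermediate value theorem then produce $t^* \in (t_0,1)$ with $\gamma(t^*) \in \cM_1$, so $\max_{[0,1]} I\circ \gamma \geq c_1$.

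The main obstacle is precisely this endpoint sign of $M_1$: without \eqref{F5} one cannot assert $\Phi'(u)u \geq 0$ for free, and the strict inequality in \eqref{eq-2Gg}, where the strictly increasing hypothesis \eqref{G2} does essential work, must be combined with the strict negativity of $I(\gamma(1))$ to recover positivity of $\Phi'(\gamma(1))(\gamma(1))$.
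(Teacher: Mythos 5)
Your proposal is correct and follows essentially the same route as the paper: the scaled path $t\mapsto u_{tT}$ together with \cref{Lem-Theta-pos} gives $c_{1,MP}\le c_1$, the smallness estimates from \eqref{G1} give $c_{1,MP}>0$ and $M_1>0$ near the origin, and the endpoint sign $M_1(\gamma(1))<0$ via \eqref{eq-2Gg} (your inequality $\Phi'(u)u\ge -\|u\|_2^2+2\int_{\R^N}G(u)\,dx>2\Psi(u)\ge 0$ is literally the paper's computation $\Phi'(u)u\ge 2\Phi(u)=2(\Psi(u)-I(u))>0$) plus the intermediate value theorem yields $\gamma([0,1])\cap\cM_1\neq\emptyset$, hence $c_1\le c_{1,MP}$.
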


\begin{proof}
Let $u \in \cM_1$ and consider $\gamma_u( t ) := (u)_{T t} = (Tt) u(\cdot/(Tt))$. 
It is immediate to see $\gamma \in \cC( [0,1] ; D(\Psi)  )$. Recalling the notation $\cH$ in the proof of \cref{Lem-Theta-pos}, we see that 
\[
\frac{d}{dt} I(\gamma_u(t)) = NT (Tt)^{N-1} \left( \Psi(u) - \cH(u,tT) \right). 
\]
According to \cref{Lem-Theta-pos} and $u \in \cM_1$, for sufficiently large $T\gg1$, 
\[
\max_{ 0 \leq t \leq 1 } I(\gamma_u(t)) = I(u), \quad I(\gamma_u(1)) < 0. 
\]
Hence, the first assertion holds and this yields $c_{1,MP} \leq c_1$.

Next, we prove the inequality $c_{1,MP} > 0$. 
If $|s| \ll 1$, then \eqref{G1} yields 
\[
F(s) = - \frac{s^2}{2} + G(s) \leq - \frac{s^2}{2} + \frac{s^2}{4} = - \frac{1}{4} s^2.
\]
Thus, when $\| u \|_{X_1} \ll 1$, by $X_1 \subset L^\infty(\R^N)$, we have 
\[
\Phi(u)\leq - \frac{1}{4} \| u \|_2^2, \quad 
I(u) \geq b_1 \| \nabla u \|_2^2 + b_{2j_0} \| \nabla u \|_{2j_0}^{2j_0} + \frac{1}{4} \| u \|_2^2. 
\]
Hence, $c_{1,MP} > 0$ holds.

Finally, the inequality $c_1 \leq c_{1,MP}$ will follow from 
$\gamma([0,1]) \cap \cM_1 \neq \emptyset$ for any $\gamma \in \Gamma$. 
This assertion can be proved by the following argument. 
A similar argument to the above yields $M_1(u) > 0$ for each $0< \| u \|_{X_1} \ll 1$. On the other hand, by  
\[
M_1(\gamma(1)) = I(\gamma(1)) - \frac{1}{N} \Phi'(\gamma(1)) \gamma(1)
\]
and if $u \in D(\Psi) \setminus \{0\}$ satisfies $I(u) < 0$, then \eqref{eq-2Gg} gives 
\[
\Phi'(u) u = \int_{\R^n} f(u) u \, dx = \int_{\R^N } -u^2 + g(u)u \, dx \geq 2 \int_{\R^N} - \frac{u^2}{2} + G(u) \, dx = 2\left( - I(u) + \Psi(u) \right) > 0.
\]
Hence, $M_1(\gamma(1)) < 0$ and there exists $\tau \in (0,1)$ such that $M_1(\gamma(\tau_1)) = 0$. 
This completes the proof. 
\end{proof}

\begin{Lem}\label{lem:positive-mass-minimizers-crit}
Let $u_0 \in \cM_1$ satisfy $I(u_0) = c_1$. Then $u_0$ is a critical point of $I$.
\end{Lem}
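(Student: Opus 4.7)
The plan is to adapt the argument of \cref{lem:minimizers-are-critical} to the positive-mass setting, with \eqref{F5} replaced by the monotonicity \eqref{eq-mono-int} and with the representation \eqref{eq:I-on-M1} of $I$ on $\cM_1$ playing the role of the identity $I(u) = \tfrac{1}{N}\int_{\R^N} f(u)u\,dx$ used in the zero-mass proof. Since \eqref{def-cri} is trivial when $v \in X_1 \setminus D(\Psi)$, it suffices to prove it for $v \in D(\Psi)$. Fix such a $v$, set $v_t := (1-t)u_0 + tv \in D(\Psi)$ for $t \in (0,1)$, and let $(t_n) \subset (0,1)$ satisfy $t_n \to 0$.

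First I would combine \cref{Lem-conti-Psi} with the $\cC^1$-regularity of $\Theta$ from \cref{Lem-Theta-pos} to obtain a small neighborhood of $u_0$ in $D(\Psi)$ on which $\Psi$ is bounded, the projection $\theta(\cdot) = \Theta(\Psi(\cdot),\cdot)$ is continuous with $\theta(u_0) = 1$, and $d^2 I(u_\theta)/d\theta^2$ is uniformly bounded for $\theta$ near $1$. Setting $\theta_n := \theta(v_{t_n})$ and $w_n := (v_{t_n})_{\theta_n} \in \cM_1$, we then get $\theta_n \to 1$ and the quadratic estimate $|I(w_n) - I(v_{t_n})| \leq (L_1/2)|\theta_n - 1|^2$.

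Next I would split into cases based on the sign of $\Psi(v_{t_n}) - \Psi(u_0)$, as in the zero-mass proof. In the regular case—either $\Psi(u_0) \leq \Psi(v_{t_n})$ along a subsequence, or $\Psi(u_0) > \Psi(v_{t_n})$ with $|\theta_n - 1|/t_n$ bounded—the convexity inequality $\Psi(v_{t_n}) - \Psi(u_0) \leq t_n(\Psi(v) - \Psi(u_0))$ combined with the $\cC^1$-regularity of $\Theta$ yields $|\theta_n - 1| = O(t_n)$, so the quadratic error is $O(t_n^2)$. Using $\Phi \in \cC^1(X_1;\R)$, $I(u_0) = c_1 \leq I(w_n)$, and the expansion of $\Phi$ at $u_0$, I obtain
\[
0 \leq I(w_n) - I(u_0) \leq t_n \bigl\{ \Psi(v) - \Psi(u_0) - \Phi'(u_0)(v-u_0) \bigr\} + o(t_n),
\]
and dividing by $t_n$ and sending $n \to \infty$ produces \eqref{def-cri}.

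The main obstacle is the remaining case $\Psi(v_{t_n}) < \Psi(u_0)$ with $|\theta_n - 1|/t_n \to \infty$, which must be excluded by contradiction. Writing $\cH(u,\theta)$ for the functional introduced in the proof of \cref{Lem-Theta-pos}, I would first show $\theta_n \in (0,1)$ eventually: if $\theta_{n_k} \geq 1$ along a subsequence, then $\|v_{t_{n_k}} - u_0\|_{X_1} = O(t_{n_k})$ and the $\cC^1$-dependence of $\cH$ on $u$ give
\[
\Psi(u_0) > \Psi(v_{t_{n_k}}) = \cH(v_{t_{n_k}}, \theta_{n_k}) = \cH(u_0, \theta_{n_k}) + O(t_{n_k}),
\]
so $\cH(u_0, \theta_{n_k}) - \cH(u_0,1) \leq O(t_{n_k})$, and the strict positivity $\partial_\theta \cH(u_0,1) > 0$ from \eqref{eq-diff-cH} together with a Taylor expansion in $\theta$ forces $\theta_{n_k} - 1 \leq O(t_{n_k})$, contradicting $(\theta_{n_k}-1)/t_{n_k} \to \infty$. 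With $\theta_n \in (0,1)$, applying \eqref{eq:I-on-M1} to $w_n \in \cM_1$ and exploiting \eqref{eq-mono-int} gives
\[
c_1 = I(u_0) \leq I(w_n) \leq \frac{2\theta_n^N}{N+2} \biggl\{ \Psi(v_{t_n}) + \int_{\R^N} \tfrac{1}{2}g(v_{t_n})v_{t_n} - G(v_{t_n})\,dx \biggr\} =: \theta_n^N B_n.
\]
Since $\Psi(v_{t_n}) - \Psi(u_0) < 0$ here and the functional $u \mapsto \int_{\R^N} \tfrac{1}{2}g(u)u - G(u)\,dx$ is $\cC^1$ on $X_1$, one obtains $B_n - I(u_0) \leq O(t_n)$; rearranging $I(u_0) \leq \theta_n^N B_n$ into $(1 - \theta_n^N) c_1 \leq \theta_n^N(B_n - I(u_0)) = O(t_n)$ and invoking $c_1 > 0$ yields $(1 - \theta_n)/t_n$ bounded, the desired contradiction.
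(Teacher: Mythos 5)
Your proposal is correct, and its overall architecture is the paper's: adapt the zero-mass proof of \cref{lem:minimizers-are-critical}, handle the regular cases via the convexity sandwich for $\Psi$, the $\cC^1$ regularity of $\Theta$ from \cref{Lem-Theta-pos} and the uniform second-derivative bound giving $|I(w_n)-I(v_{t_n})|\lesssim|\theta_n-1|^2$, and rule out $\theta_n\geq 1$ under $\Psi(v_{t_n})<\Psi(u_0)$ by a mean-value argument based on $\partial_\theta\cH(u_0,1)>0$, which is exactly how the paper uses \eqref{eq-diff-cH}. The one point where you genuinely diverge is the exclusion of the remaining case $\theta_n<1$ with $(1-\theta_n)/t_n\to\infty$: the paper keeps the zero-mass scheme literally, writing $I(w_n)=\frac{\theta_n^N}{N}\int_{\R^N} f(\theta_n v_{t_n})\,\theta_n v_{t_n}\,dx$ and invoking the Nehari-type \cref{lem:nehari-type-inequality} (proved precisely as a substitute for \eqref{F5}) to compare with $\int_{\R^N} f(v_{t_n})v_{t_n}\,dx\to Nc_1>0$, whereas you use the representation \eqref{eq:I-on-M1} of $I$ on $\cM_1$ together with the monotonicity \eqref{eq-mono-int} of $\theta\mapsto\int_{\R^N}\tfrac12 g(\theta u)\theta u-G(\theta u)\,dx$, so \cref{lem:nehari-type-inequality} is never needed. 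Both routes land on the same contradiction $(1-\theta_n^N)c_1\leq O(t_n)$; yours reuses exactly the quantities that drive the compactness argument in the proof of \cref{th:3} and makes transparent why the sign condition $\Psi(v_{t_n})<\Psi(u_0)$ is what yields your bound $B_n-I(u_0)\leq O(t_n)$ (the required local Lipschitz property of $u\mapsto\int_{\R^N}\tfrac12 g(u)u-G(u)\,dx$ on $X_1$ is of the same standard nature as the regularity of $u\mapsto\int_{\R^N}f(u)u\,dx$ that the paper itself uses), while the paper's route keeps the two mass cases parallel line by line at the cost of the extra lemma.
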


\begin{proof}
Set 
\[
H(s) := F(s) + \frac{1}{N} f(s) s = - \left( \frac{1}{2} + \frac{1}{N} \right) s^2 + G(s) + \frac{1}{N} g(s) s. 
\]
Then the proof of \cref{lem:minimizers-are-critical} works until \eqref{eq-theta_n-1-subl}. 
After \eqref{eq-theta_n-1-subl}, we need to prove $\theta_n < 1$ under $\Psi(v_{n}) < \Psi(u_0)$. 
Recall the notation $\cH$ in the proof of \cref{Lem-Theta-pos}. 
Since $u_0 \in \cM_1$ gives 
\[
\cH(u_0,1) = \int_{\R^N} H(u_0) \, dx = \Psi(u_0) > 0, 
\]
\eqref{eq-diff-cH} and \eqref{G2} lead to 
\[
\int_{\R^N} H'(u_0) u_0 \, dx = \frac{\partial \cH}{\partial \theta} (u_0,1) > \frac{1}{N} \int_{\R^N} g'(u_0) u_0^2 - g(u_0)u_0 \,dx \geq 0. 
\]
Hence, the argument for proving $\theta_n < 1$ also works in the positive mass case.

Finally, if $\theta_n < 1$ and $w_n = \theta_n v_{t_n} (x / \theta_n)$, thanks to $0< c_1 = I(u_0)$ and  \cref{lem:nehari-type-inequality},
\[
\begin{aligned}
	I(u_0)\leq I(w_n) = \frac{\theta_n^N}{N} \int_{\R^N} f(\theta_n v_{t_n}) \theta_n v_{t_n} \, dx 
	&\leq \frac{\theta_n^N}{N} \int_{\R^N} f(v_{t_n}) v_{t_n} \, dx
	\\
	&= \frac{1}{N}\int_{\R^N} f(v_{t_n}) v_{t_n} \, dx 
	- \frac{( 1 - \theta_n^N)}{N} \int_{\R^N} f(v_{t_n}) v_{t_n} \, dx. 
\end{aligned}
\]
By $0 < c_1 = I(u_0) = \int_{\R^N} f(u_0) u_0 \, dx / N$, 
we can repeat the same reasoning as for the zero mass case in \cref{lem:minimizers-are-critical} to show that every minimizer of $I$ on $\cM_1$ is a critical point.
\end{proof}

\subsection{Proof of \cref{th:3}}

\begin{proof}[Proof of \cref{th:3}]
Remark that as in the proof of \cref{Lem:c1-c1MP}, for a minimzing sequence $(u_n)_{n \in \N} \subset \cM_1$, by $M_1(u_n) = 0$, we obtain 
\[
NI(u_n) = \Phi'(u_n)u_n = \int_{\R^N} f(u_n)u_n \, dx \geq 2 \int_{\R^N} F(u_n) \, dx =  2 \left( -I(u_n) + \Psi(u_n) \right).  
\]
Hence, $(\Psi(u_n))_{n \in \N}$, $(\Phi'(u_n)u_n)_{n \in \N}$ and $(\Phi(u_n))_{n \in \N}$ are bounded according to $I(u_n) = c_1 + o(1)$. 
When $N \geq 3$, by $X_1 \subset X_0 \subset L^{2^*} (\R^N) \cap L^\infty(\R^N)$, $(u_n)_{n \in \N}$ is bounded in $L^{2^*} (\R^N) \cap L^\infty(\R^N)$. 
From \eqref{G1}, we may find $C>0$, which depends on $\sup_n \| u_n \|_\infty$ such that 
\[
G(u_n(x)) \leq \frac{1}{4} u_n^2(x) + C |u_n(x)|^{2^*} \quad \text{for all $n$ and $x \in \R^N$}.
\]
Hence, 
\[
c_1 + o(1) = I(u_n) \geq \frac{1}{2} \| u_n \|_2^2 - \int_{\R^N} G(u_n) \, dx 
\geq \frac{1}{4} \| u_n \|_2^2 - C \| u_n \|_{2^*}^{2^*} 
\geq \frac{1}{4} \| u_n \|_2^2 - C \| \nabla u_n \|_{2}^{2^*}.
\]
By $\| \nabla u_n \|_2^2/ 2 + b_{2j_0} \| \nabla u_n \|_{2j_0}^{2j_0} \leq \Psi(u_n)$, $(u_n)_{n \in \N}$ is bounded in $L^2(\R^N)$, hence in $X_1$.

On the other hand, when $N =1,2$, we claim that \eqref{G2} and \eqref{G3} imply that 
\begin{equation}\label{eq:g2Ginfi}
\lim_{|s| \to \infty} \left( g(s) s - 2G(s) \right) = \infty.
\end{equation}
In fact, by \eqref{G3},  we may choose $(s_n)_{n \in \N}$ so that 
\[
s_n < s_{n+1}, \quad s_n \to \infty, \quad \frac{g(s_n)}{s_n} + 1 \leq \frac{g(s)}{s} \quad \text{for $s \geq s_{n+1}$}.
\]
Hence, if $s \geq s_{n+1}$, then 
\[
\begin{aligned}
	2G(s) &= 2 \int_{0}^{s_n} \frac{g(\tau)}{\tau} \tau \, d \tau + 2 \int_{s_n}^{s} \frac{g(\tau)}{\tau} \tau \, d \tau
	\\
	&\leq 2 \frac{g(s_n)}{s_n} \int_0^{s_n} \tau \, d \tau + 2 \frac{g(s)}{s} \int_{s_n}^{s} \tau \, d \tau 
	\\
	&= \left( \frac{g(s_n)}{s_n} - \frac{g(s)}{s} \right) s_n^2 +  g(s) s. 
\end{aligned}
\]
The choice of $(s_n)_{n \in \N}$ and $s \geq s_{n+1}$ lead to 
\[
g(s) s - 2G(s) \geq \left( \frac{g(s)}{s} - \frac{g(s_n)}{s_n} \right) s_n^2 \geq s_n^2,
\]
which implies $g(s) s - 2G(s) \to \infty$ as $s \to \infty$. 
Similarly, we may verify $g(s) s - 2G(s) \to \infty$ as $s \to - \infty$ and 
\eqref{eq:g2Ginfi} holds.

Recalling that 
\[
\int_{\R^N} g(u_n)u_n - 2 G(u_n) \, dx = \Phi'(u_n) u_n - 2\Phi(u_n) 
\]
is bounded and that each $u_n$ is $1$-Lipschitz in view of $u_n \in D(\Psi)$, we observe from \eqref{eq-2Gg} and \eqref{eq:g2Ginfi} that 
$(u_n)_{n \in \N}$ is bounded in $L^\infty(\R^N)$. Furthermore, by \eqref{eq-2Gg} and \eqref{eq:g2Ginfi}, for each $t>0$ there exists $\delta_t>0$ such that 
$g(s)s - 2G(s) \geq \delta_t$ if $|s| \geq t$. Therefore, $(|\{ |u_n| \geq t \}|)_{n \in \N}$ is also bounded where $|A|$ denotes 
the $N$-dimensional Lebesgue measure of $A \subset \R^N$. 
By \eqref{G1}, choose $t_0 > 0$ so that $|G(t)| \leq t^2/4$ for each $|t| \leq t_0$. Then 
\[
\begin{aligned}
c_1 + o(1) = I(u_n) 
&\geq \frac{1}{2} \| u_n \|_2^2 - \int_{\R^N} G(u_n) \, dx
\\
&\geq \frac{1}{4} \| u_n \|_2^2 - \int_{ |u_n| \geq t_0 } G(u_n) \, dx
\\
&\geq \frac{1}{4} \| u_n \|_2^2 - \left( \max_{ |t| \leq \sup_{n} \| u_n \|_\infty } G(t) \right) \left| \left\{ |u_n| \geq t_0 \right\} \right| 
\end{aligned}
\]
and $(u_n)_{n \in \N}$ is bounded in $L^2(\R^N)$. Thus, when $N=1,2$, $(u_n)_{n \in \N}$ is bounded in $X_1$.

Now we use \cref{Prop:decomp} with $a=(N+2)/(2N)$. Then we obtain $K \in \{0,1,2,\dots\} \cup \{\infty\}$, $(y_n^j)_n$, $\wt u_j$ for $0 \leq j < K+1$ such that 
\begin{equation}\label{eq-dec-pos}
	\begin{aligned}
		&u_n(\cdot + y_n^j) \weakto \widetilde{u}_j, 
		\quad u_n(x + y_n^j) \weakto \widetilde{u}_j(x) \quad \text{a.e. $\R^N$}, \\
		&\lim_{n\to\infty} \left( \Psi(u_n) + a \| u_n \|_2^2 \right) \geq \sum_{j=0}^K \left( \Psi(\widetilde{u}_j) + a \| \wt u_j \|_2^2 \right)  , \\
		&\lim_{n \to \infty} \int_{\R^N} G(u_n) \, dx = \sum_{j=0}^K \int_{\R^N} G(\widetilde{u}_j) \, dx, \\
		&\lim_{n \to \infty} \int_{\R^N} g(u_n)u_n \, dx = \sum_{j=0}^K \int_{\R^N} g(\widetilde{u}_j) \widetilde{u}_j \, dx.
	\end{aligned}
\end{equation}
Since $M_1(u_n) = 0$, \eqref{eq-dec-pos} with $a=(N+2)/ (2N)$ yields 
\begin{equation}\label{ineq-M1}
0 = \lim_{n \to \infty} M_1(u_n) 
\geq \sum_{j=0}^K M_1(\wt u_j). 
\end{equation}
When $K=0$ and $\wt u_0 = 0$, from $ \int_{\R^N} G(u_n) \, dx = o(1) = \int_{\R^N} g(u_n) u_n \, dx$ and \eqref{ineq-M1}, it follows that 
$\Psi(u_n) + (N+2)\| u_n \|_2^2/(2N) \to 0$, however, this gives the following contradiction $ 0 < c_1 = \lim_{n \to \infty} I(u_n) = 0$. 
Hence, either $K \geq 1$ or $K=0$ with $\wt u_0 \neq 0$.

As in the proof of \cref{th:1}, set $J := \left\{ j \ : \ \wt u_j \neq 0 \right\}$. 
Then $J \neq \emptyset$ and \eqref{ineq-M1} becomes 
\begin{equation}\label{ineq-M_1J}
0 \geq \sum_{j \in J} M_1 ( \wt u_j ). 
\end{equation}
Therefore, there exists $j_0 \in J$ such that $M_1(\wt u_{j_0}) \leq 0$. By $\wt u_{j_0} \neq 0$, we claim $\theta_{j_0} = \Theta( \Psi( \wt u_{j_0} ) , \wt u_{j_0}  ) \leq 1$. 
Indeed, by recalling $H$ and $\cH$ in the proofs of \cref{Lem-Theta-pos,lem:positive-mass-minimizers-crit}, 
$\wt u_{j_0} \neq 0$ and $M_1(\wt u_{j_0}) \leq 0$, we see that 
\[
0 < \Psi(\wt u_{j_0}) \leq \int_{\R^N} H( \wt u_{j_0} ) \, dx = \cH( \wt u_{j_0} , 1  ).
\]
Therefore, \eqref{eq-diff-cH} leads to 
\[
\frac{d}{d\theta} \int_{\R^N} H(\theta \wt u_{j_0}) \, dx \big|_{\theta = 1} = \frac{\partial}{\partial \theta } \cH( \wt u_{j_0} ,\theta) \big|_{\theta=1} 
> \frac{1}{N} \int_{\R^N} g'(u) u^2 - g(u) u \, dx \geq 0.
\]
Using \eqref{eq-diff-cH} again, we observe that the map $[1,\infty) \ni \theta \mapsto \int_{\R^N} H(\theta \wt u_{j_0}) \, dx$ is strictly increasing. 
Since $\Theta (\Psi (\wt u_{j_0}) , \wt u_{j_0} ) > 0$ is the unique solution to 
\[
\Psi( \wt u_{j_0} ) = \int_{\R^N} H( \theta \wt u_{j_0}) \, dx, 
\]
we get $\theta_{j_0} \leq 1$. Thus by writing $h(s) := g(s)s/2 - G(s) \geq 0$, from 
$(\wt u_{j_0})_{\theta_{j_0}} \in \cM_1$, \eqref{eq:I-on-M1}, \eqref{eq-mono-int}, the weak lower semicontinuity of $\Psi$ and 
Fatou's lemma, it follows that 
\begin{equation*}
	\begin{aligned}
	c_1 \leq I( ( \wt u_{j_0} )_{\theta_{j_0}} ) 
	&= \frac{2 \theta_{j_0}^N }{N+2} 
	\left\{ \Psi(  \wt u_{j_0} ) + \int_{\R^N} h( \theta_{j_0} \wt u_{j_0} ) \, dx \right\}
	\\
	&\leq \frac{2}{N+2} \left\{ \Psi( \wt u_{j_0} ) + \int_{\R^N} h( \wt u_{j_0} ) \, dx \right\}
	\\
	&\leq \frac{2}{N+2} \liminf_{n \to \infty} 
	\left\{ \Psi( u_n (\cdot + y^{j_0}_n) ) + \int_{\R^N} h(u_n (\cdot + y^{j_0}_n)) \, dx \right\}
	\\
	&= \liminf_{n \to \infty} I(u_n) = c_1. 
	\end{aligned}
\end{equation*}
Therefore, $\theta_{j_0} = 1$ and $\wt u_{j_0} \in \cM_1$ is a minimizer. 
Furthermore, by \cref{lem:positive-mass-minimizers-crit}, $\wt u_{j_0}$ is a critical point of $I$.

We next prove $\| u_n( \cdot + y^{j_0}_n) - \wt u_{j_0} \|_{X_1} \to 0$. 
To this end, we first show $\sharp J = 1$. If $\sharp J \geq 2$, then since $M_1(\wt u_{j_0}) = 0$, 
\eqref{ineq-M_1J} becomes 
\[
0 \geq \sum_{ j \in J \setminus \{j_0\}} M_1( \wt u_j ).
\]
Thus, there exists $j_1 \in J \setminus \{j_0\}$ such that $M_1(\wt u_{j_1}) \leq 0$ and we can repeat the above argument to obtain 
\[
\wt u_{j_1} \in \cM_1, \quad c_1 = I ( \wt u_{j_1} ) = \frac{2}{N+2} \left( \Psi( \wt u_{j_1} ) + \int_{\R^N} h( \wt u_{j_1} ) \, dx \right). 
\]
Choose $R_0 \gg1$ so large that 
\[
\frac{c_1}{2} < \frac{2}{N+2} \int_{ B(0,R_0) } 1 -  \sqrt{1 - |\nabla \wt u_{j_k}|^2}  + h( \wt u_{j_k} ) \, dx =: K_{ B(0,R_0)} ( \wt u_{j_k} )  \quad (k=0,1). 
\]
By recalling $| y^{j_0}_n - y^{j_1}_n | \to \infty$ and $h(s) \geq 0$ for each $s \in \R$, 
the weak lower semicontinuity of $\Psi_{B(y,r)}$ and Fatou's lemma give 
\[
\begin{aligned}
	c_1 &< \sum_{k=0}^1 K_{ B( 0 , R_0 ) } \left( \wt u_{j_k}  \right) 
	\\
	&\leq \liminf_{n \to \infty} \sum_{k=0}^1 K_{ B( y^{j_k}_n , R_0 ) } \left( u_n  \right) 
	\\
	&\leq \liminf_{n \to \infty} \frac{2}{N+2} \int_{\R^N} 1 - \sqrt{1 - |\nabla u_n|^2}  + h(u_n) \, dx 
	= \liminf_{n \to \infty} I(u_n) = c_1, 
\end{aligned}
\]
which is a contradiction. Hence, $\sharp J = 1$. 
From \eqref{eq-dec-pos} with $a = (N+2)/(2N)$, \eqref{ineq-M1}, $\sharp J = 1$ and $M_1( \wt u_{j_0} ) = 0$, we infer that 
\[
\Psi(u_n (\cdot + y^{j_0}_n) ) + \frac{N+2}{2N} \| u_n ( \cdot + y^{j_0}_n ) \|_2^2 
\to \Psi( \wt u_{j_0} ) + \frac{N+2}{2N} \| \wt u_{j_0} \|_2^2. 
\]
Combining this with \cref{l:strconv}, we have 
$\| u_{n} (\cdot + y^{j_0}_n) - \wt u_{j_0} \|_{X_1} \to 0$.

We move to the proof that each minimizer has a constant sign and an argument is similar to the above. 
Let $u \in \cM_1$ be any minimizer of $c_1$ and suppose $u_+,u_- \not \equiv 0$. 
Since $0 = M_1(u) = M_1(u^+) + M_1(u^-)$, either $M_1(u_+) \leq 0$ or $M_1(u_-) \leq 0$. 
Suppose $M_1(u_+) \leq 0$. By $u_+ \not \equiv 0$, there exists $\theta_+ \leq 1$ such that $ (u_+)_{\theta_+} \in \cM_1$. 
From \eqref{eq-mono-int}, $h(s) = g(s)s/2 - G(s) \geq 0$, $\Psi(u^-) > 0$ and 
\[
\Psi(u) = \Psi(u_+) + \Psi(u_-), \quad \int_{\R^N} h(u)u \, dx = \int_{\R^N} h(u_+) u_+\,d x + \int_{\R^N} h(u_-)u_- \, dx,
\]
it follows that 
\[
\begin{aligned}
	c_1 \leq I((u_+)_{\theta_+}) 
	&= \frac{2 \theta_+^{N}}{N+2} \left\{ \Psi(u_+) + \int_{\R^N} h(\theta_+u_+) \, dx \right\}
	\\
	&\leq \frac{2 }{N+2} \left\{ \Psi(u_+) + \int_{\R^N} h(u_+) \, dx \right\}
	\\
	&< \frac{2}{N+2} \left\{ \Psi(u) + \int_{\R^N} h(u) \, dx \right\} = I(u) = c_1,
\end{aligned}
\]
which is a contradiction. 
A similar contradiction is obtained if $M_1(u^-) \leq 0$ holds. 
Therefore, either $u_+ \equiv 0$ or $u_- \equiv 0$ holds, 
and the strict sign property follows from the same reasoning as in the zero mass case. 

The existence of a radial minimizer follows as in the proof of \cref{th:1}. 
Indeed, let $u \in \cM_1$ be any minimizer and write $v := u^*$ when $u$ is positive, 
and $v := - u^*$ when $u$ is negative. Then we may verify $M_1(v) \leq 0$, 
by \cref{Lem-Theta-pos}, there exists $\theta \in (0,1]$ such that $v_{\theta} \in \cM_1$. 
Then we may infer from \cref{lem:nehari-type-inequality} and $I(w) = \int_{\R^N} f(w)w \, dx / N$ for any $w \in \cM_1$ that 
\[
0<c_1 \leq I( v_{\theta} ) = 
\frac{\theta^N}{N} \int_{\R^N} f(\theta v) \theta v \, dx 
\leq \frac{1}{N} \int_{\R^N} f( v) v \, dx 
= \frac{1}{N} \int_{\R^N} f(u) u \, dx = I(u) = c_1.
\]
Thus, $\theta = 1$ and $v \in \cM_1$ is a radial minimizer. 
\end{proof}

\section{Nonradial solutions}
\label{sec:nonradial}

In this section, \cref{th:4} is proved. Throughout this section, we suppose $N \geq 4$ and 
fix any $k_1,k_2 \in \N$ so that $k_1,k_2 \geq 2$ and $N - k_1 - k_2 \geq 0$ and consider 
\[
\cO = \cO(k_1) \times \cO(k_2) \times \{\mathrm{id}\} \subset \cO(N). 
\]
By writing $x=(x_1,x_2,x_3) \in \R^{k_1} \times \R^{k_2} \times \R^{N-k_1-k_2}$ (when $N=k_1 +k_2$, we write $x=(x_1,x_2)$), 
recall $X_\cO$, $\tau$ and $X_\tau$: 
\[
X_\cO := \left\{ u \in X \ : \ u(\rho x) = u (x) \quad \text{for all $x \in \R^N$ and $\rho \in \cO$} \right\}
\]
and 
\[
(\tau u) (|x_1|,|x_2|,x_3) := - u( |x_2|, |x_1|, x_3 ), \quad 
X_\tau := \left\{ u \in X_\cO \ : \  \tau u = u \right\}, 
\]
where $X = X_0$ in the zero mass case and $X=X_1$ in the positive mass case. 
Since $f$ (resp. $g$) is odd, $\Psi(-u) = \Psi(u)$ and $\Phi(-u) = \Phi(u)$. 
According to the principle of symmetric criticality (see \cite{KoOt04,BIMM}), 
any critical point of $I$ in $X_\tau$ is also a critical point in $X_\cO$, 
and by applying the principle again, it is a critical point in $X$. 
Thus, it suffices to find a critical point of $I|_{X_\tau}$.

We first prove that for the profile decomposition of $(u_n)_{n \in \N}$ in $X_\cO$, 
we may assume $y_n^j \in \{0\} \times \{0\} \times \R^{N-k_1-k_2}$:

\begin{Lem}\label{lem:lionsNR}
	Let $(u_n)_{n \in \N} \subset X_\cO$ be bounded and $K \in \{0\} \cup \N \cup \{\infty\}$, $(y_{n}^j)_{n \in \N} \subset \R^N$, $\wt u_j \in X$ for $ 0 \leq j < K+1$ 
	be as in \cref{Prop:decomp}. Then for each $j$, $\wt u_j \in X_\cO$ and 
	\[
	\limsup_{n \to \infty} \left\{ | y_{n,1}^j | + |y_{n,2}^j |  \right\} < \infty.
	\]
	In particular, we may suppose $y_{n,1}^j = 0$ and $y_{n,2}^j = 0$ for any $1 \leq j < K +1$ and $n$. 
%
\end{Lem}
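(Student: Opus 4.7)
The plan is to proceed in two steps. First, I will use the $\cO$-invariance of $(u_n)_{n \in \N}$ combined with the rotational freedom inside $\cO(k_1)$ and $\cO(k_2)$ to obtain a uniform bound on $|y_{n,1}^j|$ and $|y_{n,2}^j|$ for each $j \geq 1$, via an iterated Brezis--Lieb argument. Second, I will absorb these bounded components into the profile itself to obtain both the normalization $y_{n,1}^j = y_{n,2}^j = 0$ and the $\cO$-invariance of each $\wt u_j$.

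For step one, I fix $j \geq 1$ with $\wt u_j \neq 0$ and argue by contradiction, supposing $|y_{n,1}^j| \to \infty$ along a subsequence (the case $|y_{n,2}^j| \to \infty$ being symmetric, using $k_2 \geq 2$). The $\cO$-invariance of $u_n$ yields, for every $\sigma \in \cO$, the identity $u_n(\cdot + \sigma y_n^j) = u_n(\sigma^{-1} \cdot + y_n^j)$; since composition with a fixed rotation is weakly continuous, the given convergence $u_n(\cdot + y_n^j) \weakto \wt u_j$ upgrades to
\[
u_n(\cdot + \sigma y_n^j) \weakto \wt u_j(\sigma^{-1} \cdot) \quad \text{weakly in } X.
\]
Since $k_1 \geq 2$, after further subsequence so that $y_{n,1}^j / |y_{n,1}^j| \to e \in S^{k_1-1}$, for any prescribed $M \in \N$ I can pick $\rho_1, \ldots, \rho_M \in \cO(k_1)$ with $\rho_i e$ pairwise distinct. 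Setting $\sigma_i := (\rho_i, \mathrm{id}, \mathrm{id})$, the translations satisfy
\[
|\sigma_i y_n^j - \sigma_l y_n^j| = |y_{n,1}^j| \, |\rho_i e - \rho_l e| + o(|y_{n,1}^j|) \to \infty \quad (i \neq l),
\]
so the $M$ sequences $(u_n(\cdot + \sigma_i y_n^j))_n$ are pairwise asymptotically orthogonal and each converges weakly (and a.e., up to subsequence) to a nontrivial $v_i := \wt u_j(\sigma_i^{-1} \cdot)$ with $\|v_i\|_q = \|\wt u_j\|_q > 0$ for any fixed $q \in (p_0, p_0^*)$. Iterating the classical Brezis--Lieb lemma $M$ times along these orthogonal translations, exactly as in the extraction step in the proof of \cref{Prop:decomp}, yields
\[
M \, \|\wt u_j\|_q^q = \sum_{i=1}^M \|v_i\|_q^q \leq \liminf_{n\to\infty} \|u_n\|_q^q < \infty,
\]
which is absurd once $M$ is taken sufficiently large.

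For step two, I now know $(y_{n,1}^j, y_{n,2}^j)$ is bounded, so up to a further subsequence $y_{n,1}^j \to a_j$ and $y_{n,2}^j \to b_j$. Replacing $y_n^j$ by $\wt y_n^j := (0, 0, y_{n,3}^j)$ and $\wt u_j$ by its translate $\wt u_j(\cdot + (a_j, b_j, 0))$ preserves every conclusion of \cref{Prop:decomp}: the pairwise divergence $|\wt y_n^{j_1} - \wt y_n^{j_2}| \to \infty$ is unaffected because only the first two coordinates were shifted, and by bounded amounts. Since $\cO$ acts trivially on the last $N - k_1 - k_2$ coordinates, the translation $T_{\wt y_n^j}$ maps $X_\cO$ into itself, so each $u_n(\cdot + \wt y_n^j)$ still lies in the convex, strongly closed---hence weakly closed, by Mazur's theorem---subspace $X_\cO$, and the weak limit $\wt u_j$ lies there as well. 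The case $j = 0$ is immediate since $y_n^0 = 0$ and $u_n \in X_\cO$.

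The main obstacle is step one: one must verify that the $M$ ``artificial'' profiles $v_1, \ldots, v_M$, which do not a priori appear in the original decomposition produced by \cref{Prop:decomp}, can be inserted into the iterative Brezis--Lieb scheme so as to yield the clean additivity $\sum_i \|v_i\|_q^q \leq \liminf_n \|u_n\|_q^q$ without over-counting bumps already indexed by the $(\wt u_l)_l$. The pairwise divergence of the $\sigma_i y_n^j$ is the crucial input and the argument is a direct adaptation of the extraction procedure in \cref{Prop:decomp}, but this point should be spelled out carefully to avoid circularity.
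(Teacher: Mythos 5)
Your proof is correct and follows essentially the same route as the paper: both exploit the $\cO(k_i)$-invariance of $u_n$ together with $k_1,k_2\geq 2$ to replicate the nontrivial bump $\wt u_j$ at arbitrarily many pairwise divergent locations, contradicting the uniform $L^{q}$ (resp.\ $L^{p_0}$) bound, and then dispose of the bounded components $y_{n,1}^j, y_{n,2}^j$ by translating the profiles and invoking the weak closedness of $X_\cO$. The only difference is technical: the paper sidesteps the iterated Brezis--Lieb additivity you flag as delicate by directly placing $m_i\bigl(k_i,|y_{n,i}^j|\bigr)$ disjoint balls of radius $r_j$ on the orbit sphere and using the invariance of $u_n$ to get $m_i \| u_n \|_{L^{p_0}(B_{r_j}(y_n^j))}^{p_0} \leq \| u_n \|_{p_0}^{p_0}$, so the over-counting issue you worry about never arises.
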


\begin{proof}
The assertion $\wt u_j \in X_\cO$ follows from the closedness of $X_{\cO}$ in $X$. 
The latter assertion may be proved as in \cite{Wi96}. 
Let $j \in \N$. 
By $u_n( \cdot + y_n^j ) \rightharpoonup \wt u_j \not \equiv 0$ weakly in $X$ and $u_n (\cdot + y_n^j) \to \wt u_j$ in $L^{p_0}_{\rm loc} (\R^N)$, 
there exist $r_j>0$ and $N_j \in \N$ such that for all $n \geq N_j$, 
$\| u_n \|_{L^{p_0} (B_{r_j}(y_n^j)) } \geq \| \wt u_j \|_{L^{p_0} (B_{r_j}(0))}/ 2 > 0$. 
Since $k_1,k_2 \geq 2$, one may find $m_1 = m_1( k_1, |y_{n,1}^j| )$ (resp. $m_2 = m_2(k_2,|y_{n,2}^j|)$) disjoint balls $B_1,\dots, B_{m_1}$ (resp. $B_1,\dots, B_{m_2}$) 
with radius $r_j$ and centered on $\partial B_{|y_{n,1}^j|} (0) \cap \R^{k_1} \times \{0\} \times \{0\}$ 
(resp. $\partial B_{|y_{n,1}^j|} (0) \cap \{0\} \times \R^{k_2} \times \{0\}$), 
where $m(k_i,r) \to \infty$ as $r \to \infty$ for $i=1,2$. 
Since $(u_n)_{n \in \N}$ is bounded in $X$ and $u_n \in X_{\cO}$, for each $n \geq N_j$ and $i=1,2$,  
\[
\infty > \sup_{k \in \N} \| u_k \|_{p_0}^{p_0} \geq \sum_{\ell=1}^{m_i} \| u_n \|_{L^{p_0} (B_\ell) }^{p_0} 
= m_i \| u_n \|_{L^{p_0} (B_{r_j} (0)) } \geq \frac{m_i}{2} \| \wt u_j \|_{L^{p_0} (B_{r_j}(0)) } > 0.
\]
Since $m_i = m_i(k_i,r) \to \infty$ as $r \to \infty$, $(y^j_{n,i})_{n \in \N}$ $(i=1,2)$ are bounded (the bound depends on $ \| \wt u_j \|_{L^{p_0} (B_{r_j} (0))} $ ) 
and we complete the proof. 
\end{proof}

\begin{proof}[Proof of \cref{th:4}]
We first prove the existence of minimizer corresponding to $\inf_{\cM_\tau} I$, 
where $\cM_\tau := \cM \cap X_\tau$.  
Let $(u_n)_{n \in \N} \subset \cM_{\tau}$ satisfy $I(u_n) \to \inf_{\cM_{\tau}} I \in [ \inf_{\cM} I , \infty )$. 
As in the proofs of \cref{th:1,th:3}, $(u_n)_{n \in \N}$ is bounded in $X$. 
By \cref{Prop:decomp} and \cref{lem:lionsNR}, 
there exist $K \in \{0\} \cup \N \cup \{\infty\}$, $(y^j_n)_{n \in \N} \subset \{0\} \times \{0\} \times \R^{N-k_1-k_2}$ and $\wt u_j$ such that
\begin{equation*}
	\begin{aligned}
		&u_n(\cdot + y_n^j) \rightharpoonup \wt u_j, \quad u_n(x+y_n^j) \to \wt u_j(x) \quad \text{a.e. $\R^N$},
		\\
		& \lim_{n \to \infty} \Psi(u_n) \geq \sum_{j=0}^K \Psi_0(\wt u_j) \quad 
		\left( \text{resp.} \ \lim_{n \to \infty} \left\{ \Psi(u_n) + a \left\| u_n \right\|^2_2 \right\} \geq \sum_{j=0}^K \left\{ \Psi( \wt u_j) + a \| \wt u_j \|_2^2 \right\} \right) 
		\\
		& \lim_{n \to \infty} \int_{\R^N} F(u_n) \, dx = \sum_{j=0}^K \int_{\R^N} F(\wt u_j) \, dx 
		\quad \left( \text{resp.} \ \lim_{n \to \infty} \int_{\R^N} G(u_n) \, dx = \sum_{j=0}^K \int_{\R^N} G(\wt u_j) \, dx \right),
		\\
		& \lim_{n \to \infty} \int_{\R^N} f(u_n)u_n \, dx = \sum_{j=0}^K \int_{\R^N} f(\wt u_j) \wt u_j \, dx 
		\quad \left( \text{resp.} \ \lim_{n \to \infty} \int_{\R^N} g(u_n) u_n \, dx = \sum_{j=0}^K \int_{\R^N} g(\wt u_j) \wt u_j \, dx \right).
	\end{aligned}
\end{equation*}
Then as in the proof of \cref{th:1,th:3}, we may prove that $\inf_{\cM_{\tau}} I $ is attained.

Let $u \in \cM_{\tau}$ satisfy $I(u) = \inf_{\cM_{\tau}} I$. 
Then we may use the argument in the proof of \cref{lem:minimizers-are-critical,lem:positive-mass-minimizers-crit} 
to verify that $u$ is a critical point of $I$ on $X_{\tau}$. 
As pointed in the beginning of this section, $u$ is a critical point of $I$ in $X$, and hence 
\cref{l:RegCritP} implies that $u$ is a classical solution of \eqref{eq:BI}.

When $k_1 = k_2$, we next prove $I(u) \geq 2\inf_{\cM} I$ where $u \in \cM_\tau$ is a minimizer corresponding to $\inf_{\cM_{\tau}} I$. 
Set 
\[
\begin{aligned}
	\Omega_1&:=\{x = (x_1,x_2,x_3) \in\R^N \ : \  |x_1|>|x_2|\},\\
	\Omega_2&:=\{x=(x_1,x_2,x_3) \in\R^N \ : \  |x_1|<|x_2|\}.
\end{aligned}
\]
In this case, $\tau$ is identified with $\tau(x_1,x_2,x_3) = (x_2,x_1,x_3)$ 
and notice that $\tau \Omega_1 = \Omega_2$ and $| \R^N \setminus (\Omega_1 \cup \Omega_2) | = 0$. 
Since $u\in X_\tau \setminus \{0\}$ and $u = 0$ on $\partial \Omega_i$ $(i=1,2)$, 
we get $\bm{1}_{\Omega_i} u\in X$ ($i=1,2$) and $0 = M(u) = 2M( \bm{1}_{\Omega_i} u )$. 
Thus, $\bm{1}_{\Omega_i} u \in \cM$ and 
\[
I(u)=I(\bm{1}_{\Omega_1} u)+I(\bm{1}_{\Omega_2} u)=2I(\bm{1}_{\Omega_1} u)\geq 2 \inf_{\cM} I.
\]

Finally, we show $I(u) > 2 \inf_\cM I$. From $\bm{1}_{\Omega_1} u \in \cM$, 
if $I(\bm{1}_{\Omega_1} u) = \inf_{\cM} I$, then by \cref{th:1,th:3}, either $\bm{1}_{\Omega_1} u > 0$ or else $\bm{1}_{\Omega_1} u < 0$. 
However, this is a contradiction. Thus, $I(\bm{1}_{\Omega_1} u) > \inf_{\cM} I$ and $I(u) = 2 I(\bm{1}_{\Omega_1} u) > 2 \inf_{\cM} I$ holds. 
\end{proof}

\section*{Acknowledgements}

B. Bieganowski was partly supported by the National Science Centre, Poland (grant no. 2023/51/B/ST1/00968). J. Mederski was partly supported by the National Science Centre, Poland (grant no. 2025/57/B/ST1/04804). 
This work was supported by JSPS KAKENHI Grant Number JP24K06802. 
This work was initiated and partially done when N. Ikoma visited Warsaw and he would like express his gratitude 
to University of Warsaw and Institute of Mathematics of the Polish Academy of Sciences. 

%

\end{document}